\renewcommand*{\HyperDestNameFilter}[1]{\jobname-#1} 
\numberwithin{equation}{section}
\newcommand\blfootnote[1]{%
\begingroup
\renewcommand\thefootnote{}\footnote{#1}%
\addtocounter{footnote}{-1}%
\endgroup
}
  \newcommand{\Addresses}{{
  \bigskip
  \footnotesize

\textsc{I.H.E.S., Universit\'e Paris-Saclay, CNRS, Laboratoire Alexandre
  Grothendieck. 35 Route de Chartres, 91440 Bures-sur-Yvette
  (France)}\par\nopagebreak
  \textit{E-mail address}, G.~Baldi: \texttt{baldi@ihes.fr}  

  \medskip

  \textsc{Humboldt Universit\"{a}t zu Berlin (Germany)}
  \par\nopagebreak
  \textit{E-mail address}, B.~Klingler: \texttt{bruno.klingler@hu-berlin.de}
  
  \medskip
  
\textsc{I.H.E.S., Universit\'e Paris-Saclay, CNRS, Laboratoire Alexandre Grothendieck. 35 Route de Chartres, 91440 Bures-sur-Yvette (France)}\par\nopagebreak
  \textit{E-mail address}, E.~Ullmo: \texttt{ullmo@ihes.fr}
}}
\theoremstyle{plain}
\newtheorem{theor}{Theorem}[section]
\newtheorem{conj}[theor]{Conjecture}
\newtheorem{lem}[theor]{Lemma}
\newtheorem{defi}[theor]{Definition}
\newtheorem{nota}[theor]{Notation}
\newtheorem{prop}[theor]{Proposition}
\newtheorem{cor}[theor]{Corollary}
\theoremstyle{definition}
\newtheorem{example}[theor]{Example}
\newtheorem{rmk}[theor]{Remark}
\newtheorem{question}[theor]{Question}
\theoremstyle{remark}
\numberwithin{equation}{subsection}
\newcommand{\DT}{\mathbf{S}}
\newcommand{\RR}{\mathbb{R}}
\newcommand{\CC}{\mathbb{C}}
\newcommand{\NN}{\mathbb{N}}
\newcommand{\ZZ}{\mathbb{Z}}
\newcommand{\QQ}{\mathbb{Q}}
\newcommand{\HH}{{\mathbf H}}
\newcommand{\LL}{{\mathbf L}}
\newcommand{\ws}{\textnormal{ws}}
\newcommand{\cE}{\mathcal{E}}
\newcommand{\noi}{\noindent}
\newcommand{\alg}{\operatorname{alg}}
\newcommand{\an}{\textnormal{an}}
\newcommand{\Hcd }{\operatorname{H-cd}}
\newcommand{\hHcd }{\operatorname{hH-cd}}
\newcommand{\Mcd }{\operatorname{M-cd}}
\newcommand{\hMcd }{\operatorname{hM-cd}}
\newcommand{\PU}{\operatorname{PU}}
\newcommand{\SU}{\operatorname{SU}}
\newcommand{\SO}{\operatorname{SO}}
\newcommand{\MM}{\mathbf{M}}
\newcommand{\ad}{\textnormal{ad}}
\newcommand{\der}{\textnormal{der}}
\newcommand{\MT}{\mathbf{MT}}
\newcommand{\Ad}{\operatorname{Ad}}
\newcommand{\sing}{\textnormal{sing}}
\DeclareMathOperator{\Sh}{Sh}
\DeclareMathOperator{\Hom}{Hom}
\DeclareMathOperator{\Res}{Res}
\DeclareMathOperator{\Sl}{SL}
\newcommand{\Zar}{\textnormal{Zar}}
\newcommand{\Gm}{\mathbb{G}_{\textnormal{m}}}
\newcommand{\PP}{\mathbf{P}}
\newcommand{\codim}{\operatorname{codim}}
\newcommand{\VV}{{\mathbb V}}
\newcommand{\HL}{\textnormal{HL}}
\newcommand{\atyp}{\textnormal{atyp}}
\newcommand{\watyp}{\textnormal{w-atyp}}
\newcommand{\typ}{\textnormal{typ}}
\newcommand{\pos}{\textnormal{pos}}
\newcommand{\fpos}{\textnormal{f-pos}}
\newcommand{\TT}{\mathbf{T}}
\newcommand{\nc}{\operatorname{nc}}
\newcommand{\N}{\mathbb{N}}
\newcommand{\prim}{\textnormal{prim}}
\newcommand{\Z}{\mathbb{Z}}
\newcommand{\Q}{\mathbb{Q}}
\newcommand{\R}{\mathbb{R}}
\newcommand{\F}{\mathcal{F}}
\newcommand{\bS}{\mathbf{S}}
\newcommand{\Hh}{\mathbb{H}}
\newcommand{\C}{\mathbb{C}}
\newcommand{\Qbar}{\overline{\mathbb{Q}}}
\newcommand{\GL}{\mathbf{GL}}
\newcommand{\SL}{\mathbf{SL}}
\newcommand{\G}{{\mathbf G}}
\newcommand{\fg}{\mathfrak{g}}
\newcommand{\fh}{\mathfrak{h}}
\newcommand{\fl}{\mathfrak{l}}
\newcommand{\ft}{\mathfrak{t}}
\newcommand{\fm}{\mathfrak{m}}
\newcommand{\fs}{\mathfrak{s}}
\newcommand{\fq}{\mathfrak{q}}
\newcommand{\fr}{\mathfrak{r}}
\newcommand{\fn}{\mathfrak{n}}
\newcommand{\fu}{\mathfrak{u}}
\newcommand{\cA}{\mathcal{A}}
\newcommand{\cO}{\mathcal{O}}
\newcommand{\cV}{{\mathcal V}}
\newcommand{\simple}{\textnormal{simple}}
\begin{document}

\title{On the distribution of the Hodge locus}\blfootnote{\emph{2020
    Mathematics Subject Classification}. 14D07, 14C30, 14G35, 22F30,
  03C64.}\blfootnote{\emph{Key words and phrases}. Hodge theory and
  Mumford--Tate domains, Functional transcendence, Unlikely and likely
  intersections.}\date{\today}  
\author{Gregorio Baldi, Bruno Klingler, and Emmanuel Ullmo} 

\begin{abstract} Given a polarizable $\ZZ$-variation of Hodge structures $\mathbb{V}$
  over a complex smooth quasi-projective
  base $S$, a classical result of Cattani, Deligne and Kaplan says that its
  Hodge locus (i.e. the locus where exceptional Hodge
  tensors appear) is a {\em countable} union of irreducible algebraic
  subvarieties of $S$, called the special subvarieties for
  $\mathbb{V}$. Our main result in this paper is that, if the level of
  $\VV$ is at least $3$, this Hodge locus is in fact a {\em finite} union of
  such special subvarieties (hence is algebraic), at least if we
  restrict ourselves to the Hodge locus factorwise of positive
  period dimension (\Cref{corol}). For instance the Hodge locus of
  positive period dimension of the universal family of degree $d$
    smooth hypersurfaces in $\mathbf{P}^{n+1}_\mathbb{C}$, 
     $n\geq 3, d\geq 5$ and $(n,d)\neq (4,5)$, is algebraic. On the other hand we prove that in
    level $1$ or $2$, the Hodge locus is analytically dense in $S^\an$
    as soon as it contains one typical special subvariety. These
    results follow from a complete
   elucidation of the distribution in $S$ of
  the special subvarieties in terms of typical/atypical intersections, with the exception of the atypical special
  subvarieties of zero period dimension. 
\end{abstract}
\maketitle

\tableofcontents

\section{Motivation and first main result}

\subsection{Hodge locus and special subvarieties}\label{section0}
Let $f: X \to S$ be a smooth projective morphism of smooth irreducible complex
quasi-projective varieties. Motivated by the study of the Hodge conjecture for the
fibres of $f$, one defines the Hodge locus $\HL(S, f)$ as the locus of
points $s\in S^{\an}= S(\CC)$ for which the Hodge structure
$H^\bullet(X_s^{\an},\ZZ)_\prim/(\textnormal{torsion})$ admits more \emph{Hodge
tensors} than the primitive cohomology of the very
general fibre. Here a Hodge class of a pure $\ZZ$-Hodge structure $V=(V_\ZZ, F^\bullet)$ is a class in $V_\QQ$
whose image in $V_\CC$ lies in the zeroth piece $F^0 V_\CC$ of the
Hodge filtration, or equivalently a morphism of Hodge 
structures $\QQ(0) \to V_\QQ$; and a Hodge tensor for $V$ is a Hodge class
in $V^\otimes:= \bigoplus_{a,b\in
  \N} V^{\otimes a} \otimes( V^\vee)^{\otimes b}$, where
$V^\vee$ denotes the Hodge structure dual to $V$. In this geometric case Weil \cite{Weil}
asked whether $\HL(S, f)$ is a countable union of closed
algebraic subvarieties of $S$ (he noticed that a positive
answer follows easily from the rational Hodge conjecture).

More generally, let
$\VV:=(\VV_\ZZ, \cV, F^\bullet, \nabla)$ be a polarizable variation of
$\ZZ$-Hodge 
structures ($\ZZ$VHS) on $S$. Thus $\VV_\ZZ$ is a finite rank locally free $\ZZ_{S^{\an}}$-local system on
the complex manifold $S^{\an}$ and $(\cV, F^\bullet,
\nabla)$ is a filtered regular algebraic flat connection on
$S$ such that the analytification $(\cV^{\an}, \nabla^{\an})$ is isomorphic
to $\VV_\ZZ \otimes_{\ZZ_{S^{\an}}} \cO_{S^{{\an}}}$ endowed
with the holomorphic flat connection defined by $\VV_\ZZ$ (the
filtration $F^\bullet$ is called the Hodge filtration). The Hodge locus $\HL(S, \VV^\otimes)$ is the subset of points $s \in 
S^{\an}$ for which the Hodge structure $\VV_s$ admits more Hodge tensors
than the very general fiber $\VV_{s'}$.
We recover the geometric
situation by considering the polarizable $\ZZ$VHS ``of geometric
origin'' 
\begin{displaymath}
\VV = \left( \bigoplus_{k \in \NN} ( R^{k}f^{\an}_* \ZZ)_{\operatorname{prim}}/ (\textnormal{torsion}), \cV= 
\bigoplus_{k \in \NN} R^{k}f_* \Omega^\bullet_{X/S}, F^\bullet,
\nabla\right).
\end{displaymath}
In this case the
Hodge filtration $F^\bullet$ on $\cV$ is
induced by the stupid filtration on the algebraic de Rham complex
$\Omega^\bullet_{X/S}$ and $\nabla$ is the Gau\ss\--Manin
connection.

Cattani, Deligne and Kaplan \cite[Theorem 1.1]{CDK95} proved the following
celebrated result, which in particular answers positively Weil's
question (we also refer to \cite[Theorem 1.6]{BKT} for an alternative
proof, using o-minimality):

\begin{theor}[Cattani-Deligne-Kaplan] \label{CDK}
Let $S$ be a smooth connected complex quasi-projective algebraic variety and
$\VV$ be a polarizable $\ZZ$VHS over $S$. 
Then $\HL(S, \VV^\otimes)$ is a countable union of
closed irreducible algebraic subvarieties of $S$: the strict special
subvarieties of $S$ for $\VV$.
\end{theor}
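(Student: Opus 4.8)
The plan is to reduce \Cref{CDK} to a local description of the Hodge locus together with an algebraization statement, following the o-minimal strategy of \cite{BKT} (the asymptotic analysis of \cite{CDK95} gives an alternative, classical route). \emph{Step 1 (reduction to a single $\ZZ$VHS).} Since $\VV^\otimes=\bigoplus_{a,b}\VV^{\otimes a}\otimes(\VV^\vee)^{\otimes b}$ and a Hodge tensor is a Hodge class in one such summand, it suffices — taking a countable union over $(a,b)$ — to prove the following: for a polarizable $\ZZ$VHS $\mathbb{W}$ on $S$, the set of $s\in S^{\an}$ carrying a nonzero $v\in\mathbb{W}_{\ZZ,s}\cap F^0\mathbb{W}_{\CC,s}$ that is not the restriction of a global flat section is a countable union of closed irreducible algebraic subvarieties. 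Countability is immediate: the fibre $\mathbb{W}_{\ZZ,s_0}$ over a fixed base point is a fixed finite-rank lattice, so there are only countably many candidate classes up to monodromy, and $S^{\an}$ is covered by countably many simply connected charts.

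\emph{Step 2 (local analytic description).} Let $\mathbf{G}$ be the generic Mumford--Tate group of $\mathbb{W}$, $D$ the associated Mumford--Tate domain, $\check D$ its compact dual — a projective $\mathbf{G}_\CC$-variety carrying the tautological Hodge filtration $F^\bullet$ — and $\Gamma\subset\mathbf{G}(\ZZ)$ the monodromy group, so that one has a period map $\Phi\colon S^{\an}\to\Gamma\backslash D$ with local holomorphic lifts $\widetilde\Phi_U\colon U\to D\hookrightarrow\check D$ over simply connected opens $U$. For an integral vector $v$ (via a base-point trivialization of $\mathbb{W}_\ZZ$), the locus $\check D_v:=\{x\in\check D:v\in F^0_x\}$ is Zariski closed in $\check D$; hence $\widetilde\Phi_U^{-1}(\check D_v)$ is closed analytic in $U$, and, gluing over a chart cover while letting $v$ run through its $\Gamma$-orbit, these assemble into a closed analytic subset $Z_v\subset S^{\an}$. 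The Hodge locus is then $\bigcup_v Z_v$, the union over the countably many integral $v$ that are not everywhere Hodge.

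\emph{Step 3 (algebraization, the heart of the matter).} In \cite{BKT} one equips $\Gamma\backslash D$ with its canonical $\Ranexp$-definable structure — via a definable fundamental set $\mathfrak F\subset D$ — and proves that $\Phi$, equivalently a definable (piecewise real-analytic) lift $\psi\colon S^{\an}\to\mathfrak F$, is definable. On a piece where $\Phi$ lifts through $\psi$ one has $Z_v=\psi^{-1}\big(\bigcup_{\gamma\in\Gamma}\mathfrak F\cap\gamma\cdot\check D_v\big)$; a norm estimate (ultimately the Cattani--Kaplan--Schmid estimates) bounding the size of an integral Hodge class at a point of $\mathfrak F$ in terms of its distance to the boundary shows that on each relatively compact part of $\mathfrak F$ only finitely many translates $\gamma\cdot v$ are relevant, whence $Z_v$ is a closed $\Ranexp$-definable subset of $S^{\an}$. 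Being also complex analytic and sitting inside the complex algebraic variety $S$, it is algebraic by the definable Chow theorem (o-minimal GAGA of Peterzil--Starchenko). Passing to irreducible components and taking the countable union over $v$, and checking that each component is maximal among closed irreducible subvarieties with the same generic Mumford--Tate group — hence a special subvariety (\Cref{defsp}) — finishes the proof.

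The main obstacle is concentrated in Step 3, and specifically in controlling $\Phi$ near the boundary of a compactification $\overline{S}$ of $S$, where local monodromies are only quasi-unipotent and the Hodge structures degenerate; this rests on Schmid's nilpotent and $\mathrm{SL}_2$-orbit theorems and the $\ell^2$/norm estimates of Cattani--Kaplan--Schmid. In \cite{CDK95} these inputs are used directly: one shows a component $Z$ of the Hodge locus has locally finite volume near $\overline{S}$ with finitely many boundary branches, extends it to a closed analytic subset of $\overline{S}$ by Bishop's extension theorem, and concludes by Chow. The o-minimal route of \cite{BKT} absorbs the same asymptotic analysis into the definability of $\Phi$, after which the algebraization is purely formal.
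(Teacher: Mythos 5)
The paper does not prove \Cref{CDK}: it is quoted as a known result, with a pointer to \cite[Theorem~1.1]{CDK95} for the original argument and to \cite[Theorem~1.6]{BKT} for the o-minimal alternative. There is therefore no in-paper proof to compare against. Your sketch is a faithful and essentially correct outline of the \cite{BKT} route — reduction to a single $\ZZ$VHS summand, description of the locus $Z_v$ as the pullback of $\check D_v\subset\check D$ along local lifts of the period map, definability of $\Phi$ (via the Cattani--Kaplan--Schmid norm estimates near the boundary) giving definability of $Z_v$, and then algebraicity by the Peterzil--Starchenko definable Chow theorem. Two small points worth flagging: in Step~2, the assertion that the $\widetilde\Phi_U^{-1}(\check D_v)$ ``assemble into a closed analytic subset'' already requires the local finiteness of the $\Gamma$-orbit of translates $\gamma\cdot\check D_v$ meeting a relatively compact piece of the image, i.e.\ the same asymptotic input you invoke only in Step~3; and the final claim that each irreducible component is maximal with its generic Mumford--Tate group (hence special in the sense of \Cref{defsp}) deserves a line of justification (pass to the special closure of a component in $\Gamma\backslash D$, pull back, and observe the resulting component both contains the given one and lies in the Hodge locus). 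Neither is a gap of substance; both are handled in \cite{CDK95} and \cite{BKT}.
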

We refer to \Cref{defsp} for a detailed description of the \emph{special} subvarieties of $S$. 
Accordingly to such a definition, $S$ itself is a special subvariety. We are therefore concerned with the 
distribution of the strict special subvarieties of $S$ (i.e. the ones different from $S$).

\subsection{Distribution of special subvarieties}
After \Cref{CDK} one would like to 
understand the distribution in $S$ of the (strict) special
subvarieties for $\VV$. For instance (see \cite[Question 1.2]{klin}): are there any geometric
constraints on the Zariski closure of $\HL(S, \VV^\otimes)$?

The fundamental tool for studying these questions is the
  holomorphic period
  map 
\begin{equation} \label{period0}
  \Phi: S^{\an} \to \Gamma \backslash D,
\end{equation}
completely describing the $\ZZ$VHS $\VV$ (see \Cref{preliminaries} for more details on what follows).
Here $(\G, D)$ denotes the generic Hodge datum of $\VV$
and 
$\Gamma \backslash D$ is the associated Hodge variety. The
Mumford-Tate domain $D$ decomposes as a product $D_1 \times \cdots
\times D_k$, according to the decomposition of the adjoint group $\G^\ad$ into a product $\G_1
\times \cdots \times \G_k$ of simple factors (notice that some factors
$\G_i$ may be $\RR$-anisotropic). Replacing $S$ by a finite \'etale
cover and reordering the factors if necessary, the lattice $\Gamma \subset
\G^\ad(\R)^+$ decomposes as a direct product $\Gamma
\cong \Gamma_1 \times \cdots \times \Gamma_r$, $ r \leq k$, where $\Gamma_i\subset
\G_i(\R)^+$, $1\leq i\leq r$, is an arithmetic lattice, Zariski-dense in
$\G_i$. Writing $D' = D_{r+1} \times \cdots \times D_k$ for the
product of factors where the monodromy is trivial (it contains in
particular all the factors $D_i$ for which $\G_i$ is
$\RR$-anisotropic), the period map is written
\begin{equation} \label{period}
  \Phi: S^{\an} \to \Gamma \backslash D \cong \Gamma_1 \backslash D_1
  \times \cdots \times \Gamma_r
  \backslash D_r \times D'\;\;,
\end{equation}
and the projection of $\Phi(S^\an)$ on $D'$ is a point. For more details, see also \emph{the structure theorem for VHS} from \cite[(III.A.2)]{MR2918237}.

By enlarging $S$ if necessary, one can assume
without loss of generality that $\Phi$ is proper (see \cite[Theorem 9.5]{Griffiths}). The image $\Phi(Z^\an)$ for
any closed algebraic subvariety $Z\subset S$ is then a closed analytic
subvariety of $\Gamma \backslash D$, by Remmert's proper mapping
theorem. It is thus natural to distinguish between special subvarieties $Z$
of \emph{zero period dimension} (i.e. $\Phi(Z^\an)$ is a point of $\Gamma
\backslash D$), which are geometrically elusive; and those of \emph{positive
period dimension} (i.e. $\dim_\CC \Phi(Z^\an) >0$), that are susceptible
of a variational study. Taking into account product
phenomena, we are lead to:

\begin{defi}  \label{positive period dimension} \label{fpositive} \hfill
  \begin{enumerate}
    \item
  A subvariety $Z$ of $S$ is said of {\em positive period dimension for $\VV$} if $\Phi(Z^{\an})$ has
  positive dimension. If moreover the projection of $\Phi(Z^{\an})$ on each factor $\Gamma_i \backslash
  D_i$, $1\leq i \leq r$, has positive dimension, then $Z$ is said to
  be \emph{factorwise of positive period dimension}.
  
 \item
  The {\em Hodge locus of positive period dimension} (resp. {\em factorwise of
  positive period dimension}) 
  $\HL(S, \VV^\otimes)_{\pos}$ (resp. $\HL(S, \VV^\otimes)_{\fpos})$ is the
  union of the special subvarieties of $S$
  for $\VV$ of positive period dimension (resp. factorwise of positive
  period dimension). 
\end{enumerate}
\end{defi}

Thus $\HL(S, \VV^\otimes)_{\fpos} \subset \HL(S, \VV^\otimes)_{\pos}
\subset \HL(S, \VV^\otimes)$ and the first inclusion is an equality if
$\G^\ad$ is simple.

\medskip
Recently Otwinowska and the second author \cite{KO} proved the
following theorem (they work in the case where $\G^\ad$ is simple,
but their proof adapts immediately to the general case):

\begin{theor}[{\cite[Theorem 1.5]{KO}}] \label{KO}
Let $\VV$ be a polarizable $\ZZ$VHS on a smooth connected complex quasi-projective variety
$S$. Then either $\HL(S, \VV^{\otimes})_\fpos$ is an algebraic
subvariety of $S$ (not necessarily irreducible) or it is Zariski-dense in $S$.
\end{theor}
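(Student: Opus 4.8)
The plan is to prove Theorem~\ref{KO} by a dichotomy argument that exploits the rigidity of the period map together with the algebraicity of special subvarieties (Theorem~\ref{CDK}) and the behaviour of the monodromy group. First I would recall that, by definition, $\HL(S,\VV^\otimes)_\fpos$ is a countable union $\bigcup_n Z_n$ of special subvarieties, each factorwise of positive period dimension; by Theorem~\ref{CDK} each $Z_n$ is an irreducible closed algebraic subvariety of $S$. Let $Y$ denote the Zariski closure of $\HL(S,\VV^\otimes)_\fpos$ in $S$, and let $Y = \bigcup_{i} Y_i$ be its decomposition into irreducible components. The goal is to show: either only finitely many of the $Z_n$ are needed (so $Y = \HL(S,\VV^\otimes)_\fpos$ is already algebraic), or $Y = S$.

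The key step is a Baire-category / countability argument combined with the structure theory of special subvarieties. Since $\HL(S,\VV^\otimes)_\fpos = \bigcup_n Z_n$ is a countable union of irreducible algebraic subvarieties and its Zariski closure is $Y = \bigcup_i Y_i$, some irreducible component $Y_i$ must contain infinitely many distinct $Z_n$ with $Z_n$ Zariski-dense in $Y_i$ (otherwise the union over each component is finite, hence $Y$ itself equals the finite union of the $Z_n$'s and $\HL(S,\VV^\otimes)_\fpos$ is algebraic). So I may assume there is an irreducible algebraic $Y' \subset S$ containing a Zariski-dense countable family of special subvarieties $Z_n$, each factorwise of positive period dimension and each strictly contained in $Y'$ (after discarding the case where some $Z_n = Y'$, in which case replace $Y'$ by that $Z_n$ and re-run; one should also check $Y'$ is itself special, being a component of the Zariski closure of a Hodge-theoretically defined set). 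The heart of the matter is then to upgrade this accumulation of special subvarieties inside $Y'$ to the conclusion $Y' = S$: one uses that the generic Mumford--Tate group, resp. the algebraic monodromy group $\mathbf{H}_{Y'}$, of the restriction $\VV|_{Y'}$ has smaller derived subgroup than that of $\VV$ on $S$ only if $Y'$ is itself strict special; but if $Y' \subsetneq S$ is special, one applies the same analysis to $\VV|_{Y'}$ inductively, and the families of $Z_n \subset Y'$ give an "excess" of special subvarieties relative to the smaller ambient monodromy, forcing, after finitely many steps, the situation where the accumulation happens inside an ambient variety on which the VHS is isotrivial along the period fibres --- which is impossible since the $Z_n$ are factorwise of positive period dimension. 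Thus the only way the accumulation persists is $Y' = S$.

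Concretely, the mechanism that makes the density spread is the following: for each $n$, being factorwise of positive period dimension means $\Phi(Z_n^\an)$ projects onto a positive-dimensional subvariety in \emph{every} simple factor $\Gamma_i\backslash D_i$. One transports the $Z_n$ by the action of the Hecke correspondences / the commensurator, or more elementarily uses the fact that the Hodge-generic locus in $Y'$ of the family $\{Z_n\}$ forces, via the functional-transcendence input (the Ax--Schanuel theorem for the period map, as in \cite{BKT} and used in \cite{KO}), that $Y'$ must be weakly special or else that the $Z_n$ must "fill up" $Y'$ in the period direction. Combining this with the assumption that the $Z_n$ are Zariski-dense in $Y'$ and of positive period dimension in each factor yields that $\Phi(Y'^\an)$ is stable under enough monodromy translates to equal (a product saturating) $\Gamma\backslash D$; then properness of $\Phi$ and irreducibility of $S$ give $Y' = S$. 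I expect the main obstacle to be exactly this propagation step --- showing that infinitely many factorwise-positive-period-dimension special subvarieties accumulating Zariski-densely in a proper subvariety $Y'$ is impossible unless $Y' = S$ --- which is where the interplay between Ax--Schanuel, the classification of special (and weakly special) subvarieties, and the product decomposition \eqref{period} must be handled with care; the factorwise hypothesis is what rules out the degenerate product situations and is essential, as the statement of the theorem only concerns $\HL(S,\VV^\otimes)_\fpos$ rather than $\HL(S,\VV^\otimes)_\pos$.
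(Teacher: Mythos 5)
The statement you are proving is actually quoted by the paper from \cite[Theorem 1.5]{KO} and is not re-proved there: the authors explicitly say they do not use \Cref{KO} and instead prove the variant \Cref{KO1} by combining \Cref{geometricZP}, \Cref{level criterion} and \Cref{typicallocus}. That route is qualitatively different from yours: it splits by level of the VHS and goes through the atypical/typical dichotomy, rather than working directly with the closure of $\HL(S,\VV^\otimes)_\fpos$. Your outline is instead closer in spirit to the original argument of \cite{KO} and to \Cref{geometricZPweakly} of this paper.

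However, there is a genuine gap exactly where you flag one: the ``propagation step.'' You reduce to an irreducible $Y'\subsetneq S$ containing a Zariski-dense family of factorwise-positive-period-dimension special subvarieties $Z_n$, and you need to conclude that $Y'$ is itself special (hence in $\HL(S,\VV^\otimes)_\fpos$, which then closes the dichotomy via \Cref{converse}: a weakly special subvariety containing a special one is special). Note that your stated target --- ``accumulating Zariski-densely in a proper $Y'$ is impossible unless $Y'=S$'' --- is the wrong goal: accumulation inside a proper special $Y'$ is perfectly possible and does not force $Y'=S$; the dichotomy is precisely between ``$Y'$ special'' and ``$Y'=S$.'' The real missing ingredient is the definability/o-minimality mechanism: one must organize the $Z_n$ by their weak Hodge data, observe that the corresponding parameter set (the analogue of $\Sigma^j(\MM,D_M)$ in \Cref{mainpropwithas}, living in $H/N_H(M)$) is a \emph{definable} subset in $\R_{\an,\exp}$ using a definable fundamental set as in \Cref{fundset}, and use Ax--Schanuel only to show this set is \emph{countable}; countable plus definable then gives \emph{finite}. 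Without this step a countable union of conjugacy classes cannot be controlled. One then has to split on whether the centraliser $\LL=\mathbf{Z}_{\G^\ad}(\MM^\ad)$ is compact (giving finitely many $Z_n$, hence $Y'$ special) or non-compact (giving a larger special $Y''\supsetneq Y'$, and it is here, and only here, that the factorwise positivity is used to rule out the degenerate branch where the extra factor sees period dimension zero). Your appeal to ``Hecke correspondences / the commensurator'' is also not available for a general polarizable $\ZZ$VHS, only in the Shimura-type case, so it cannot serve as a substitute for this mechanism.
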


\begin{rmk}
Saying that $\HL(S, \VV^{\otimes})_\fpos$ is algebraic is equivalent to
saying that the set of strict special subvarieties of $S$ for 
$\VV$ factorwise of positive period dimension has only finitely many maximal
elements for the inclusion.
\end{rmk}

\subsection{An algebraicity result}
\Cref{KO} has an important flaw: it does not provide any criterion for
deciding which branch of the alternative holds true. 
Our most striking result in this paper 
proves that $\HL(S, \VV^{\otimes})_\fpos$ is algebraic in most cases.
A simple measure of the complexity of $\VV$ is its level:
roughly, the length of the Hodge filtration on the holomorphic
  tangent space of $D$. See \Cref{levelVHS} for the precise definition in terms of the algebraic
  monodromy group of $\VV$.  While special subvarieties usually abound for  
$\ZZ$VHSs of level one (e.g. families of abelian varieties or families
of K3 surfaces) and for some $\ZZ$VHS of level two (e.g Green's famous example
of the Noether-Lefschetz locus for degree $d$ ($d>3$) surfaces in
$\PP^3_\CC$, see \cite[Proposition 5.20]{Voisin}), we show:

\begin{theor} \label{corol}
 Let $\VV$ be a polarizable $\ZZ$VHS
on a smooth connected complex quasi-projective variety 
$S$. If $\VV$ is of level at least $3$ then $\HL(S,
\VV^\otimes)_\fpos$ is a finite union of maximal atypical special
subvarieties (hence is algebraic). In particular, if moreover $\G^{\ad}$ is
simple, then $\HL(S, \VV^\otimes)_\pos$
 is algebraic in $S$.
\end{theor}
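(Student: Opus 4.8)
\textbf{Proof plan for \Cref{corol}.} The plan is to combine \Cref{geometricZP} and \Cref{level criterion} in an essentially formal way. Assume $\VV$ is of level at least $3$. First I would invoke \Cref{level criterion} to conclude $\HL(S,\VV^\otimes)_\typ = \emptyset$, hence $\HL(S,\VV^\otimes) = \HL(S,\VV^\otimes)_\atyp$. In particular every special subvariety $Z$ of positive period dimension that is factorwise of positive period dimension is atypical, so that $\HL(S,\VV^\otimes)_\fpos \subseteq \HL(S,\VV^\otimes)_{\pos,\atyp}$. I would then look at the Zariski closure of $\HL(S,\VV^\otimes)_\fpos$ inside the Zariski closure of $\HL(S,\VV^\otimes)_{\pos,\atyp}$ and apply \Cref{geometricZP} to each of its irreducible components $Z$.

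The key point is to rule out branch (b) of \Cref{geometricZP} for components $Z$ arising as Zariski closures of factorwise-positive-period-dimension special subvarieties. In branch (b), $Z$ is Hodge generic in a \emph{typical} special subvariety $\Phi^{-1}(\Gamma_{\G_Z}\backslash D_{G_Z})^0$ of $S$; since $\VV$ has level at least $3$, so does the induced $\ZZ$VHS on any such subvariety (restriction does not increase level, and the relevant step count is inherited from the generic Mumford--Tate Lie algebra as in \Cref{levelVHS}), so \Cref{level criterion} applied on that subvariety forbids it from being typical unless it is all of $S$ — but then $Z$ is Hodge generic in $S$, forcing $Z = S$, which has positive (indeed maximal) period dimension and is not a strict special subvariety, contradicting that $Z$ lies in the strict Hodge locus. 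Alternatively, and more directly: in branch (b) the component $Z$ contains a Zariski-dense set of special subvarieties \emph{of zero period dimension for $\Phi''$}, i.e. the projection $\Phi''$ is constant on a Zariski-dense family of subvarieties; but the defining property of being factorwise of positive period dimension is that \emph{every} factor projection of $\Phi(Z^\an)$ has positive dimension, and one checks this is incompatible with $Z$ being the Zariski closure of subvarieties that are themselves factorwise positive — the factor corresponding to $\G''$ would have to be trivial, contradicting the non-triviality of the product decomposition $(\G',D')\times(\G'',D'')$. Either way, only branch (a) survives, so each such $Z$ is a maximal atypical special subvariety.

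Hence $\HL(S,\VV^\otimes)_\fpos$ is contained in a finite union of maximal atypical special subvarieties (one for each irreducible component of the Zariski closure), and since each maximal atypical special subvariety that is itself factorwise of positive period dimension belongs to $\HL(S,\VV^\otimes)_\fpos$, we get equality: $\HL(S,\VV^\otimes)_\fpos$ is a finite union of maximal atypical special subvarieties, in particular an algebraic subvariety of $S$. Finally, if $\G^\ad$ is simple, then by the remark following \Cref{fpositive} the inclusion $\HL(S,\VV^\otimes)_\fpos \subseteq \HL(S,\VV^\otimes)_\pos$ is an equality (a subvariety of positive period dimension is automatically factorwise of positive period dimension when there is only one simple factor), so $\HL(S,\VV^\otimes)_\pos$ is algebraic.

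The main obstacle I anticipate is the bookkeeping in the previous paragraph's dichotomy: one must be careful that ``factorwise of positive period dimension'' is a condition preserved under taking irreducible components of Zariski closures in the right sense, and that the product decomposition furnished by branch (b) of \Cref{geometricZP} genuinely contradicts it. This requires unwinding \Cref{fpositive} together with the structure of the period map \eqref{period} as a product over the simple factors of $\G^\ad$, and checking that a Zariski-dense family of $\Phi''$-constant subvarieties inside $Z$ forces the $\Phi''$-image of $Z$ to be lower-dimensional than required — i.e. that the $\G''$-factor contributes nothing to the period dimension of the generic point of $Z$, contradicting factorwise positivity. Once this compatibility is pinned down, the rest is the formal combination of the two cited theorems.
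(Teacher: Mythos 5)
Your overall approach is the same as the paper's: combine \Cref{geometricZP} with \Cref{level criterion} and rule out branch (b). That is the right strategy and the right pair of ingredients. However, both of your proposed arguments for killing branch (b) contain errors, and you are working harder than necessary because you are trying to handle a case ($W := \Phi^{-1}(\Gamma_{\G_Z}\backslash D_{G_Z})^0 = S$) that in fact never arises.

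Concerning argument A: the parenthetical claim ``restriction does not increase level'' is false. The level of $\VV$ restricted to a special subvariety $W$ is computed from the Hodge--Lie algebra $\fg^\ad_W$ of the generic Mumford--Tate group $\G_W$ of $W$, which is a (typically proper) subgroup of $\G$; its level can strictly \emph{decrease}. Fortunately this claim is also unnecessary: you should apply \Cref{level criterion} to $\VV$ over $S$ itself, which tells you there are no \emph{strict} typical special subvarieties of $S$, and then observe that $W$ is such a thing. The further assertion that ``$Z$ Hodge generic in $S$ forces $Z = S$'' is also false in general (e.g.\ the Torelli locus $\mathcal{T}_4 \subsetneq \cA_4$ is Hodge generic but proper), but again you do not need it: inspecting the proof of \Cref{geometricZPweakly} (and its adaptation in \Cref{proofGeometricZP}) shows that the Hodge subdatum $(\MM',D_{M'})$ underlying $W$ is constructed as a \emph{strict} subdatum of $(\G^\ad,D)$, hence $W\subsetneq S$ is automatically a strict special subvariety, and \Cref{level criterion} rules it out immediately.

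Concerning argument B: this one does not work. You conflate the product decomposition $(\G',D')\times(\G'',D'')$ of the generic \emph{adjoint Mumford--Tate datum of $Z$} furnished by branch (b) with the decomposition of $\G^\ad = \G_1\times\cdots\times\G_r$ into $\QQ$-simple factors that defines ``factorwise of positive period dimension'' in \Cref{fpositive}. Unless $\G_Z=\G$, the factor $\G''$ need not be aligned with any $\G_i$, so zero period dimension for $\Phi''$ does not contradict factorwise positivity for $\Phi$. Moreover the family of $\Phi''$-constant special subvarieties appearing in branch (b) is \emph{a priori} a different family from the factorwise-positive ones whose closure you are trying to control; both can coexist inside $Z$.

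Once you drop arguments A and B and simply observe that branch (b) produces a strict typical special subvariety of $S$, forbidden by \Cref{level criterion}, your proof collapses to the paper's, and the remainder (finitely many maximal atypical special subvarieties, each one containing a factorwise-positive special subvariety is itself factorwise positive, and the $\G^\ad$ simple case via the remark after \Cref{fpositive}) is fine. One small procedural point worth tightening: \Cref{geometricZP} as stated applies to irreducible components of $\overline{\HL(S,\VV^\otimes)_{\pos,\atyp}}^{\Zar}$, not of $\overline{\HL(S,\VV^\otimes)_\fpos}^{\Zar}$; since the former contains the latter and is a finite union of maximal atypical special subvarieties, the desired conclusion about $\HL(S,\VV^\otimes)_\fpos$ follows, but you should apply the theorem to the object it actually speaks about.
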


As a simple geometric illustration of \Cref{corol}:
\begin{cor} \label{hypersurface}
Let $\PP^{N(n, d)}_\CC$ be the projective space parametrising the hypersurfaces $X$ of
$\PP^{n+1}_\CC$ of degree $d$ (where $N(n, d)=\binom{n+d+1}{d}-1$). Let $U_{n, d} \subset \PP^{N(n, d)}_\CC$ be
the Zariski-open subset parametrising the smooth
hypersurfaces $X$ and let 
$
\VV \to U_{n,d}
$
be the $\Z$VHS corresponding to the primitive cohomology
$H^n(X, \ZZ)_\prim$. 

If $n=3$ and $d \geq 5$; $n=4$ and $d\geq 6$; $n=5,6,8$ and $d \geq 4$; and $n=7$ or $\geq 9$ and $d \geq 3$, then the level of $\VV\to U_{n,d}$ is at least 3, and therefore $\HL(U_{n,d}, \VV^\otimes)_\pos \subset U_{n,d}$ is algebraic.
\end{cor}

\noi
Let us also mention that \Cref{corol}, combined with the main result of
\cite{LV}, provides a nice first result in the direction of the
so called \emph{refined Bombieri-Lang
conjecture}. See our note \cite{BKU2}.

\section{Special subvarieties as (a)typical intersection loci:
  conjectures}

The heuristic behind the proof of \Cref{corol} is based on a remarkable feature of Hodge theory: thanks to the
existence of period maps, special subvarieties 
can also be defined as {\em intersection loci}. Indeed, a closed irreducible subvariety $Z \subset S$ is special
for $\VV$ (we will equivalently say that it is special for $\Phi$)
precisely if $Z^{\an}$ coincides with an analytic irreducible 
component $\Phi^{-1}(\Gamma' \backslash D')^0$ of $\Phi^{-1}(\Gamma' \backslash D')$, for $(\G', D') \subset
(\G, D)$ the generic Hodge subdatum of $Z$ and $\Gamma' \backslash D'
\subset \Gamma \backslash D$ the associated Hodge subvariety.

\begin{rmk}
The description of special subvarieties as intersection loci makes the study
of the Hodge locus simpler than the study of the Tate locus (its
analogue obtained when replacing $\VV$ by a lisse $\ell$-adic sheaf over a smooth variety $S$ over a
field of finite type and the Hodge tensors by the Tate tensors).
\end{rmk}

This suggests a fundamental dichotomy between \emph{typical} and \emph{atypical} 
intersections, which should govern whether or not the Hodge locus
$\HL(S, \VV^\otimes)$ is algebraic, and in particular which branch of the alternative is
satisfied in \Cref{KO}. Such a dichotomy was 
proposed for the first time by the second author in \cite{klin}:

\begin{defi}\label{atypical}
  Let $Z = \Phi^{-1}(\Gamma' \backslash D')^0\subset S$ be a special
  subvariety for $\VV$, with generic Hodge datum $(\G', D')$. It is
  said to be \emph{atypical} if $\Phi(S^{\an})$ and $\Gamma'
  \backslash D'$ do not intersect generically along $\Phi(Z)$: 
  \begin{equation} \label{equation atypical}
    \codim_{\Gamma\backslash D} \Phi(Z^{\an}) < \codim_{\Gamma\backslash
    D} \Phi(S^{\an}) + \codim_{\Gamma\backslash D} \Gamma'\backslash
  D'\;\;,
  \end{equation}
  or if $Z$ is singular for $\VV$
  (meaning that $\Phi(Z^\an)$ is contained in the singular locus of $\Phi(S^\an)$).
  \noindent
  Otherwise it is said to be \emph{typical}.
\end{defi}

\begin{rmk} \label{singular}
  In \Cref{atypical}, deciding that if $Z$ is singular for $\VV$ then
  it is atypical for $\VV$ hides the fact that the numerical
  condition~(\ref{equation atypical}) is too naive when $Z$ is
  singular. This is the right convention for hoping
  \Cref{conj-typical} below to be true. Notice that if we define the {\em
    singular locus of $S$ for $\VV$} as the preimage $S^\sing_\VV$
  under $\Phi$ of the singular 
locus of the complex analytic variety $\Phi(S^\an)$ (in particular any
$Z\subset S$ special and singular for $\VV$ is contained in
$S^\sing_\VV$), it follows from the
  definability of $\Phi$ in the o-minimal structure $\RR_{\an, \exp}$
  \cite[Theorem 1.3]{BKT} that $S^\sing_\VV$ is actually a closed (strict)
  algebraic subvariety of $S$ (not necessarily irreducible), see \cite[Section 8]{KO} for
  a proof. The precise structure of the singular special locus is
  described in \Cref{below}.
\end{rmk}

\begin{defi} \label{atypical locus}
  The \emph{atypical Hodge locus} $\HL(S,\VV^\otimes)_{\atyp} \subset \HL(S, \VV^\otimes)$
  (resp. the \emph{typical Hodge locus} $\HL(S,\VV^\otimes)_{\typ} \subset \HL(S, \VV^\otimes)$) is
  the union of the atypical (resp. strict typical) special subvarieties of $S$ for $\VV$.
\end{defi}

\subsection{Conjectures}
We expect the Hodge locus
\begin{displaymath}
\HL(S, \VV^\otimes) = \HL(S,\VV^\otimes)_{\atyp} \, \bigcup\, 
\HL(S,\VV^\otimes)_{\typ}
\end{displaymath}
 to satisfy the
following two conjectures:

\begin{conj}[Zilber--Pink conjecture for the atypical Hodge locus, strong version] \label{main conj}
  Let $\VV$ be a polarizable $\ZZ$VHS on an irreducible smooth
  quasi-projective variety $S$. The atypical Hodge locus
  $\HL(S,\VV^\otimes)_{\atyp}$ is a finite union of atypical special
subvarieties of $S$ for $\VV$. Equivalently: the set of atypical
special subvarieties of $S$ for $\VV$ has finitely many maximal elements for
the inclusion.
\end{conj}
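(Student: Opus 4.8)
The statement is \Cref{main conj}, the strong Zilber--Pink conjecture for the full atypical Hodge locus, including atypical special subvarieties of zero period dimension. Since the abstract of the paper explicitly states that the main results describe the Hodge locus \emph{with the exception of the atypical special subvarieties of zero period dimension}, I do not expect \Cref{main conj} to be proved in this paper; it is stated as a conjecture. Accordingly, the honest ``proof proposal'' is a strategy one would pursue to attack it, with a clear identification of what is provable now and where the genuine obstruction lies.

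\textbf{Step 1: Reduce to maximal atypical special subvarieties and split by period dimension.} Write $\HL(S,\VV^\otimes)_{\atyp} = \HL(S,\VV^\otimes)_{\atyp,\pos} \cup \HL(S,\VV^\otimes)_{\atyp,0}$, the union of atypical special subvarieties of positive, resp. zero, period dimension. Both are stable under taking special subvarieties containing a given one, so it suffices to bound the maximal elements in each piece. The first piece is exactly part (a) of the main results: by the geometric Zilber--Pink theorem quoted in the introduction, the maximal atypical special subvarieties of positive period dimension arise in a finite number of families, and combined with the o-minimal/definability input (\cite[Theorem 1.3]{BKT}, \cite[Section 8]{KO}) and the algebraicity of $S^\sing_\VV$ one upgrades ``finitely many families'' to ``finitely many maximal atypical special subvarieties of positive period dimension''. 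So the entire difficulty is concentrated in $\HL(S,\VV^\otimes)_{\atyp,0}$.

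\textbf{Step 2: The zero-period-dimension case — the real obstacle.} A special subvariety $Z$ of zero period dimension satisfies $\Phi(Z^\an) = \{t\}$ for a single point $t \in \Gamma\backslash D$, so $Z$ is a connected component of a fibre $\Phi^{-1}(t)$ whose associated Hodge subdatum $(\G',D')$ has $D'$ of dimension $0$, i.e. $\G'$ is contained in a stabilizer of a Hodge structure (a ``CM-type'' point for the relevant part of the datum). Atypicality here is entirely governed by the singular-locus clause of \Cref{atypical}, since the codimension inequality is typically automatic for high-codimension fibres; so one must control components of fibres of $\Phi$ that lie in $S^\sing_\VV$, or more precisely control \emph{which} such special points can be ``exceptional''. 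The plan would be: (i) use that $S^\sing_\VV$ is algebraic to stratify $S$; (ii) on each stratum, realize the zero-period-dimension special points as the Hodge locus of an auxiliary VHS of smaller period dimension (the normal bundle / deformation-theoretic VHS along $S^\sing_\VV$), and induct on $\dim \Phi(S^\an)$; (iii) the base case $\dim\Phi(S^\an)=0$ is trivial, and at each inductive step invoke part (a) for the positive-period-dimension contributions of the auxiliary VHS. The hard part — and the reason the paper excludes this case — is that there is no known functional-transcendence or arithmetic input controlling CM-type points of a period map when the period map itself is constant on the relevant locus: the Ax--Schanuel and Pila--Wilkie techniques that power the positive-dimensional case degenerate, and one would essentially need an effective height bound on the zero-dimensional special points, which is genuinely open (it is an André--Oort-type statement for general Hodge varieties, not just Shimura varieties). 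I would therefore present \Cref{main conj} as established for $\HL(S,\VV^\otimes)_{\atyp}$ modulo the zero-period-dimension locus, with the full statement remaining conjectural pending an André--Oort input for arbitrary Mumford--Tate domains.
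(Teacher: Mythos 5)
You are right to recognize that \Cref{main conj} is stated in the paper as a conjecture and is \emph{not} proved there: the authors explicitly restrict their theorems to the atypical Hodge locus of positive period dimension, and say they can't say anything about the atypical locus of zero period dimension. Your overall reading is therefore correct. However, the two concrete assertions you make along the way are both inaccurate, and together they mislocate where the genuine difficulty sits.

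First, Step 1 overstates what the paper proves. You claim that the geometric Zilber--Pink theorem (\Cref{geometricZP}) plus definability of $\Phi$ upgrades to ``finitely many maximal atypical special subvarieties of positive period dimension'', so that the entire difficulty is in $\HL(S,\VV^\otimes)_{\atyp,0}$. That is not what \Cref{geometricZP} gives. Its conclusion is a dichotomy: for a component $Z$ of the Zariski closure of $\HL(S,\VV^\otimes)_{\pos,\atyp}$, either (a) $Z$ is a maximal atypical special subvariety, or (b) the generic Hodge datum of $Z$ is a product and $Z$ contains a Zariski-dense set of atypical special subvarieties \emph{of zero period dimension} for the second factor $\Phi''$. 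Branch (b) is not ruled out in general; it is excluded only under additional hypotheses, most notably the level $\geq 3$ hypothesis of \Cref{level criterion}/\Cref{corol}. So even the positive-period-dimension part of \Cref{main conj} is open in low level, and it is open precisely because it reduces to a zero-period-dimension problem for an auxiliary period map. The difficulty is therefore not ``concentrated'' in $\HL(S,\VV^\otimes)_{\atyp,0}$ with the positive-dimensional part done; the two are entangled.

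Second, your characterization of atypicality for zero-period-dimension special subvarieties is incorrect. For a special $Z$ with $\Phi(Z^\an)=\{t\}$ and generic Hodge datum $(\G_Z,D_{G_Z})$, the domain $D_{G_Z}$ is the Mumford--Tate domain of the Hodge structure at $t$, not necessarily a point: the period map is constant on $Z$, but $\G_Z$ need not be abelian. Unwinding \Cref{atypical}, the codimension inequality for such a $Z$ reads $\dim\Phi(S^\an)+\dim D_{G_Z}<\dim\Gamma\backslash D$, i.e. the expected dimension of the intersection of $\Phi(S^\an)$ with $\Gamma_{\G_Z}\backslash D_{G_Z}$ is negative. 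This is neither ``typically automatic'' nor governed by the singular-locus clause: it depends sensitively on the size of $\G_Z$. For a CM point ($\G_Z$ a torus, $D_{G_Z}$ a point) the inequality holds as soon as $\Phi$ is not dominant, so such points are almost always atypical; for points with large Mumford--Tate group the inequality can fail. So the zero-period-dimension part of \Cref{main conj} is a genuine Andr\'e--Oort-type statement about all exceptional Hodge-theoretic points, not merely a statement about the singular locus. Your closing remark that a new arithmetic input (an Andr\'e--Oort-type bound for general Mumford--Tate domains) is needed is accurate, but the route you sketch to get there is based on a misreading of \Cref{atypical} and of the scope of \Cref{geometricZP}.
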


\begin{rmk}
\Cref{main conj} is stronger than the Zilber-Pink conjecture for
$\ZZ$VHS originally proposed in \cite[Conjecture 1.9]{klin}. Indeed,
our \Cref{atypical} of atypical special subvarieties is more general
(and simpler) than the one considered in {\em op. cit.}. We refer to \Cref{newzpsection} for a
detailed comparison with \cite{klin}.
\end{rmk}

\begin{conj}[Density of the typical Hodge locus] \label{conj-typical}
Let $\VV$ be a polarizable $\ZZ$VHS on a smooth connected complex quasi-projective variety
$S$. If 
$\HL(S, \VV^\otimes)_\typ$ is not empty then it is dense (for the analytic topology) in
$S$.
\end{conj}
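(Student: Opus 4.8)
The plan is to start from one typical special subvariety $Z_0=\Phi^{-1}(\Gamma'\backslash D')^0$ with generic Hodge datum $(\G',D')$, and to manufacture from it enough further typical special subvarieties to fill $S$ densely. Replacing $S$ by a finite \'etale cover and $(\G,D)$ by the generic Hodge datum of $\VV$, we may assume the period map $\Phi:S^{\an}\to\Gamma\backslash D$ of \eqref{period} is defined, proper and generically immersive; fix $\tilde s_0$ in the universal cover $\tilde S$ over a point $s_0\in Z_0$, with image $x_0:=\tilde\Phi(\tilde s_0)\in D'$, and write $Y:=\tilde\Phi(\tilde S)\subset D$, $\tilde Z_0$ for the component of the preimage of $Z_0$ containing $\tilde s_0$. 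If $Z_0$ has zero period dimension, then the codimension clause of \Cref{atypical} forces $\codim_{\Gamma\backslash D}\Phi(S^{\an})=0$, so $\Phi$ is an open map; the special points of $\Gamma\backslash D$ then lie in $\HL(S,\VV^\otimes)_\typ$ and are dense, and their preimages are dense in $S$. So assume from now on that $\dim\Phi(Z_0)=\delta\ge1$.

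For the main case two ingredients are needed: \textbf{(i)} real approximation for $\G$, which guarantees that the set of $\G(\QQ)$-conjugates of $(\G',D')$ lying close to $(\G',D')$ is dense among the $\G(\RR)$-conjugates of $(\G',D')$ close to it (one reduces to the situation where this holds, the $\RR$-anisotropic part of the centre of $\G$ acting trivially on $D$); and \textbf{(ii)} the \emph{persistence of proper intersections}: if a complex analytic set $Y\subset D$ meets a sub-period-domain $D''$ properly at a point (i.e. in a local component of the expected dimension), then the same holds, near that point, for every $\G(\RR)$-translate of $D''$ close to $D''$. Now the codimension clause of \Cref{atypical}, applied to the typical $Z_0$, says precisely that the component $\tilde\Phi(\tilde Z_0)$ of $Y\cap D'$ through $x_0$ has the expected dimension $\delta=\dim Y+\dim D'-\dim D$, i.e. $Y$ meets $D'$ properly at $x_0$. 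By \textbf{(ii)}, $Y$ then meets $g_0D'$ properly at any $y\in Y$ close to $x_0$, for any $g_0$ close to $1$ with $y\in g_0D'$ — and such a $g_0$ exists whenever $y$ is close to $x_0$. Perturbing $g_0$ to a rational $q\in\G(\QQ)$ close to $g_0$ (using \textbf{(i)}) and applying \textbf{(ii)} once more, $Y\cap qD'$ stays non-empty near $y$; let $Z_q\subset S$ be the corresponding component of $\Phi^{-1}(\Gamma\backslash\Gamma qD')$. Since $(q\G'q^{-1},qD')$ is a $\G(\QQ)$-conjugate of $(\G',D')$, $Z_q$ is a special subvariety; it has the expected codimension by \textbf{(ii)}, and it is not contained in the closed locus $S^{\sing}_\VV$ (which does not contain $Z_0$, hence not its nearby translates), so $Z_q$ is a \emph{typical} special subvariety passing through a point close to $s_0$. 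Letting $y$ and $q$ vary, the sets $Y\cap qD'$ become dense in a neighbourhood of $x_0$ in $Y$; since $\tilde\Phi$ is a local homeomorphism onto $Y$, the union $\bigcup_q Z_q$ is analytically dense in a neighbourhood of $s_0$. As $s_0\in Z_0$ was arbitrary, $\HL(S,\VV^\otimes)_\typ$ is analytically dense in an open neighbourhood of $Z_0$ in $S$.

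It remains to propagate this local density to all of $S$, and this is the step I expect to be the main obstacle. By the above, $\HL(S,\VV^\otimes)_\typ$ is analytically dense in an open neighbourhood of \emph{every} typical special subvariety — in particular of every $Z_q$ just constructed, which is itself typical, irreducible and of positive dimension $\delta$, distinct from $Z_0$, so that these loci do spread through $S$; iterating the argument at points of the newly produced $Z_q$'s lying outside the previous region enlarges the open set on which $\HL(S,\VV^\otimes)_\typ$ is dense, and since $S$ is connected one expects to exhaust it. The difficulty is one of uniformity: one must bound from below the sizes of the successive neighbourhoods so as to genuinely cover $S$, and a soft argument provides no such bound — the mechanism behind the local density is local to an already known intersection point, while the density in $D$ of the family of translates $\{qD'\}_q$ by itself never meets the ``thin'' horizontal set $Y=\tilde\Phi(\tilde S)$. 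This is presumably why, unconditionally, one can only reach the weaker conclusion that the \emph{full} Hodge locus $\HL(S,\VV^\otimes)$ is analytically dense. A clean way to bypass the uniformity issue is to grant the full Zilber--Pink conjecture \Cref{main conj}: it makes $\HL(S,\VV^\otimes)_\atyp$ a finite union of atypical special subvarieties, hence a proper closed algebraic — in particular analytically nowhere dense — subset of $S$, so that the density of $\HL(S,\VV^\otimes)=\HL(S,\VV^\otimes)_\atyp\coprod\HL(S,\VV^\otimes)_\typ$ (when its typical part is non-empty) forces $\HL(S,\VV^\otimes)_\typ$ itself to be analytically dense.
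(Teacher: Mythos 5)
\Cref{conj-typical} is stated in the paper as a \emph{conjecture} and is not proven there: what the paper establishes unconditionally is the strictly weaker \Cref{typicallocus}, asserting density of the \emph{full} Hodge locus $\HL(S,\VV^\otimes)$ whenever $\HL(S,\VV^\otimes)_\typ\ne\emptyset$. You are aware of this, and your closing observation is correct: \Cref{typicallocus} together with the strong Zilber--Pink \Cref{main conj} (which would make $\HL(S,\VV^\otimes)_\atyp$ a finite union of strict closed algebraic subvarieties, hence nowhere dense) does imply the conjecture. So the substantive comparison is between your local perturbation argument and the paper's route to \Cref{typicallocus}.

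There is a genuine error in your treatment of the zero-period-dimension case. If $\Phi(Z_0^{\an})$ is a point, typicality reads
$\dim\Gamma\backslash D \;=\; \codim_{\Gamma\backslash D}\Phi(Z_0^{\an}) \;=\; \codim_{\Gamma\backslash D}\Phi(S^{\an}) + \codim_{\Gamma\backslash D}\Gamma'\backslash D'$,
i.e.\ $\dim\Phi(S^{\an}) + \dim D' = \dim\Gamma\backslash D$. This forces $\Phi$ to be open only when $\dim D'=0$. But $D'$ is the Mumford--Tate domain of the generic Mumford--Tate group of $Z_0$, which is perfectly capable of being positive dimensional even when $\Phi(Z_0^{\an})$ is a single point: for example an isolated point of $\Phi(S^{\an})\cap\Gamma'\backslash D'$ when the expected intersection dimension is zero. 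In that situation $\codim_{\Gamma\backslash D}\Phi(S^{\an})=\dim D'>0$ and your branch of the argument collapses.

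More importantly, your local perturbation step yields density only near $Z_0$, and — as you note yourself — you have no mechanism to propagate this to all of $S$; as written the proposal does not even prove \Cref{typicallocus}. This is exactly where the paper's approach in \Cref{sectionty1} is stronger. There one introduces $\mathcal{C}_H=G/N_G(H)$ and $\Pi_H=\{(x,H'):x(\bS)\subset H'\}$, lifts $\Phi(S^{\an})$ to $\tilde S\subset\Pi_H$, and considers the real-analytic map $f:\tilde S\to\mathcal{C}_H$. The typicality of $Z$ at a Hodge-generic point of $\Phi(Z^{\an})$ is translated, through a Lie-algebra computation on the graded pieces of $\fg_\CC$, into pointwise submersivity of $f$ there. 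Because $f$ is real analytic, submersivity at \emph{one} point already forces $f$ to be a submersion outside a nowhere-dense real-analytic subset $B\subset\tilde S$. Then for \emph{every} open $\Omega\subset S^{\an}$ the image of $\pi^{-1}(\Omega)\setminus B$ under $f$ is open in $\mathcal{C}_H$, hence meets the dense set of $\G(\Q)_+$-conjugates of $\HH$, producing a Hodge-locus point in $\Omega$. So rather than iterating a local perturbation and hoping the neighbourhoods exhaust $S$, the paper converts typicality of $Z$ into a \emph{global} open-mapping statement for $f$, which simultaneously handles the zero-period-dimension case and gives density in every open subset of $S$ at once; what it does \emph{not} control (and what your attempt also cannot control) is whether the resulting Hodge-locus points are typical, which is why only \Cref{typicallocus}, and not the conjecture, is obtained unconditionally.
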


\Cref{main conj} and \Cref{conj-typical} imply immediately the
following, which clarifies the possible alternatives in \Cref{KO}:
\begin{conj} \label{second-main}
  Let $\VV$ be a polarizable $\ZZ$VHS on an irreducible smooth
  quasi-projective variety $S$. If $\HL(S, \VV^\otimes)_\typ$ is
  empty then $\HL(S, \VV^\otimes)$ is algebraic; otherwise
  $\HL(S, \VV^\otimes)_\typ$ is analytically dense in $S$.
  \end{conj}

In view of \Cref{conj-typical} and \Cref{second-main}, we are led
to the:

\begin{question} \label{question}
  Is there a simple combinatorial criterion on $(\G, D)$ for deciding
  whether $\HL(S, \VV)_\typ$ is empty (which would imply, following
  \Cref{second-main}, that $\HL(S, \VV^\otimes)$ is algebraic)?
\end{question}

\section{Special subvarieties as (a)typical intersection loci: results and applications}
In this paper we prove \Cref{main conj} and \Cref{conj-typical} (and
thus \Cref{second-main}) except for the description of
the atypical locus of zero period dimension, see
\Cref{geometricZP} and \Cref{typicallocus} respectively. We also provide a powerful criterion for $\HL(S,
\VV^\otimes)_\typ$ to be empty, thus answering \Cref{question}, see
\Cref{level criterion}. Our \Cref{corol} that ``in most cases'' $\HL(S,
\VV^\otimes)_\fpos$ is algebraic then follows from \Cref{geometricZP} and
\Cref{level criterion}. Let us notice that all these results are independent of
 \Cref{KO}, which we do not use and of which we prove a variant, see \Cref{KO1}.

\subsection{On the atypical Hodge locus}

Our first main result establishes the \emph{geometric part} of
\Cref{main conj}: we prove that the maximal atypical special subvarieties of positive
period dimension arise in a finite number of families whose geometry
we control. We cannot say anything on the atypical locus of zero
period dimension, for which different ideas are certainly needed.

\begin{theor}[Geometric Zilber--Pink]\label{geometricZP}
Let $\VV$ be a polarizable $\ZZ$VHS on a smooth connected complex quasi-projective variety
$S$. Let $Z$ be an irreducible component of the Zariski closure
of $\HL(S, \VV^{\otimes})_{\pos, \atyp}:= \HL(S,
\VV^\otimes)_{\pos} \cap \HL(S,
\VV^\otimes)_{\atyp}$ in $S$. Then:

\begin{itemize}
  \item[(a)] Either $Z$
    is a maximal atypical special subvariety;
    \item[(b)] 
Or the generic adjoint Hodge datum $(\G_Z^\ad, D_{G_{Z}})$
  decomposes as a non-trivial product $(\G', D') \times (\G'', D'')$,
  inducing (after replacing $S$ by a finite \'{e}tale cover if
  necessary) $$
\Phi_{|Z^\an}= (\Phi', \Phi''): Z^\an \to  \Gamma_{\G_{Z}}\backslash D_{G_{Z}}= \Gamma'
\backslash D' \times  \Gamma'' \backslash D''\subset \Gamma \backslash
D
\;\;,$$
such that $Z$ contains a Zariski-dense
set of atypical special subvarieties for $\Phi''$ of zero
period dimension. Moreover $Z$ is Hodge generic in a special subvariety $$\Phi^{-1}(
\Gamma_{\G_{Z}}\backslash D_{G_{Z}})^0$$ of $S$ for $\Phi$ which is typical. 
 \end{itemize}
\end{theor}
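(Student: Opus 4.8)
The plan is to prove \Cref{geometricZP} by combining the Ax–Schanuel theorem for variations of Hodge structure with a Noetherian/compactness argument on the Zariski closure, following the general strategy for "geometric" Zilber–Pink statements. Let $\Sigma$ denote the Zariski closure of $\HL(S,\VV^\otimes)_{\pos,\atyp}$ and let $Z$ be one of its irreducible components. The starting observation is that $Z$ is covered, up to a strict closed algebraic subset, by a Zariski-dense union of atypical special subvarieties $W$ of positive period dimension; enlarging $S$ we may assume $\Phi$ proper. For each such $W$ write $(\G_W,D_W)$ for its generic Hodge datum. The first step is to pass to the generic Hodge datum $(\G_Z,D_Z)$ of $Z$ itself and to show that each $W$ of positive period dimension sitting inside $Z$ and atypical \emph{in $S$} is, after this reduction, controlled by how its Hodge datum sits inside $(\G_Z,D_Z)$: either $W$ is already atypical \emph{for the restricted period map $\Phi_{|Z^\an}$}, or the atypicality of $W$ in $S$ is entirely "inherited" from the way $Z$ itself sits atypically in $S$.

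The second step is the heart of the argument and uses functional transcendence. By Ax–Schanuel for $\ZZ$VHS (applied to $\Phi$, or to $\Phi_{|Z^\an}$), any atypical intersection component forces the existence of a proper weak Mumford–Tate subdatum through which $\Phi_{|Z^\an}$ factors on a positive-dimensional subvariety, or else the generic datum of $Z$ must itself be a proper subdatum of $(\G,D)$ cut out with a specified defect. One then runs the familiar dichotomy: either the family of the $W$'s has bounded complexity — in which case finitely many maximal special subvarieties suffice and a maximal one equals $Z$, giving case (a) — or the defect is unbounded along the family, which by Ax–Schanuel can only happen if the $W$'s accumulate inside a single proper weak Mumford–Tate sub-locus, namely $\Phi^{-1}(\Gamma_{\G_Z}\backslash D_{G_Z})^0$. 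In the latter situation the generic adjoint datum $(\G_Z^\ad,D_{G_Z})$ must split as a nontrivial product $(\G',D')\times(\G'',D'')$, because the only way infinitely many pairwise-incomparable special subvarieties can be atypical inside a \emph{typical} ambient special subvariety is through a product phenomenon: the atypicality is concentrated in one factor, where it degenerates to zero period dimension, while $Z$ intersects the other factor typically. This is exactly case (b): $Z$ contains a Zariski-dense set of atypical special subvarieties for $\Phi''$ of zero period dimension, and $Z$ is Hodge generic in the typical special subvariety $\Phi^{-1}(\Gamma_{\G_Z}\backslash D_{G_Z})^0$.

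The third step is bookkeeping: one must check that the special subvariety $\Phi^{-1}(\Gamma_{\G_Z}\backslash D_{G_Z})^0$ is genuinely \emph{typical} in the sense of \Cref{atypical} — i.e. that $Z$ is not singular for $\VV$ in the relevant component and that the codimensions add up — which follows because we removed the singular locus $S^{\sing}_\VV$ (itself algebraic of strictly smaller dimension, so not all of $Z$) and because any failure of the codimension equality for $(\G_Z,D_Z)$ would push $Z$ into a strictly smaller ambient special subvariety, contradicting that $(\G_Z,D_Z)$ is the \emph{generic} datum of $Z$. Finiteness of the number of families in case (b) then follows from the finiteness of the possible generic data $(\G_Z,D_Z)$ up to conjugacy (there are only countably many, but the Noetherianity of $S$ and the algebraicity of each $\Phi^{-1}(\Gamma_{\G_Z}\backslash D_{G_Z})^0$ cut this down to finitely many that actually occur as components of $\Sigma$).

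I expect the main obstacle to be the second step: precisely controlling, via Ax–Schanuel, the transition between "$W$ atypical in $S$" and "$W$ atypical in $Z$ for $\Phi_{|Z}$", and then extracting the product decomposition of $(\G_Z^\ad,D_{G_Z})$ from the accumulation of zero-period-dimension atypical loci. The delicate point is that the special subvarieties $W$ may have small but positive period dimension, and one must argue that the "positive-dimensional part" of each $W$ moves in a bounded family (hence is absorbed into finitely many maximal special subvarieties, or into $Z$ itself via the product structure), while only the zero-period-dimensional directions are allowed to proliferate. This requires a careful use of the monodromy/Mumford–Tate group constraints together with the properness of $\Phi$ and the algebraicity of $S^{\sing}_\VV$, and is where the genuinely new input beyond \Cref{KO} lies.
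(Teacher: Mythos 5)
Your proposal identifies the right high-level ingredients (Ax--Schanuel, a dichotomy between ``$Z$ is a maximal atypical special'' and ``$Z$ sits in a product with the atypicality concentrated in one factor''), and it correctly suspects that the difficult point is controlling how the atypical special subvarieties $W \subset Z$ relate to the geometry of $Z$. But there is a genuine gap at precisely the place you flag as ``the second step'': the proposal never supplies a mechanism by which Ax--Schanuel yields the required finiteness, and without that the argument does not close.

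The paper's proof does not run on ``bounded vs.\ unbounded complexity.'' Its engine is an o-minimality argument that you do not mention. Concretely, for each (of the finitely many up to conjugacy) weak Hodge subdata $(\MM,D_M)$ one forms a \emph{definable} set $\Sigma^m(\MM,D_M)$ of $H$-conjugates of $M$ that meet the (definable) image $\tilde\Phi(\mathcal F)$ of a definable fundamental set in dimension $\geq m$. Ax--Schanuel is then used not to ``factor $\Phi_{|Z}$ through a proper subdatum'' but to show that every element of $\Sigma^m(\MM,D_M)$ projects to a genuine weakly special subvariety of $\Gamma\backslash D$, of which there are only countably many; a definable countable set is finite. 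This finiteness (Proposition \ref{mainpropwithas}) is the engine of the whole proof, and it does not follow from Noetherianity of $S$ or from the algebraicity of the Hodge locus, contrary to what your final paragraph suggests. Noetherianity only bounds the number of irreducible components of the Zariski closure; it says nothing about whether a single such component is a union of finitely many or infinitely many atypical specials, which is exactly what must be decided.

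The second missing ingredient is the mechanism producing the product decomposition in case (b). You assert that ``the only way infinitely many pairwise-incomparable special subvarieties can be atypical inside a typical ambient special subvariety is through a product phenomenon,'' but that is the conclusion, not an argument. The paper extracts the product from a concrete group-theoretic dichotomy: with $(\MM,D_M)$ fixed as above and $\LL:=\mathbf Z_{\G^\ad}(\MM^\ad)$, either $L=\LL(\R)^+$ is compact --- in which case $\mathcal E_m(\MM,D_M)$ is finite and each element is a maximal atypical special, giving (a) --- or $L$ is non-compact, in which case $(\MM^\ad\cdot\LL,\, Z_G(M)\cdot D_M)$ is a \emph{strict} Hodge subdatum, and $Z$ lies in the corresponding special subvariety $Z'$, which by maximality of $Z$ is typical and of product type, giving (b). You should also note that the paper handles the period dimensions $j<m$ by a downward induction on $j$ (via the refined definable sets $\Sigma_2(\cdot)^j$), and the singular locus via a separate reduction (Lemma \ref{below}); neither step is optional, and neither appears in your outline. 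Finally, the paper proves the weakly special version \Cref{geometricZPweakly} first and obtains \Cref{geometricZP} by the formal substitution of ``special/atypical'' for ``weakly special/weakly atypical,'' a structural choice worth adopting since atypical implies weakly atypical (\Cref{HatypisMatyp}) and Ax--Schanuel naturally outputs weakly special subvarieties.
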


\begin{rmk}
\Cref{main conj}, which also takes into account the atypical special
subvarieties of zero period dimension, predicts that the branch (b) of the
  alternative in the conclusion of \Cref{geometricZP} never occurs.
\end{rmk}

In \Cref{Shimura} and \Cref{modular} we discuss special examples
of \Cref{geometricZP} of particular interest.

\subsection{A criterion for the typical Hodge locus to be empty} \label{criterion}

The following result answers~\Cref{question}:
\begin{theor} \label{level criterion}
Let $\VV$ be a polarizable $\ZZ$VHS
on a smooth connected complex quasi-projective variety 
$S$, with generic Hodge datum $(\G, D)$ and algebraic monodromy group
$\HH$. If $\VV$ is of level at
 least $3$ then $\HL(S,\VV^\otimes)_{\typ} = \emptyset$ (and thus $\HL(S,
 \VV^\otimes)= \HL(S, \VV^\otimes)_\atyp$).
\end{theor}

\noindent
In level~$2$ we prove that strict typical special
subvarieties do satisfy some geometric constraints:

\begin{prop} \label{level22}
Let $\VV$ be a polarizable $\ZZ$VHS
on a smooth connected complex quasi-projective variety 
$S$ of level $2$. Suppose that the Lie algebra $\fg^\ad$ of its
adjoint generic Mumford-Tate group is simple. If $Z \subset S$ is a typical special subvariety
then its adjoint generic Mumford-Tate group $\G^{\ad}_Z$ is simple.
\end{prop}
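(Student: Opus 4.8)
The plan is to prove Proposition 1.18 by contradiction, using the level-2 hypothesis to severely constrain the possible Hodge data, and then invoking the typicality of $Z$ to eliminate the remaining cases. So suppose $Z$ is a typical special subvariety whose adjoint generic Mumford--Tate group $\G^\ad_Z$ decomposes as a non-trivial product $\G' \times \G''$ of $\QQ$-groups (possibly after passing to a finite étale cover of $S$, so the period map $\Phi$ on $Z$ factors as $(\Phi', \Phi'')$ into the corresponding product of Hodge varieties). First I would record the structure of a level-$2$ polarizable $\ZZ$VHS in terms of the generic Mumford--Tate Lie algebra $\fg = \Lie \G_Z$: the Hodge decomposition of $\fg$ has the form $\fg = \fg^{-2}\oplus\fg^{-1}\oplus\fg^0\oplus\fg^1\oplus\fg^2$, and since $\fg^\ad$ is simple the whole of $\fg^\ad$ is generated by the $\fg^{-1}$ and $\fg^1$ pieces (together with brackets); in particular a level-$2$ simple Hodge--Lie algebra has at most one irreducible simple factor carrying a non-trivial Hodge structure of weight $\pm 2$, and the remaining simple factors are of \emph{Shimura type} (level one, i.e.\ $\fg^{\pm2} = 0$ on that factor). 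The key structural input, which I expect is proved elsewhere in the paper (and which I would cite), is that if $\fg^\ad$ is simple of level $2$ then in the decomposition $\fg^\ad = \fg' \oplus \fg''$ of any Hodge sub-datum at most one of the factors has level $2$ and the other has level $\le 1$; more relevantly, the level of $\G_Z$ is also $\le 2$ (sub-VHS of a level-$2$ VHS has level $\le 2$).

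Next, the heart of the argument is a dimension/codimension count using the very definition of typicality. Since $Z = \Phi^{-1}(\Gamma_Z\backslash D_Z)^0$ is typical, we have
\[
\codim_{\Gamma\backslash D}\Phi(Z^\an) = \codim_{\Gamma\backslash D}\Phi(S^\an) + \codim_{\Gamma\backslash D}\Gamma_Z\backslash D_Z,
\]
and $Z$ is not contained in $S^\sing_\VV$. I would then use the product decomposition $D_Z = D' \times D''$: one of the two factors, say the one corresponding to $\G''$, can be taken to be of Shimura type (level $\le 1$), because $\fg^\ad$ simple of level $2$ forces all but (at most) one simple factor of $\fg^\ad_Z$ to be of level $\le 1$ — and since $\G''$ is a proper factor we can arrange it to collect precisely the level-$\le 1$ part, \emph{unless} $\G^\ad_Z$ is itself already simple of level $2$, which is the desired conclusion. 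So assume for contradiction $\G''\neq 1$ is of Shimura type (or even just has strictly smaller level behaviour). Now I would invoke exactly the mechanism of \Cref{geometricZP}: a typical special subvariety $Z$ with such a product structure, sitting inside the level-$2$ ambient $\VV$, must — because its $\Phi''$-factor lands in a Shimura-type (so level $1$) Hodge variety while the period map on the ambient $S$ has level exactly $2$ — fail to meet $\Gamma_Z\backslash D_Z$ with the expected codimension along $\Phi(Z)$: the sub-datum $(\G'', D'')$ is "too special" in the sense that within a level-$2$ datum the locus cutting out a level-$1$ factor is not a complete intersection of the generic type. More precisely, I would compare $\codim_{\Gamma\backslash D}\Gamma_Z\backslash D_Z$ with the codimension of the level-$1$ sub-Hodge-variety and show the typicality equality cannot hold because the horizontal tangent space of $D$ at a point of $\Phi(Z^\an)$, when intersected with $T(\Gamma_Z\backslash D_Z)$, has dimension strictly larger than the generic expectation — this is precisely where level $\ge 3$ would force emptiness (as in \Cref{level criterion}) and level $2$ forces simplicity.

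The main obstacle, and the step I would spend the most care on, is making the codimension count honest: the period map $\Phi$ is only an immersion on a dense open set, $\Phi(S^\an)$ may be singular, and the relevant codimensions are those of the \emph{images} $\Phi(Z^\an)$, $\Phi(S^\an)$ inside $\Gamma\backslash D$, not of the abstract Hodge varieties. Since $Z$ is typical it avoids $S^\sing_\VV$, so $\Phi$ is nice enough near a Hodge-generic point $z \in Z^\an$, and I would work infinitesimally there: let $\fg = \fg_z^{-1}\oplus\cdots$ be the Hodge decomposition induced by $z$, let $T_z\Phi(S^\an) \subset \fg^{-1}_z = T_{\Phi(z)}(\Gamma\backslash D)_{\text{hor}}$ be the image of the Kodaira--Spencer map, and let $\fg_Z = \fg' \oplus \fg''$ with $\fg''$ of level $\le 1$. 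The typicality equality becomes a linear-algebra identity about these subspaces of $\fg^{-1}_z$, and the level-$2$ structure (the Jacobi identity constraining how $\fg^{-1}$ pieces bracket into $\fg^{-2}$) is what makes it fail unless $\fg'' = 0$. I would isolate this as a lemma in pure Hodge--Lie-algebra terms — "a level-$2$ simple $\fg^\ad$ admits no typical Hodge sub-datum with a non-trivial level-$\le1$ product factor" — and deduce the proposition. A secondary subtlety is the passage to a finite étale cover to split the period map as a product, which is harmless for all the codimension statements since étale covers are local isomorphisms, but should be stated cleanly at the outset.
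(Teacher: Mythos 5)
Your strategy has the right outline (argue by contradiction, reduce to a pure Hodge--Lie-algebra statement) but the proposal stops short of the two steps that actually make it work, and the infinitesimal analysis you sketch looks in the wrong graded piece.

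\medskip

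\textbf{The missing extraction from typicality.} You phrase the typicality condition as ``a linear-algebra identity about subspaces of $\fg^{-1}_z$.'' This is not where the constraint lives: $\fg^{-1}$ is precisely the graded piece where typical intersections are \emph{allowed} in low level. The actual consequence of typicality exploited in the paper is an equality in the graded pieces of degree $\ge 2$: since $Z$ is typical it is in particular not horizontally atypical (\Cref{rmk1zp}~(c)), and subtracting the horizontal codimension equality from the full codimension equality, using the termwise inclusions $\fg_Z^{-k}\subset\fg^{-k}$, yields
\[
\fg_Z^{-k} \;=\; \fg^{-k}\qquad\text{for all }|k|\ge 2.
\]
In level $2$ this just says $\fg_Z^{-2}=\fg^{-2}$. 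Your proposal never isolates this identity; it is the indispensable bridge from ``typical'' to the Lie theory.

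\medskip

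\textbf{The missing Lie-algebra argument.} You name the right lemma in spirit (``a level-$2$ simple $\fg^\ad$ admits no typical Hodge sub-datum with a non-trivial product factor'') but do not prove it, and the sketch you give (Jacobi identity on $\fg^{-1}$-brackets into $\fg^{-2}$) is not what forces the conclusion. The paper's \Cref{nosubalgebraprod} argues as follows, given a Hodge--Lie subalgebra $\fg'\subset\fg$ of a simple, level-$2$, level-$1$-generated $\fg$ with $\fg'^{-2}=\fg^{-2}$: if $\fg'=\fg_1'\oplus\fg_2'$ were a non-trivial decomposition, simplicity of $\fg^{-2}$ as a $\fg^0$-module rules out both summands contributing to degree $-2$, so WLOG $\fg_2'^{-2}=0$; then $\fg_2'$ centralizes $\fg_1'\supset \fg^{-2}\oplus[\fg^{-2},\fg^2]\oplus\fg^2$ and hence lies in the normalizer of this subalgebra; that normalizer has trivial $(-1)$-piece because every irreducible $\fg^0$-submodule of $\fg^{-1}$ brackets non-trivially with $\fg^2$ (this uses generation in level $1$); hence $\fg_2'^{-1}=0$ and $\fg_2'=\fg_2'^0$ is compact, which is impossible for (a factor of) a monodromy Hodge--Lie algebra. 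That normalizer argument is the actual content, and your proposal does not contain it. The appeal to ``horizontal tangent space has dimension strictly larger than the generic expectation'' does not cash out into a concrete contradiction without passing through $\fg^{-2}$.
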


\subsection{On the algebraicity of the Hodge locus} \label{mainsectionintro}
As we will show in \Cref{proof corol}, \Cref{corol} easily follows 
  from \Cref{geometricZP} and \Cref{level criterion}.
Notice that the full \Cref{main conj}, allied with \Cref{level
  criterion}, would imply in the same way:

\begin{conj} \label{algebraicity}
Let $\VV$ be a polarizable $\ZZ$VHS
on a smooth connected complex quasi-projective variety 
$S$. If $\VV$ is of level at least $3$ then $\HL(S, \VV^\otimes)$ is
algebraic.
\end{conj}

\begin{rmk}
  Notice that \Cref{algebraicity} implies, if $S$ is defined over
  $\Qbar$, that $S$ contains a Hodge generic $\overline{\QQ}$-point, as predicted, at least if $\VV$ is of geometric origin (in the sense of \Cref{section0}),
  by the conjecture that Hodge classes are absolute Hodge classes.
\end{rmk}

\medskip
In level two, the typical locus $\HL(S, \VV^\otimes)_\typ$ is not
necessarily empty, but, thanks to \Cref{level22}, we can still prove:

\begin{theor}\label{level2}
Let $\VV$ be a polarizable $\ZZ$VHS
on a smooth connected complex quasi-projective variety 
$S$ with generic Mumford-Tate datum $(\G, D)$. Suppose that $\VV$ has
level $2$ and that $\G^\ad$ is simple. Then $\HL(S,
\VV^\otimes)_{\pos, \atyp}$ is algebraic.
\end{theor}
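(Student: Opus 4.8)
The plan is to combine the two main structural results already established—\Cref{geometricZP} (Geometric Zilber--Pink) and \Cref{level22} (the simplicity constraint in level $2$)—to control $\HL(S,\VV^\otimes)_{\pos,\atyp}$ when $\VV$ has level $2$ and $\G^\ad$ is simple. The desired conclusion is that this locus is algebraic, i.e.\ that the set of maximal atypical special subvarieties of positive period dimension is finite. First I would pass to the Zariski closure $\overline{\HL(S,\VV^\otimes)_{\pos,\atyp}}$ in $S$ and fix an irreducible component $Z$ of it. By \Cref{geometricZP}, $Z$ falls into one of two cases: either (a) $Z$ is itself a maximal atypical special subvariety, or (b) the generic adjoint Hodge datum $(\G_Z^\ad, D_{G_Z})$ splits nontrivially as $(\G',D')\times(\G'',D'')$, with $Z$ Hodge generic in a \emph{typical} special subvariety $\Phi^{-1}(\Gamma_{\G_Z}\backslash D_{G_Z})^0$ and containing a Zariski-dense set of atypical special subvarieties for $\Phi''$ of zero period dimension.

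The heart of the argument is to rule out case (b) under our hypotheses. In case (b) the generic adjoint Mumford--Tate group $\G_Z^\ad$ of $Z$ is non-simple (it has at least the two factors $\G'$ and $\G''$). On the other hand $Z$ is, by the last sentence of \Cref{geometricZP}(b), Hodge generic in a typical special subvariety of $S$, so applying \Cref{level22} to that typical special subvariety—which is legitimate since $\VV$ has level $2$ and $\fg^\ad$ is simple by assumption—forces its adjoint generic Mumford--Tate group, hence $\G_Z^\ad$, to be simple. This contradicts the non-simplicity coming from the product decomposition. (One should double-check that the level of $\VV$ restricted to a special subvariety is bounded by $2$, or rather that \Cref{level22} genuinely applies to $Z$ viewed inside the ambient $S$; the level is a property of the generic Mumford--Tate datum $(\G,D)$ of $\VV$ on $S$, and a special subvariety of $S$ carries a sub-datum, so its level is at most $2$. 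The key point is that $Z$ being \emph{typical} as a special subvariety of $S$ puts us exactly in the situation of \Cref{level22}.) Therefore only case (a) can occur: every irreducible component $Z$ of $\overline{\HL(S,\VV^\otimes)_{\pos,\atyp}}$ is a maximal atypical special subvariety.

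It then remains to deduce that there are only finitely many such $Z$. This is a Noetherianity argument: $\overline{\HL(S,\VV^\otimes)_{\pos,\atyp}}$ is a closed algebraic subvariety of $S$, hence has finitely many irreducible components; since each component is one of the maximal atypical special subvarieties and, conversely, every maximal atypical special subvariety of positive period dimension appearing in $\HL(S,\VV^\otimes)_{\pos,\atyp}$ is contained in—hence equal to—one of these components, the set of such maximal special subvarieties is finite. Consequently $\HL(S,\VV^\otimes)_{\pos,\atyp}$ is a finite union of special subvarieties, in particular algebraic.

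The main obstacle I anticipate is the verification that \Cref{level22} can legitimately be invoked for the typical special subvariety produced in \Cref{geometricZP}(b): one must be careful that ``level $2$'' and ``$\fg^\ad$ simple'' are hypotheses on the ambient $\VV$ over $S$, while \Cref{level22} is phrased for a $\ZZ$VHS whose adjoint generic Mumford--Tate Lie algebra is simple and which has level $2$—so the cleanest route is to apply \Cref{level22} directly to $\VV$ over $S$ with $Z$ the typical special subvariety in question, concluding that $\G_Z^\ad$ is simple, and only then derive the contradiction with the product decomposition of case (b). A secondary subtlety is ensuring that the Zariski-dense family of zero-period-dimension atypical special subvarieties in case (b) does not force $Z$ itself to have zero period dimension; but $Z$ lies in the \emph{positive} period dimension part of the Hodge locus by construction, so this is automatic. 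Once these checks are in place the proof is short.
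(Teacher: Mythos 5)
Your proposal is correct and follows essentially the same route as the paper: the paper's proof of \Cref{level2} (in \Cref{sectionthmss}) is stated very tersely, but it is exactly your argument — invoke \Cref{geometricZP}, rule out alternative (b) because the typical special subvariety $\Phi^{-1}(\Gamma_{\G_Z}\backslash D_{G_Z})^0$ would have non-simple adjoint Mumford--Tate group $\G_Z^\ad=\G'\times\G''$, contradicting \Cref{level22} (resp.\ its monodromic variant \Cref{level criterion 12}), and conclude via Noetherianity that only finitely many maximal atypical special subvarieties of positive period dimension occur. Your parenthetical worry about whether \Cref{level22} ``legitimately applies'' is unfounded but harmless: that proposition is already phrased about a typical special subvariety $Z\subset S$ for the ambient $(\VV,S)$ with level $2$ and $\fg^\ad$ simple, which is precisely the situation at hand, and you ultimately resolve this correctly.
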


\begin{cor} \label{hypersurface2}
Let $\PP^{N}_\CC$ be the projective space parametrising the hypersurfaces $X$ of
$\PP^{3}_\CC$ of degree $d$ (with $N=\frac{(d+3)(d+2)(d+1)}{6}-1$). Let $U_{2, d} \subset \PP^{N}_\CC$ be
the Zariski-open subset parametrising the smooth
hypersurfaces $X$ and let 
$
\VV \to U_{2,d}
$
be the $\Z$VHS corresponding to the primitive cohomology
$H^2(X, \ZZ)_\prim$. If $d\geq 5$ then $\HL(U_{2,d},
\VV^\otimes)_{\pos, \atyp} \subset U_{2,d}$ is algebraic.
\end{cor}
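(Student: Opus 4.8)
The plan is to deduce the statement as a direct application of \Cref{level2} to the base $S = U_{2,d}$ and the polarizable $\ZZ$VHS $\VV$ given by $H^2(X,\ZZ)_\prim$; all the work lies in checking the hypotheses of that theorem. The base $U_{2,d}$, being Zariski-open in $\PP^{N}_\CC$, is smooth connected quasi-projective, and the cup-product pairing on primitive degree-two cohomology polarizes $\VV$ (up to sign). So it remains to verify that (i) $\VV$ has level exactly $2$, and (ii) its adjoint generic Mumford--Tate group $\G^\ad$ is simple. Both follow from a single computation, that of the generic Mumford--Tate group, which I carry out first.

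First I would establish that $\G^{\der} = \SO(H^2(X,\QQ)_\prim)$. The monodromy of the universal family of smooth degree-$d$ surfaces in $\PP^3_\CC$ is generated, up to finite index, by the Picard--Lefschetz reflections in vanishing cycles; by Beauville's theorem on the monodromy of universal families of hypersurfaces its image is of finite index in $\mathrm{O}(H^2(X,\ZZ)_\prim)$, hence (for $d \geq 4$, when this group is infinite) Zariski-dense in $\mathrm{O}(H^2(X,\QQ)_\prim)$; so the connected algebraic monodromy group is $\SO(H^2(X,\QQ)_\prim)$. By Andr\'e's theorem the connected algebraic monodromy group is normal in $\G^{\der}$, while $\G^{\der}$ is connected and preserves the polarization form, hence $\G^{\der} \subseteq \SO(H^2(X,\QQ)_\prim)$. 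Therefore $\G^{\der} = \SO(H^2(X,\QQ)_\prim)$ and $\G^\ad = \mathrm{PSO}(H^2(X,\QQ)_\prim)$.

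With this, (ii) is immediate: $\dim_\QQ H^2(X,\QQ)_\prim = d^3 - 4d^2 + 6d - 3 \geq 52 > 4$, so $\mathrm{PSO}(H^2(X,\QQ)_\prim)$ is absolutely almost simple. For (i) I would use that $\fg^\ad_\CC \cong \mathfrak{so}(H^2(X,\CC)_\prim)$, carrying the weight-$0$ Hodge structure induced by the weight-$2$ Hodge structure on $H^2(X,\CC)_\prim$. As the weight is $2$, this Hodge structure has no graded piece of bidegree $(p,-p)$ with $|p| \geq 3$; and under $\mathfrak{so}(H^2(X,\CC)_\prim) \cong \wedge^2 H^2(X,\CC)_\prim$ the bidegree-$(2,-2)$ piece of $\fg^\ad_\CC$ corresponds to $\wedge^2 H^{2,0}(X)_\prim$. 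By Griffiths' description of the Hodge filtration of a hypersurface, $h^{2,0}(X)_\prim = \binom{d-1}{3}$ (the geometric genus $p_g(X)$), which is $\geq 2$ precisely for $d \geq 5$; hence for $d \geq 5$ one has $\wedge^2 H^{2,0}(X)_\prim \neq 0$ and $\VV$ has level exactly $2$. \Cref{level2} now applies and shows that $\HL(U_{2,d},\VV^\otimes)_{\pos,\atyp}$ is algebraic.

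The only genuinely non-formal ingredient is the identification of the generic Mumford--Tate group, which rests on Beauville's monodromy computation together with Andr\'e's normality theorem; the remaining steps are routine Hodge-theoretic bookkeeping. I would also note that the hypothesis $d \geq 5$ is sharp for this approach: it delimits exactly the range in which $h^{2,0}(X)_\prim \geq 2$, equivalently in which $\VV$ has level $2$. For $d = 4$ the family of quartic $K3$ surfaces has $h^{2,0}(X)_\prim = 1$, so $\VV$ has level $1$ and \Cref{level2} does not apply (indeed the $K3$ period domain is Hermitian, putting this case in a quite different regime).
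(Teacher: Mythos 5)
Your proof is correct and follows essentially the same route as the paper: use Beauville's monodromy theorem (plus the inclusion $\HH \subset \G^{\der} \subset \SO$) to pin down $\G^{\ad}$ as the simple group $\mathrm{PSO}(H^2(X,\QQ)_\prim)$, check that the level is $2$ precisely when $h^{2,0}=\binom{d-1}{3}\geq 2$, i.e.\ $d\geq 5$, and then invoke \Cref{level2}. Your write-up spells out the Hodge--Lie-algebra computation of the level via $\wedge^2 H^{2,0}$, which the paper leaves implicit, but there is no difference in substance.
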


\subsection{On the typical Hodge locus in level one and two}\label{typical}
Let us now turn to the typical Hodge locus $\HL(S,\VV^\otimes)_{\typ}
$. 

\subsubsection{The ``all or nothing'' principle}
In view of \Cref{level criterion}, $\HL(S,\VV^\otimes)_{\typ}
$ is non-empty only if $\VV$ is
of level one or two. In that case its behaviour is predicted by
\Cref{conj-typical}. In this direction we obtain our last main result:

\begin{theor}\label{typicallocus}
Let $\VV$ be a polarizable $\ZZ$VHS
on a smooth connected complex quasi-projective variety 
$S$. If the typical Hodge locus $\HL(S,\VV^\otimes)_{\typ} $ is non-empty then
$\HL(S,\VV^\otimes)$ is analytically (hence Zariski) dense in
$S$.
\end{theor}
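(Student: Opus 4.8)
The plan is to prove Theorem \ref{typicallocus} by reducing it to the density of the \emph{typical} Hodge locus already captured by a single typical special subvariety, then spreading that density around using the homogeneity of Hodge varieties and the functional-transcendence input underlying the description of special subvarieties as intersection loci. Concretely, suppose $Z_0 \subset S$ is a strict typical special subvariety, with generic Hodge subdatum $(\G_0, D_0) \subset (\G, D)$, so that $Z_0^{\an}$ is an analytic component of $\Phi^{-1}(\Gamma_0\backslash D_0)$ and the intersection of $\Phi(S^{\an})$ with $\Gamma_0\backslash D_0$ is \emph{typical}, i.e.\ $\codim_{\Gamma\backslash D}\Phi(Z_0^{\an}) = \codim_{\Gamma\backslash D}\Phi(S^{\an}) + \codim_{\Gamma\backslash D}(\Gamma_0\backslash D_0)$, and moreover $Z_0$ is not singular for $\VV$, i.e.\ $\Phi(Z_0^{\an})$ meets $\Phi(S^{\an})$ at a smooth point. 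The point is that typicality is an \emph{open} condition: near a smooth point $p = \Phi(z_0)$ of $\Phi(S^{\an})$ lying on $\Gamma_0\backslash D_0$, any small $\G(\RR)^+$-translate (or $\G(\QQ)$-translate) of the sub-Hodge-variety $\Gamma_0\backslash D_0$ will again meet $\Phi(S^{\an})$ transversally in the expected dimension, hence will again cut out a typical special subvariety provided the translate is itself a Hodge subvariety (i.e.\ defined over $\QQ$ in the appropriate sense). The key is therefore to produce \emph{enough} rational translates of $(\G_0,D_0)$ whose associated Hodge subvarieties sweep out a dense subset of $S^{\an}$.

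**The main steps, in order.** First, I would fix $z_0 \in Z_0(\CC)$ Hodge-generic in $Z_0$ and not in $S^{\sing}_\VV$, and work in a simply connected neighbourhood, lifting $\Phi$ to $\tilde\Phi \colon \tilde U \to D$. The condition that $Z_0$ is typical and nonsingular means $\tilde\Phi(\tilde U)$ and the sub-period-domain $D_0 \subset D$ through $\tilde\Phi(z_0)$ intersect transversally with the expected codimension at that point. Second — the crux — I would move $D_0$ by elements of $\G(\QQ)$: because $\G(\QQ)$ is dense in $\G(\RR)$ and $\G(\RR)^+$ acts transitively on $D$, for any point $d \in D$ sufficiently close to $\tilde\Phi(z_0)$ there is $g \in \G(\QQ)$ with $g \cdot \tilde\Phi(z_0)$ arbitrarily close to $d$; then $g\cdot D_0$ is again the period domain of a Hodge subdatum $(g\G_0 g^{-1}, gD_0)$ of $(\G,D)$, and by openness of transversality the corresponding analytic component of $\Phi^{-1}(\Gamma_{g\G_0g^{-1}}\backslash gD_0)$ through (a point near) $z_0$ is still a \emph{typical} special subvariety, in particular nonempty of positive period dimension. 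As $d$ ranges over a neighbourhood of $\tilde\Phi(z_0)$ in $D$ and one varies the base point of $S$, these translated special subvarieties fill out a set whose $\Phi$-image contains an open subset of $\Phi(S^{\an})$, hence whose union is analytically dense in a neighbourhood of $z_0$ in $S$; a standard monodromy/irreducibility argument (using that $S$ is connected and $\Phi$ is proper, and that the set of points of $S$ admitting such a translated typical special subvariety through them is both analytically open and closed, or at least contains an open dense set that is $\Gamma$-orbit stable) then propagates density from the neighbourhood of $z_0$ to all of $S$. Third, I would invoke Theorem \ref{CDK} to conclude that the analytic density of $\HL(S,\VV^\otimes)$ upgrades to Zariski density: a countable union of proper algebraic subvarieties that is analytically dense cannot be contained in any proper algebraic subvariety, so its Zariski closure is all of $S$.

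**The hard part** is making the "openness of typicality plus rational approximation" argument genuinely produce special subvarieties rather than just approximate ones: a $\G(\QQ)$-translate $g D_0$ is an honest Hodge subvariety, but one must check that $g\cdot\tilde\Phi(z_0)$ (or rather some nearby point of $\tilde\Phi(\tilde U)$) actually \emph{lies} on $gD_0$ — it does not automatically, since translating $D_0$ also translates the base point. The fix is to move the pair: replace $z_0$ by a nearby $z$ with $\tilde\Phi(z) \in g D_0$, which is possible precisely because of the transversality at $z_0$ (the map $\tilde U \times \G(\RR) \to D$, $(z,g)\mapsto g^{-1}\tilde\Phi(z)$, is a submersion onto a neighbourhood of $\tilde\Phi(z_0)$ when restricted appropriately, so the preimage of $D_0$ is a submanifold surjecting onto a neighbourhood in $\G(\RR)$, and rational points of $\G$ are dense there). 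One then has to ensure the resulting intersection component has the expected dimension and is not contained in $S^{\sing}_\VV$; both follow from semicontinuity of fibre dimension and the fact that $S^{\sing}_\VV$ is a proper (closed algebraic) subvariety, so a generic choice of $g \in \G(\QQ)$ avoids it. I expect the bookkeeping of Hodge-datum versus Mumford–Tate-datum, and the reduction to the adjoint group where the product decomposition \eqref{period} lives, to require care but no new ideas; the genuinely delicate point is the simultaneous perturbation of base point and sub-datum described above. Finally, the passage from "dense near $z_0$" to "dense in $S$" should use that the property of having a typical special subvariety of positive period dimension through a point is stable under the (analytic, even algebraic) monodromy action and hence its validity on a nonempty open set forces it on a $\Gamma$-invariant, hence (by irreducibility of $S$) dense, subset.
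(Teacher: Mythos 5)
Your overall strategy is the correct one and essentially the one the paper uses (both being adaptations of Chai's argument for $\mathcal{A}_g$): typicality at one smooth Hodge-generic point gives a transversality/submersion condition, and density of $\G(\QQ)_+$ in $\G(\RR)^+$ then sweeps the Hodge locus densely through $S$. You also correctly identify the crux: one cannot just translate $D_0$ by $g\in\G(\QQ)$, one must simultaneously move the base point, and your map $(z,g)\mapsto g^{-1}\tilde\Phi(z)$ is morally the paper's map $f:\tilde S\to \mathcal{C}_H=G/N_G(H)$, $(x,H')\mapsto H'$, obtained from the incidence variety $\Pi_H=\{(x,H')\in D\times\mathcal{C}_H : x(\bS)\subset H'\}$.

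However, your local-to-global step is wrong as stated, and this is where the paper is more careful. You claim "the set of points of $S$ admitting such a translated typical special subvariety through them is both analytically open and closed"; it is not open --- the Hodge locus is a countable union of lower-dimensional analytic subvarieties, so membership in it is never an open condition. The open condition is not membership in the Hodge locus but \emph{submersiveness of $f$}, and the correct way to globalize is not via a monodromy/open-closed dichotomy on $S$ but via real analyticity: the non-submersive locus of the real-analytic map $f:\tilde S\to \mathcal{C}_H$ is a real-analytic subset, so once one exhibits a single point of submersiveness it is a \emph{nowhere-dense} subset $B$ of $\tilde S$. Density in every open $\Omega\subset S^\an$ then follows directly from Chai's topological lemma \cite[Proposition 1]{chai}: $f$ restricted to $\pi^{-1}(\Omega)\setminus B$ is open, its image in $\mathcal{C}_H$ is open, hence meets the dense set of $\G(\QQ)_+$-conjugates of $H$, producing a point of $\Omega$ lying on a special subvariety. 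There is no separate "propagate from a neighbourhood of $z_0$ to all of $S$" step. A second, smaller point: you assert transversality from typicality without computation; the paper makes this precise by computing $\operatorname{im}(df)$ in terms of the Hodge-Lie grading of $\fg$ and deriving the two conditions $\fh'^{-k}=\fg^{-k}$ for $k\geq 2$ and $U+\fh'^{-1}=\fg^{-1}$, which is exactly where the level-$\leq 2$ phenomenon and the comparison with the horizontal tangent bundle enter; that computation is not optional bookkeeping, it is what links \Cref{atypical} to the geometry of the period domain.
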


Here and elsewhere, by analytically dense, we mean dense in
  every non-empty open subset of $S$ (for the usual Euclidean
  topology).

\begin{rmk}
Notice that, in \Cref{typicallocus}, we also treat
the typical Hodge locus of zero period dimension. 
\end{rmk}

\begin{rmk}\label{rmkchai}
\Cref{typicallocus} is new even for $S$ a subvariety of a Shimura variety. Its proof is inspired by the arguments of Colombo-Pirola
\cite{colombopriola}, Izadi \cite{izadi} and Chai
\cite{chai} in that case.
\end{rmk}

\noi
In the same way that \Cref{main conj} and \Cref{conj-typical}
 imply \Cref{second-main}, we deduce from 
\Cref{geometricZP}, \Cref{level criterion} and \Cref{typicallocus} the
following:

\begin{cor} \label{KO1}
Let $\VV$ be a polarizable $\ZZ$VHS on a smooth connected complex quasi-projective variety
$S$. Then either $\HL(S, \VV^{\otimes})_{\fpos}$ is a
finite union of maximal atypical special subvarieties of $S$ for
$\VV$, hence is algebraic; or the level of $\VV$ is one or two,
$\HL(S, \VV^{\otimes})_{\typ} \not = \emptyset$, and $\HL(S, \VV^\otimes)$ is analytically dense in $S$.
\end{cor}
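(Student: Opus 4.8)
The plan is to deduce this statement purely formally from the three main results already stated, namely \Cref{geometricZP}, \Cref{level criterion}, and \Cref{typicallocus}, combined with the basic set-theoretic decomposition $\HL(S,\VV^\otimes)=\HL(S,\VV^\otimes)_{\atyp}\coprod\HL(S,\VV^\otimes)_{\typ}$ and the trivial inclusions $\HL(S,\VV^\otimes)_{\fpos}\subset\HL(S,\VV^\otimes)_{\pos}\subset\HL(S,\VV^\otimes)$. First I would split into cases according to whether the typical Hodge locus is empty.

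\emph{Case 1: $\HL(S,\VV^\otimes)_{\typ}=\emptyset$.} I claim $\HL(S,\VV^\otimes)_{\fpos}$ is a finite union of maximal atypical special subvarieties. Since every special subvariety factorwise of positive period dimension has positive period dimension, $\HL(S,\VV^\otimes)_{\fpos}\subset\HL(S,\VV^\otimes)_{\pos}$, and since the typical locus is empty, every special subvariety occurring is atypical, so $\HL(S,\VV^\otimes)_{\fpos}\subset\HL(S,\VV^\otimes)_{\pos,\atyp}$. Now apply \Cref{geometricZP} to each irreducible component $Z$ of the Zariski closure of $\HL(S,\VV^\otimes)_{\pos,\atyp}$: branch (b) of that theorem forces $Z$ to be Hodge generic in a \emph{typical} special subvariety of positive period dimension (it contains a Zariski-dense set of special subvarieties for $\Phi''$), contradicting $\HL(S,\VV^\otimes)_{\typ}=\emptyset$; hence branch (a) always holds and each such $Z$ is itself a maximal atypical special subvariety. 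As \Cref{geometricZP} also asserts that there are only finitely many such components, $\HL(S,\VV^\otimes)_{\pos,\atyp}$ — and a fortiori $\HL(S,\VV^\otimes)_{\fpos}$, being a union of some of the special subvarieties sitting inside these finitely many maximal ones — is a finite union of maximal atypical special subvarieties, hence algebraic.

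\emph{Case 2: $\HL(S,\VV^\otimes)_{\typ}\neq\emptyset$.} By \Cref{level criterion}, if $\VV$ had level at least $3$ the typical locus would be empty; hence the level of $\VV$ is one or two. By \Cref{typicallocus}, the non-emptiness of the typical Hodge locus implies that $\HL(S,\VV^\otimes)$ is analytically dense in $S$. This gives exactly the second alternative of the statement.

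Finally I would combine the two cases: either we are in Case 1 and the first alternative holds, or we are in Case 2 and the second alternative holds, with the two being mutually exclusive (analytic density of $\HL(S,\VV^\otimes)$ in the irreducible $S$ is incompatible with $\HL(S,\VV^\otimes)_{\fpos}$ being a proper algebraic subvariety only if $\HL(S,\VV^\otimes)\neq\HL(S,\VV^\otimes)_{\fpos}$, which is automatic here since the density comes from the typical locus which has empty intersection with the atypical $\HL_{\fpos}$ of Case 1). This completes the proof. The only genuine subtlety — and the step I would double-check most carefully — is the deduction in Case 1 that $\HL(S,\VV^\otimes)_{\fpos}$ is not merely \emph{contained in} finitely many maximal atypical special subvarieties but actually \emph{equals} a finite union of such; this requires noting that any special subvariety $W$ factorwise of positive period dimension contained in a maximal atypical $Z$ is itself atypical (atypicality is inherited by special subvarieties lying in an atypical one, or follows from maximality considerations), so that $W$ contributes to, rather than refines, the finite list.
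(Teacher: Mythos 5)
Your proof matches the paper's intended derivation: Corollary~\ref{KO1} is stated with only the remark that it is deduced from \Cref{geometricZP}, \Cref{level criterion} and \Cref{typicallocus} in the same way that \Cref{main conj} and \Cref{conj-typical} imply \Cref{second-main}, and your dichotomy on whether $\HL(S,\VV^\otimes)_\typ$ is empty is exactly that deduction. Case~2 is immediate as you write it.

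In Case~1 the argument is essentially right, but the ``subtlety'' you flag at the end is not quite the relevant one. That a factorwise-positive-period-dimension special subvariety $W$ sitting inside a maximal atypical component $Z$ is itself atypical needs no argument: once $\HL(S,\VV^\otimes)_\typ=\emptyset$, every strict special subvariety is atypical by definition. What actually needs to be checked, to upgrade the inclusion $\HL(S,\VV^\otimes)_\fpos\subset Z_1\cup\dots\cup Z_k$ to an equality with a finite union of maximal atypical special subvarieties, is that each $Z_i$ that meets $\HL(S,\VV^\otimes)_\fpos$ is itself \emph{factorwise} of positive period dimension, so that $Z_i$ genuinely belongs to $\HL(S,\VV^\otimes)_\fpos$. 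This is a one-line observation: if $W\subset Z_i$ is special and factorwise of positive period dimension, then $\Phi(W^\an)\subset\Phi(Z_i^\an)$, so the projection of $\Phi(Z_i^\an)$ to each simple factor $\Gamma_j\backslash D_j$ dominates that of $\Phi(W^\an)$ and therefore has positive dimension; since $Z_i$ is atypical it is a strict special subvariety, hence lies in $\HL(S,\VV^\otimes)_\fpos$, and the union of such $Z_i$ equals $\HL(S,\VV^\otimes)_\fpos$. The final remarks on mutual exclusivity are unnecessary: the statement is a disjunction, and it suffices that exactly one case applies depending on whether $\HL(S,\VV^\otimes)_\typ$ vanishes.
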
 

\begin{rmk}
  Notice that \Cref{KO1} clarifies the alternative of \Cref{KO}, and
  strengthens it, as density is now in the analytic topology rather
  than in the (weaker) Zariski topology. However it does not recover
  exactly \Cref{KO}, as in \Cref{KO1}
  the density could come from typical varieties which are not
  factorwise of positive dimension.
  \end{rmk}


\subsubsection{Examples of dense typical Hodge locus in level $1$}\label{Picard}
Given a $\ZZ$VHS on $S$ of level one or two, can we deduce from
the generic Mumford-Tate datum $(\G, D)$ whether or not the typical
Hodge locus is dense in $S$? Here we discuss a simple
example (related to \Cref{rmkchai}) where density holds and, in
the next paragraph, use it to present an arithmetic application
of independent interest.

\begin{theor}[Chai + $\epsilon$]\label{typicaldiv}
Let $(\mathbf{G},X)$ be a Shimura datum containing a Shimura sub-datum
$(\mathbf{H},X_H)$ such that $X_H$ is one dimensional and the normaliser of 
$H$ in $G$ is $H$. Assume moreover that $\mathbf{G}$ is
absolutely simple. Let $S \subset
\Gamma \backslash X$ be an irreducible subvariety of codimension one.
Then the Hodge locus of $S$ is dense.  
 \end{theor}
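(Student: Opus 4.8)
The plan is to exhibit infinitely many, in fact an analytically dense set of, translates of the one-dimensional Shimura subdatum $(\mathbf H, X_H)$ meeting $S$, and then to argue each such intersection contributes a positive-dimensional special subvariety to the Hodge locus of $S$. Concretely, let $D := \Gamma\backslash X$, of dimension $n+1$ say, so $\dim S = n$. For $g\in G(\mathbb Q)$ the Hecke translate $T_g := \Gamma\backslash\Gamma g X_H$ is a special curve in $D$; since $(\mathbf H, X_H)$ generates (the normaliser of $\mathbf H$ being $\mathbf H$ and $\mathbf G$ absolutely simple), each $T_g$ is \emph{strict} special, and a dimension count shows that $S\cap T_g$, when nonempty, is a typical intersection of expected dimension $0$ inside $D$ — but crucially, each such intersection point already lies in $\HL(S,\VV^\otimes)$, since $T_g\cap S \subset T_g$ forces extra Hodge tensors at those points only if $T_g$ is not weakly special; to get \emph{positive-dimensional} special subvarieties of $S$, one instead intersects $S$ with the special subvarieties $\Gamma\backslash\Gamma g (\mathbf H', X_{H'})$ where $(\mathbf H', X_{H'})$ runs over Shimura subdata of dimension $\geq 2$ containing a conjugate of $X_H$, which is where the actual density argument lives.

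The heart of the argument, following Chai's method (cf. \Cref{rmkchai}), is the following: I would fix a point $s_0\in S$ and a small analytic neighbourhood, and consider the union $\bigcup_{g} T_g$ of all Hecke translates of $X_H$. Because $\mathbf G$ is absolutely simple and $X_H$ is nontrivial, the $G(\mathbb R)^+$-orbit of (a lift of) $X_H$ sweeps out all of $X$; the key quantitative input is that this family of curves is, in an appropriate sense, equidistributed, so that the set of $T_g$ that meet any given open subset $U\subset D$ is infinite and their union is dense in $U$. Intersecting with $S$: since $S$ has codimension one and the $T_g$ move in a family covering $D$, a generic $T_g$ meets $S$ transversally in finitely many points, and as $g$ varies these intersection points become analytically dense in $S$. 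The upgrade from ``density of special \emph{points}'' to ``density of the Hodge locus'' (which requires positive-dimensional pieces only if one insists, but \Cref{typicallocus} and the statement here speak of the full Hodge locus $\HL(S)$, which includes special points) then follows: each intersection point $s\in S\cap T_g$ is a point of $\HL(S,\VV^\otimes)$ because $\VV_s$ has Mumford--Tate group contained in a conjugate of $\mathbf H$, strictly smaller than the generic Mumford--Tate group of $\VV|_S$ (which, $S$ being of codimension one and not contained in a proper special subvariety in general, is the restriction of $\mathbf G$). Hence $\HL(S,\VV^\otimes)$ contains an analytically dense set of points, so it is analytically dense in $S$.

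I would organise the write-up as: (i) reduce to the case where the generic Mumford--Tate group of $\VV|_S$ equals $\mathbf G$ (if $S$ were contained in a proper special subvariety $S'$, then $S$ has codimension one \emph{in} $S'$ or the statement is easier, and one replaces $(\mathbf G, X)$ by the datum of $S'$ — here one needs that $S'$ still contains a conjugate of $(\mathbf H, X_H)$, or handles the degenerate case separately); (ii) set up the family of Hecke translates $\{T_g\}_{g\in G(\mathbb Q)}$ of the curve $X_H$ and record that each is strict special with generic Mumford--Tate datum a conjugate of $(\mathbf H, X_H)$, using the normaliser hypothesis; (iii) prove the density of $\bigcup_g (T_g\cap S)$ in $S$ by the mixing/equidistribution argument à la Chai, or alternatively by a monodromy argument showing the incidence variety $\{(g,s): s\in S\cap T_g\}$ dominates $S$; (iv) conclude that $\HL(S,\VV^\otimes)\supset \bigcup_g(T_g\cap S)$ is analytically dense. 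The main obstacle is step (iii): making the equidistribution of the translated curves precise enough to conclude analytic (not merely Zariski) density, and in particular handling the fact that $S$ is an arbitrary codimension-one subvariety, not a subvariety in ``general position'' — one must rule out that all the $T_g$ meeting a neighbourhood of $s_0$ do so along a proper analytic subset of $S$, which is exactly where the absolute simplicity of $\mathbf G$ (hence the non-existence of a nontrivial factor through which the family could degenerate) is used in an essential way.
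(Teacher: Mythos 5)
Your plan correctly identifies the reference (Chai) and the shape of the argument, but the write-up stops precisely where the actual content of the proof lies: you flag step (iii), the analytic density of $\bigcup_g(T_g\cap S)$ in $S$, as ``the main obstacle'' and leave it unresolved. The paper does not redo an equidistribution argument from scratch; it invokes Chai's theorem (recalled as \Cref{chairecall} in the paper) as a black box, which says there is an explicit combinatorial constant $c=c(G,X,H)$ such that $\HL(S,\mathbf H)$ is analytically dense in $S$ whenever $\codim_{\Gamma\backslash X} S\leq c$. The entire remaining work is then the elementary but decisive \Cref{lemmachai}: showing that under the three hypotheses --- $\mathbf G$ absolutely simple, $\dim X_H=1$, and $N_G(H)=H$ --- one has $c\geq 1$.

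That lemma is the idea your proposal is missing. Concretely, Chai's constant is the largest integer such that for every complex subspace $W\subset\fg^{-1}$ of codimension at most $c$, some $k\in K_h$ satisfies $\fg^{-1}=W+\Ad k(\fh^{-1})$. The hypothesis $\dim X_H=1$ gives $\dim_\CC\fh^{-1}=1$; the hypothesis that $\mathbf G$ is absolutely simple makes $\fg^{-1}$ an irreducible $K_h$-module; therefore the $\CC$-span of $\bigcup_{k\in K_h}\Ad k(\fh^{-1})$ is a nonzero $K_h$-submodule and hence equals $\fg^{-1}$. Thus for any codimension-one $W$, some translate $\Ad k_0(\fh^{-1})$ (a line) is not contained in $W$, and since $\fh^{-1}$ is a single line this already gives $\fg^{-1}=W+\Ad k_0(\fh^{-1})$, i.e.\ $c\geq1$. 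This is where each of your three hypotheses is used in an essential and very specific way, and it is exactly the mechanism that rules out your worry that ``all the $T_g$ meeting a neighbourhood of $s_0$ do so along a proper analytic subset of $S$.'' Without this linear-algebra step, or an explicit replacement for Chai's criterion, your proposal is a strategy outline rather than a proof. (Your first paragraph's digression about passing to higher-dimensional subdata $(\mathbf H',X_{H'})$ to obtain positive-dimensional special subvarieties is also unnecessary: the statement is about the full Hodge locus, and zero-dimensional special subvarieties suffice, as your second paragraph correctly notes.)
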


 \begin{rmk}\label{specailagrmk}
Regarding the Shimura variety parametrising $g$-dimensional principally polarised abelian varieties
$
\mathcal{A}_g= \Sh_{\mathbf{Sp}_{2g}(\Z)} ( \mathbf{GSp}_{2g},\mathbb{H}_g)$,
we recall that its dimension is $\frac{g(g+1)}{2}$ and the biggest dimension of the (strict) special subvarieties is 
$
\frac{g(g-1)}{2}+1$, 
realised for instance by $\mathcal{A}_{g-1}\times
\mathcal{A}_{1}\subset \mathcal{A}_g$. It follows that the typical
locus of any subvariety of $\mathcal{A}_{g}$ of dimension smaller than
$g-1$
is empty. Let $S\subset \mathcal{A}_g$, be a (closed, not
necessarily smooth) subvariety of dimension $q$,
one would expect that the typical Hodge locus is (analytically) dense in
$S$ if and only if $q\geq g-1$. 
 \end{rmk}

\begin{rmk}
We do not address here the question of whether the typical Hodge locus
is equidistributed in level $1$ or $2$. 
This interesting question has been
investigated in more details for the typical Hodge locus of zero
period dimension by Tayou and Tholozan in \cite{tayou, 2021arXiv210315717T}. In particular \Cref{typicaldiv} is also a
corollary of their work.
 See also the work of Koziarz and Maubon
\cite{km} and \cite[Proposition 3.1]{zbMATH06442355}.
\end{rmk}

 \subsubsection{An arithmetic application} \label{The curve}
In this section we present an arithmetic application of
\Cref{typicaldiv}, that was our motivation for studying such a problem.
 Mumford \cite{zbMATH03271259} shows that there exist
principally polarized abelian varieties $X$
of dimension $4$ having trivial endomorphism ring, that are not Hodge
generic in $\cA_4$ (they have an exceptional Hodge class in
$H^4(X^2, \ZZ)$). A question often attributed to Serre is to describe ``as
explicitly as possible'' such abelian varieties \emph{of Mumford's
  type}. The most satisfying way would be to show the existence of a smooth projective curve over $\overline{\QQ}$ of genus $4$,
whose Jacobian is of Mumford's type. Gross
\cite[Problem 1]{zbMATH01587304} asked the weaker question over
$\CC$. See also a related question of Oort 
\cite[Section 7]{zbMATH07128855} (which we will address at the very end of the paper, see indeed \Cref{oortrmk}). 
We also notice that, at the end of August 2021, F. Calegari has
proposed a polymath project to find an explicit example of a
Mumford 4-fold over $\Q$.

\begin{theor}\label{serrequestion}
 There exists a smooth projective curve $C/\Qbar$ of genus $4$ whose
 Jacobian has Mumford-Tate group isogenous to a $\Q$-form of the complex group $\Gm \times \Sl_2\times \Sl_2 \times \Sl_2$. 
\end{theor}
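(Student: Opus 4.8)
The plan is to deduce Theorem~\ref{serrequestion} from Theorem~\ref{typicaldiv} applied to a cleverly chosen codimension-one subvariety of a Shimura variety, combined with a descent-to-$\Qbar$ argument via the Andr\'e--Oort conjecture for $\cA_4$. First I would locate inside $\cA_4$ (or a suitable finite cover) the special subvariety associated to a Shimura sub-datum $(\mathbf{H}, X_H)$ whose generic fibre is an abelian fourfold of Mumford's type: here $\mathbf{H}^{\ad}$ should be the $\Q$-form of $\mathrm{PGL}_2\times\mathrm{PGL}_2\times\mathrm{PGL}_2$ coming from Mumford's construction via a quaternion algebra, acting on $H^1$ through the tensor product of the three standard representations, so that $X_H$ is one-dimensional; the Mumford-Tate group of the generic point is then the $\Q$-form of $\Gm\times\Sl_2\times\Sl_2\times\Sl_2$. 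I would check that this $(\mathbf{H},X_H)$ can be chosen so that $\mathbf{H}$ is absolutely simple of adjoint type and is its own normaliser in the relevant ambient $\mathbf{G}$, so that Theorem~\ref{typicaldiv} applies.

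Next I would realise $\cA_4$, or rather the Torelli locus, as the base of a $\ZZ$VHS: let $\mathcal{M}_4$ be the moduli of genus $4$ curves and $\VV$ the weight-one $\ZZ$VHS given by $H^1$ of the universal curve; the Torelli map $\mathcal{M}_4 \to \cA_4$ is generically injective with image of codimension one in $\cA_4$ (since $\dim \cA_4 = 10$ and $\dim \mathcal{M}_4 = 9$). Thus the closure $S = \overline{\mathcal{M}_4} \subset \cA_4$ is an irreducible subvariety of codimension one, precisely the hypothesis of Theorem~\ref{typicaldiv}. Applying that theorem, the Hodge locus of $S$ is analytically dense in $S$; in particular the locus of curves $C$ whose Jacobian has Mumford-Tate group contained in a conjugate of $\mathbf{H}$ — i.e. the intersection $S \cap (\text{image of } \Gamma_H\backslash X_H)$ — is non-empty, being one component of this Hodge locus that is forced to meet $S$ because $X_H$ is a one-dimensional sub-datum meeting a codimension-one subvariety typically (codimension $1 + (\dim X_H - \dim(S\cap\cdot)) $ bookkeeping in $\cA_4$). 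The key point extracted from Theorem~\ref{typicaldiv} is not just density but non-emptiness of this particular intersection, which gives a genus-$4$ curve over $\CC$ whose Jacobian is of Mumford's type.

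Then comes the descent to $\Qbar$. The variety $S\cap(\Gamma_H\backslash X_H)$ is an algebraic curve (one-dimensional, as $X_H$ is a curve and the intersection with a hypersurface is typically a curve), defined over $\Qbar$ since both $S$ and the special subvariety $\Gamma_H\backslash X_H$ are defined over $\Qbar$ (the latter by the theory of canonical models of Shimura varieties and the fact that special subvarieties are defined over number fields). A priori a generic point of this $\Qbar$-curve need not be a $\Qbar$-point, but one can argue as follows: were \emph{every} $\Qbar$-point of this curve to carry \emph{strictly more} Hodge classes than the generic Mumford's-type fibre — i.e. to be a CM point or to lie in a smaller special subvariety — then the $\Qbar$-points would be contained in a countable union of strict special subvarieties of the curve, hence (the curve being irreducible and one-dimensional) in a finite union of points; but the $\Qbar$-points are Zariski dense in the curve, a contradiction. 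Here is where the Andr\'e--Oort conjecture for $\cA_4$ (\cite[Theorem 1.3]{MR3744855}, or the weaker statements of \cite[Theorem 5.1]{MR3177266}, \cite[Theorem 1.3]{zbMATH06284346} for $g=4$) is used: it guarantees that the CM points on this curve are not Zariski dense unless the curve is itself special, which it is not since its generic Mumford-Tate group is the full $\Q$-form of $\Gm\times\Sl_2\times\Sl_2\times\Sl_2$ rather than a torus. Therefore some $\Qbar$-point of the curve has Jacobian with Mumford-Tate group exactly this $\Q$-form; as it lies in $\overline{\mathcal{M}_4}$ and is a smooth point of the Torelli image (one checks the relevant point is not in the boundary or the hyperelliptic/singular locus, or one perturbs slightly within the curve), it is the Jacobian of a smooth projective genus-$4$ curve $C/\Qbar$, as desired.

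The main obstacle I expect is the last descent step: passing from a \emph{complex} point with the right Mumford-Tate group to a \emph{$\Qbar$}-point with the right Mumford-Tate group. Density of the Hodge locus (Theorem~\ref{typicaldiv}) produces uncountably many complex examples, but to land on $\Qbar$ one genuinely needs that the ``Mumford locus'' inside the relevant $\Qbar$-curve is not swallowed by the countable union of smaller special subvarieties, and ruling this out is exactly an Andr\'e--Oort-type input for the curve $S\cap(\Gamma_H\backslash X_H)$ inside $\cA_4$ — which is why the genus $g=4$ case, where Andr\'e--Oort is known unconditionally, is the accessible one. A secondary technical point is verifying that the chosen curve actually meets the open Torelli locus (avoiding the hyperelliptic locus, the boundary of $\overline{\mathcal{M}_4}$, and the locus of products) rather than only its closure; this should follow from a dimension count or from choosing the Shimura sub-datum $(\mathbf{H},X_H)$ appropriately, but it requires care.
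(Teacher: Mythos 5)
Your overall strategy — Mumford's curve as the sub-Shimura datum $(\mathbf{H},X_H)$, the codimension-one Torelli locus $\mathcal{T}_4\subset\mathcal{A}_4$, \Cref{typicaldiv}, and Andr\'e--Oort for $\mathcal{A}_4$ — matches the paper's. But there is a concrete error in your dimension count that makes your descent argument collapse. You assert that $S\cap(\Gamma_H\backslash X_H)$ is ``an algebraic curve (one-dimensional, as $X_H$ is a curve and the intersection with a hypersurface is typically a curve)''. In fact $X_H$ is one-dimensional, $S=\mathcal{T}_4$ has codimension one in the $10$-dimensional $\mathcal{A}_4$, so a typical intersection has dimension $9+1-10=0$: it is a \emph{finite set of points}, not a curve. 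The entire third paragraph of your proposal — applying Andr\'e--Oort to the $\Qbar$-points of this putative curve and arguing that the CM points cannot be Zariski dense in it — has no content because there is no curve. Consequently the ``main obstacle'' you flag (passing from a complex point to a $\Qbar$-point on a one-dimensional locus) is not an obstacle at all once the dimensions are right: each Hecke translate $Y_n$ of Mumford's curve cuts $\mathcal{T}_4$ in a finite set, the Hecke translates, $\mathcal{T}_4$ and the Torelli map are all defined over $\Qbar$, so these finitely many intersection points are automatically $\Qbar$-points.

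The correct shape of the argument, which is what the paper does, is the following. Density from \Cref{typicaldiv} (via \Cref{thmfinalechai}) supplies infinitely many Hecke translates $Y_n$ of Mumford's curve $Y$ with $Y_n\cap\mathcal{T}_4^0\neq\emptyset$. For each point $P\in Y_n\cap\mathcal{T}_4^0$, since $Y_n$ is one-dimensional, either $\MT(P)$ is a torus (CM point) or $\MT(P)=\MT(Y_n)$ is of Mumford type. If for every $n$ all such $P$ were CM, then $\mathcal{T}_4$ would contain an analytically dense set of CM points; Andr\'e--Oort for $\mathcal{A}_4$ then forces $\mathcal{T}_4$ to be special, contradicting the fact that $\mathcal{T}_4$ is Hodge-generic and not special in $\mathcal{A}_4$. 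So some intersection point $P$ has $\MT(P)$ of Mumford type, and it lies in $\mathcal{T}_4^0$ and is defined over $\Qbar$. Note that Andr\'e--Oort is applied globally to $\mathcal{T}_4$, not to an intersection curve.

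A secondary inaccuracy: you claim to arrange that ``$\mathbf{H}$ is absolutely simple of adjoint type''. It is not and need not be: $\mathbf{H}^\ad$ in Mumford's construction is a $\Q$-simple but \emph{not} absolutely simple group (it becomes $\mathrm{PGL}_2^3$ over $\Qbar$). The absolute simplicity hypothesis in \Cref{typicaldiv} concerns the \emph{ambient} group $\mathbf{G}$ (here $\mathbf{Sp}_8$, which is absolutely simple), not the subgroup $\mathbf{H}$. What is needed of $\mathbf{H}$ is only $\dim X_H=1$ and $N_G(H)=H$, both of which Mumford's construction supplies.
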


\begin{rmk}
Actually our proof shows the existence of infinitely many such
curves. Finding explicit equations for such a curve remains an open
problem.
\end{rmk}

\begin{rmk}
A crucial input in the proof
is the Andr\'{e}--Oort conjecture for $\cA_4$, as
established in \cite[Theorem 1.3]{MR3744855}. For $g=4$,
  \cite[Theorem 5.1]{MR3177266} and \cite[Theorem 1.3]{zbMATH06284346}
  are actually enough, see \Cref{rmkao} for more details.
\end{rmk}

\subsection{Complements and applications}\label{complapp}

We conclude this section by discussing two applications of  
\Cref{geometricZP} of particular interest.

\subsubsection{The Shimura locus} \label{Shimura}
Recall that a $\ZZ$VHS $\VV$ is said \emph{of
Shimura type} if its generic Hodge datum $(\G, D)$ is a Shimura
datum, see \cite[Definition 3.7]{klin}. The $\ZZ$VHSs of Shimura type form the simplest class of $\ZZ$VHSs:
the target $\Gamma \backslash D$
of the period map (\ref{period0}) is a Shimura variety (a quasi
projective algebraic variety by \cite{MR0216035}) and the period
map $\Phi: S^{\an} \to \Gamma \backslash D$ is algebraic, see
\cite[Theorem 3.10]{zbMATH03394558}.

Given $\VV$ a polarizable $\ZZ$VHS on a smooth connected complex quasi-projective variety
$S$, a special subvariety $Z \subset S$ for $\VV$ is said to be \emph{of
Shimura type} if $\VV_{|Z}$ is of Shimura type, i.e. 
its generic Hodge datum $(\G', D')$ is a Shimura datum. In that case,
it is said to have \emph{dominant period map} if the algebraic period map $\Phi_{|Z^\an}: Z^\an \to
\Gamma' \backslash D'$ is dominant. The Andr\'e-Oort conjecture for $\Z$VHS, as
formulated in \cite[page 275]{MR2918237} and \cite[Conjecture
5.2]{klin}, states that if the union of the special subvarieties of $S$ for $\VV$
which are of Shimura type with dominant period
maps is Zariski-dense in $S$, then $(S, \VV)$
itself is of Shimura type with dominant period map. This conjecture
easily follows from \Cref{main conj}, see \cite[Section
5.2]{klin}.

\medskip
More generally let us define the \emph{Shimura locus} of $S$ for $\VV$ as the
union of the special subvarieties of $S$ for $\VV$ which are of Shimura
type (but not necessarily with dominant period maps). In
\cite[Question 5.8]{klin}, the second author asked the following 
question, which generalizes the Andr\'{e}--Oort conjecture for $\Z$VHSs:
\begin{question}\label{qestionkling}
Let $\VV$ be a polarizable $\ZZ$VHS on a smooth connected complex quasi-projective variety
$S$. Suppose that the Shimura locus of $S$ for $\VV$
is Zariski-dense in $S$. Is it true that necessarily
$\VV$ is of Shimura type? 
\end{question}

Thanks to \Cref{geometricZP}, we answer affirmatively the geometric part of \Cref{qestionkling}:
\begin{cor}\label{shimuralocus}
Let $\VV$ be a polarizable $\ZZ$VHS on a smooth connected complex quasi-projective variety
$S$, with generic Hodge datum $(\G, D)$, and assume that $\G^{\der}$ is the generic Mumford-Tate group of $\VV$. Suppose that the Shimura
locus of $S$ for $\VV$ of positive period dimension is Zariski-dense in $S$.

If the adjoint group of the generic Mumford-Tate group
$\G$ of $\VV$ is simple then $(\G, D)$ is a Shimura datum.

In general, either $(\G,D)$ is a Shimura datum, or there exists a decomposition
$
\widehat{S}^{\an}\xrightarrow{(\Phi', \Phi'')}  \Gamma' \backslash D'
\times  \Gamma'' \backslash D'' \to  \Gamma \backslash D, 
$
where $\widehat{S}\to S$ is a finite surjective morphism; $ \Gamma'
\backslash D' $ is a Shimura variety; $\Phi'$ is dominant, and $\Gamma' \backslash D'
\times  \Gamma'' \backslash D'' \to  \Gamma \backslash D, 
$ is a finite Hodge morphism; $\Phi (S^{\an})$ is dense in the image of $(\Gamma' \backslash
  D'  \times \Phi''(S^{\an}))$ in $ \Gamma \backslash D$; and the infinitely many Shimura subvarieties of $\Gamma \backslash
  D$ intersecting $\Phi (S^{\an})$ are coming from the image of 
  \begin{displaymath} 
\operatorname{Shimura \ sub-variety}\times {
  \operatorname{CM-point}}\subset  \Gamma' \backslash D' \times
\Gamma'' \backslash D''  
\end{displaymath}
in $\Gamma \backslash D$.

\end{cor}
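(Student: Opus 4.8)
The plan is to deduce everything from the geometric Zilber--Pink theorem (\Cref{geometricZP}), after first reducing the hypothesis on the Shimura locus to a hypothesis on the \emph{atypical} positive-period-dimension Hodge locus. First I would observe that a special subvariety $Z\subset S$ for $\VV$ of Shimura type and of positive period dimension is automatically \emph{atypical} whenever it is not Hodge generic in $S$: indeed the generic Hodge datum $(\G',D')$ of such a $Z$ is a Shimura datum, so $D'$ is a Hermitian symmetric domain and $\dim_\CC \Phi(Z^\an)$ equals the full dimension $\dim D'$ (period maps into Shimura varieties being "horizontal = full"), which forces the codimension inequality of \Cref{atypical} to fail unless $Z=S$ generically --- one has to be slightly careful and also invoke the singular-locus clause, but the point is that a proper Shimura-type special subvariety of positive period dimension either lies in $S^{\sing}_\VV$ or intersects $\Phi(S^\an)$ more than generically. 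Hence, if the Shimura locus of positive period dimension is Zariski-dense in $S$ but $(S,\VV)$ is itself \emph{not} of Shimura type with dominant period map, then $\HL(S,\VV^\otimes)_{\pos,\atyp}$ is Zariski-dense in $S$.

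Next I would apply \Cref{geometricZP} to $\overline{\HL(S,\VV^\otimes)_{\pos,\atyp}}=S$ (taking $Z=S$, an irreducible component by assumption). Branch (a) would say $S$ itself is a maximal atypical special subvariety, which (being all of $S$) is only possible in the degenerate situation already excluded; so we land in branch (b): the generic adjoint Hodge datum $(\G^\ad,D_G)$ splits as $(\G',D')\times(\G'',D'')$, giving after a finite \'etale cover a factorization $\Phi=(\Phi',\Phi'')$, with $S$ containing a Zariski-dense set of atypical special subvarieties for $\Phi''$ of zero period dimension, and $S$ Hodge generic in a typical special subvariety of $S$ for $\Phi$. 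In the simple-adjoint case branch (b) is impossible (no nontrivial product decomposition of $\G^\ad$), so $(S,\VV)$ is of Shimura type with dominant period map and $\Gamma\backslash D$ is a Shimura variety --- this gives the first assertion. In the general case, I would identify the factor $\Gamma'\backslash D'$ with a Shimura variety: the zero-period-dimension special subvarieties for $\Phi''$ that are dense in $S$ are, in each of them, of the form $(\text{special for }\Phi')\times(\text{point for }\Phi'')$, and their Zariski density forces the $\Phi'$-factor to produce a Zariski-dense set of Shimura-type special subvarieties with dominant period maps in the image of $\Phi'$; by the Andr\'e--Oort-type consequence of \Cref{main conj} already recorded in the text (equivalently, by running the simple-factor case on each simple constituent of $\G'$, or by invoking that the dense CM-type points force $D'$ Hermitian), this makes $\Gamma'\backslash D'$ a Shimura variety and $\Phi'$ dominant. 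The statement that the infinitely many Shimura subvarieties of $\Gamma\backslash D$ meeting $\Phi(S^\an)$ come from $(\text{Shimura subvariety})\times(\text{CM-point})$ is then exactly the translation of "Zariski-dense set of atypical special subvarieties for $\Phi''$ of zero period dimension", and the density of $\Phi(S^\an)$ in the image of $\Gamma'\backslash D'\times\Phi''(S^\an)$ is the "Hodge generic in a typical special subvariety" clause of branch (b) combined with $\Phi'$ being dominant onto the whole Shimura factor.

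The main obstacle I anticipate is the first reduction step: making precise and correct the claim that a positive-period-dimension Shimura-type special subvariety which is proper in $S$ must be atypical. This uses that for a sub-VHS of Shimura type the period map is "as non-degenerate as possible" --- concretely that $\Phi|_{Z^\an}$ is submersive onto $\Gamma'\backslash D'$ away from a proper subvariety --- together with a dimension count comparing $\codim_{\Gamma\backslash D}\Phi(Z^\an)$, $\codim_{\Gamma\backslash D}\Phi(S^\an)$, and $\codim_{\Gamma\backslash D}\Gamma'\backslash D'$; one must handle the possibility that the intersection is typical but $Z$ lies in $S^{\sing}_\VV$, which is why \Cref{atypical} includes the singular clause. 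A secondary subtlety is the bookkeeping in the non-simple case to make sure the Shimura factor extracted in branch (b) is literally the $\Gamma'\backslash D'$ appearing in the statement, and that $\Phi''(S^\an)$ genuinely carries the CM-points --- but these are formal once \Cref{geometricZP} and the Andr\'e--Oort consequence of \Cref{main conj} are in hand.
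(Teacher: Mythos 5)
Your overall strategy---derive atypicality, feed it into \Cref{geometricZP}, rule out both branches in the simple case, and read off the product decomposition from branch (b) in the general case---is the same shape as the paper's argument (they run it in the contrapositive direction). But there are two genuine gaps.

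\textbf{The atypicality reduction is based on a false claim.} You assert that for a Shimura-type special subvariety $Z$ one has $\dim_\CC\Phi(Z^\an)=\dim D'$ because ``period maps into Shimura varieties are horizontal $=$ full''. Horizontality of $T(\Gamma'\backslash D')$ is a statement about the target, not about the period map; being of Shimura type in the sense of \cite[Def.~3.7]{klin} and of this paper does \emph{not} require $\Phi|_{Z^\an}$ to be dominant (the paper distinguishes these two notions explicitly, right before \Cref{qestionkling}). You flag this as a potential obstacle and propose ``$\Phi|_{Z^\an}$ submersive away from a proper subvariety'' as a fix---but that too is false for non-dominant period maps. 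The correct mechanism, which is what the paper uses, is the opposite inference: suppose the intersection of $\Phi(S^\an)$ with $\Gamma'\backslash D'$ is \emph{typical} at a smooth point $P$; then $T_P\Phi(S^\an)+T_P(\Gamma'\backslash D')=T_P(\Gamma\backslash D)$, yet the first summand lies in $T_h(\Gamma\backslash D)$ by Griffiths transversality and the second lies in $T_h(\Gamma\backslash D)$ because a Shimura subvariety is horizontal, so $T_h=T$ and $\Gamma\backslash D$ is itself a Shimura variety. Contrapositive: if $\Gamma\backslash D$ is not Shimura, a positive-period-dimension Shimura-type $Z$ is atypical. Note also that the correct hypothesis driving this is ``$\Gamma\backslash D$ is not a Shimura variety'', not ``$(S,\VV)$ is not of Shimura type with dominant period map'' as you write; the former is weaker, and your reduction step conflates them.

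\textbf{The non-simple case invokes \Cref{main conj}.} You cite ``the Andr\'e--Oort-type consequence of \Cref{main conj}'' to identify $\Gamma'\backslash D'$ with a Shimura variety, but \Cref{main conj} is a conjecture, not a theorem of this paper; the corollary must be deduced from the proven \Cref{geometricZP}. You do parenthetically gesture at ``running the simple-factor case on each simple constituent of $\G'$'', which is essentially what the paper does (minimize the decomposition so that $S$ cannot be atypical for $\Phi'$, conclude the $Z_n$ are typical for $\Phi'$, and rerun the horizontal-tangent argument for $\Phi'$), but as written this alternative is undeveloped and the primary route you propose relies on an unproven statement.
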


\begin{rmk}
Notice that \Cref{shimuralocus} implies the geometric part of
the Andr\'{e}--Oort conjecture for $\Z$VHS, which has the same conclusion but requires the sub-Shimura
varieties to be fully contained in $\Phi(S^{\an})$ rather than 
intersecting it in positive dimension. Such a statement has been recently proven
independently of this paper and with different techniques by Chen, Richard
and the third author \cite[Theorem A.7]{eqiIII}.
\end{rmk}

\begin{rmk}\label{rmk2}
Since the horizontal (in the sense of \Cref{newzpsection}) tangent subbundle of $\Gamma\backslash D$ is
locally homogeneous, any Hodge variety $\Gamma \backslash D$ containing one positive dimensional Shimura
subvariety (induced by a Hodge-subdatum) contains infinitely
many of such (compare with \Cref{rmk1} and \cite[Lemma
3.2]{Robles}). For a discussion regarding the horizontal
tangent bundle we refer to the beginning of \Cref{newzpsection}. The
same applies to \emph{CM-points}, that is points whose Mumford--Tate
group is commutative, which are always analytically dense in $\Gamma
\backslash D$. 
\end{rmk}

\subsubsection{The modular locus} \label{modular}
Any $\ZZ$VHS $\VV$ of Shimura type with dominant (algebraic) period map $\Phi:
S \to \Gamma \backslash D$
endows $S$
with a large collection of algebraic self-correspondences, coming from the Hecke
correspondences on the Shimura variety $\Gamma \backslash D$
associated to elements $g\in \mathbf{G}(\Q)_+$. These correspondences
are examples of {\em special correspondences} in the following sense (generalising \cite[Definition 6.2.1]{bu}):

\begin{defi}\label{specialcorrdef}   \label{modularlocusss}
Let $\VV$ be a polarizable $\ZZ$VHS on a smooth connected complex quasi-projective variety
$S$.
A \emph{special correspondence} for $(S,\VV)$ is an irreducible subvariety $W\subset S\times S$ such that:
\begin{enumerate}
\item Both projection maps $W\rightrightarrows S$ are
surjective finite morphisms;
\item $W$ is a special subvariety for $(S\times S, \VV \times \VV)$.
\end{enumerate}
The \emph{modular locus} for $(S,\VV)$ is the union in $S \times S$ of the special correspondences.
\end{defi}

\noindent
Thus the modular locus is a subset of the Hodge locus of $(S\times S,
\VV \times \VV)$. 
If $(S, \VV)$ is of Shimura type with dominant period map, this
modular locus is well-known to be Zariski-dense in $S \times S$. 
As a second corollary to \Cref{geometricZP}, we prove the converse:

\begin{cor}[Hodge commensurability criterion]\label{modularlocus}
Let $\VV$ be a polarizable $\ZZ$VHS on a smooth connected complex quasi-projective variety
$S$.
Then $(S, \VV)$ is of Shimura type with dominant period map if and
only if the modular locus for $(S, \VV)$ is Zariski-dense in $S \times S$.
\end{cor}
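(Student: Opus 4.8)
\textbf{Proof plan for \Cref{modularlocus}.}

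The plan is to reduce the ``if'' direction to an application of the geometric Zilber--Pink theorem \Cref{geometricZP} applied to the $\ZZ$VHS $\VV\times\VV$ on $S\times S$, since the ``only if'' direction is recalled above (Hecke correspondences on a Shimura variety with dominant period map give a Zariski-dense modular locus). So assume the modular locus $\cM\subset S\times S$ is Zariski-dense. First I would record that every special correspondence $W\subset S\times S$ has positive period dimension for $\VV\times\VV$: indeed, by condition (1) each projection $W\rightrightarrows S$ is finite surjective, so $\dim W=\dim S$, and if $\Phi:S^\an\to\Gamma\backslash D$ had positive-dimensional image (the case $\dim\Phi(S^\an)=0$ being trivial, as then $\VV$ is isotrivial and everything is automatic), then the composite $W^\an\to S^\an\to\Gamma\backslash D$ via either projection is non-constant, so $\Phi_{S\times S}(W^\an)$ has positive dimension. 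Hence $\cM\subset\HL(S\times S,(\VV\times\VV)^\otimes)_{\pos}$.

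Next I would show that the generic special correspondences are moreover \emph{atypical}, so that $\cM$ accumulates inside $\HL(S\times S,(\VV\times\VV)^\otimes)_{\pos,\atyp}$. The point is the diagonal obstruction: the generic Hodge datum of $S\times S$ for $\VV\times\VV$ is $(\G\times\G, D\times D)$ (after passing to a finite étale cover so that $\VV$ has connected algebraic monodromy), and $\Phi_{S\times S}(S\times S)^\an$ is contained in the product $\Phi(S^\an)\times\Phi(S^\an)$, which has codimension $2\,\codim_{\Gamma\backslash D}\Phi(S^\an)$ in $\Gamma\backslash D\times\Gamma\backslash D$; but a special correspondence $W$ has $\dim\Phi_{S\times S}(W^\an)\le \dim\Phi(S^\an)$ (again by finiteness of the projections), forcing its generic Hodge subdatum $(\G_W,D_W)$ to have image of the ``wrong'' codimension unless $\G_W$ surjects onto both factors $\G$ — and a subgroup of $\G\times\G$ surjecting onto both factors, being special (hence its own generic Mumford--Tate group) while the two projections of $W$ are finite, is of graph type, which is exactly what an atypical intersection looks like here. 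The clean way to phrase this: either $W$ is singular for $\VV\times\VV$ (atypical by \Cref{atypical}), or $\codim\Phi_{S\times S}(W^\an)<\codim\Phi_{S\times S}((S\times S)^\an)+\codim(\Gamma_W\backslash D_W)$, which is the codimension inequality of \Cref{atypical}. Only finitely many special correspondences could fail to be atypical and still be Zariski-dense is impossible unless $\cM$ itself is a finite union — I would instead argue directly that $\cM$ Zariski-dense forces the atypical part to be Zariski-dense, since the non-atypical (typical) special correspondences, having $\G_W$ of graph type with finite projections, would force $\dim\Phi(S^\an)$-dimensional typical intersections whose existence we can rule out by the same diagonal codimension count (a typical intersection of $\Phi_{S\times S}((S\times S)^\an)$ with a product-of-graph Hodge subvariety would need $\G$ to already be of a very restricted shape).

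Now I would apply \Cref{geometricZP} to the irreducible components of the Zariski closure of $\HL(S\times S,(\VV\times\VV)^\otimes)_{\pos,\atyp}$. Zariski-density of $\cM$ means $S\times S$ is itself such a component, so one of the two alternatives of \Cref{geometricZP} holds for $Z=S\times S$. In case (a), $S\times S$ is a maximal atypical special subvariety of itself for $\VV\times\VV$; but $S\times S$ is Hodge generic in itself, so its generic Hodge datum is $(\G\times\G,D\times D)$ and this is atypical only if $\Phi_{S\times S}$ is singular everywhere or the codimension inequality degenerates — analyzing this forces $\codim_{\Gamma\backslash D}\Phi(S^\an)=0$, i.e. $\Phi$ is dominant, and then I must still extract that $(\G,D)$ is a Shimura datum: this follows because the diagonal $\Delta_S\subset S\times S$ together with a Zariski-dense family of special correspondences $W$ generates, via the finite-to-finite projections, a family of algebraic self-correspondences on $\Phi(S^\an)\cong$ (up to the dominant map) a dense open of $\Gamma\backslash D$, and a Hodge variety carrying a Zariski-dense set of such ``Hecke-like'' correspondences whose components are special subvarieties of graph type must be a Shimura variety — here I would invoke the characterization of Shimura data by the existence of enough special self-correspondences (the analogue of \cite[Definition 6.2.1]{bu} and the surrounding theory), which is essentially that the Mumford--Tate domain admits a totally geodesic / period-algebraic structure forced by the horizontal correspondences. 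In case (b) of \Cref{geometricZP}, the generic adjoint Hodge datum of $S\times S$ decomposes non-trivially as $(\G',D')\times(\G'',D'')$ with $S\times S$ containing a Zariski-dense set of atypical special subvarieties for $\Phi''$ of zero period dimension, and $S\times S$ Hodge generic in a typical special subvariety $\Phi_{S\times S}^{-1}(\Gamma_{\G_{S\times S}}\backslash D_{G_{S\times S}})^0$; since the generic adjoint datum of $S\times S$ is literally $\G^\ad\times\G^\ad$, the decomposition $(\G',D')\times(\G'',D'')$ is a (re)grouping of the simple factors of $\G^\ad\times\G^\ad$, and the presence of a Zariski-dense set of special correspondences — which project finitely to $S$ on both sides and hence are ``balanced'' between the two copies of $\G^\ad$ — forces $(\G',D')$ and $(\G'',D'')$ each to surject onto a copy of $\Gamma\backslash D$; unwinding, $\Phi''$ having zero period dimension on a dense set of these correspondences means the self-correspondences are Hecke-type on the $\G'$-factor, and the conclusion that $\Gamma'\backslash D'$ is a Shimura variety with $\Phi'$ dominant (as in \Cref{shimuralocus}, case (b)) translates back, using that the two $\G^\ad$-factors are interchanged by the flip $S\times S\to S\times S$, $(s,t)\mapsto(t,s)$ which preserves $\cM$, into $(\G,D)$ being a Shimura datum and $\Phi$ dominant.

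\textbf{Main obstacle.} The delicate point is case (a)/(b) of the final paragraph: passing from ``a Zariski-dense set of atypical (graph-type) special correspondences'' to the conclusion ``$(\G,D)$ is a Shimura datum with $\Phi$ dominant.'' One direction of this — that Hecke correspondences exist when you have a Shimura datum — is classical, but the converse (that enough special self-correspondences of graph type, whose two projections are finite, force the Mumford--Tate domain to be Hermitian symmetric, i.e. a Shimura datum) is precisely the Hodge-theoretic commensurator rigidity that the corollary is really about; I expect the proof to route this through the structure of the generic Mumford--Tate group $\G_W$ of a special correspondence $W$ (it is its own Mumford--Tate group and surjects onto both factors $\G$ with finite kernel obstructions coming from the finiteness of the projections, so it is a graph of an automorphism-like datum), combined with the fact that a positive-dimensional family of such graphs forces the diagonal period-algebraic relation characteristic of Shimura varieties — making \Cref{geometricZP} do the heavy lifting of producing the family, and then a Margulis/Mok–Zhong-style commensurator argument (or its Hodge-theoretic shadow already present in \cite{bu}) finishing the identification. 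The symmetry under the flip $(s,t)\mapsto(t,s)$ is what I would lean on to make the two factors genuinely interchangeable and thereby collapse alternative (b) to alternative (a) composed with ``$\Phi$ dominant.''
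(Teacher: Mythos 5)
Your high-level plan — apply \Cref{geometricZP} to $\VV\times\VV$ on $S\times S$ and exploit the atypicality of the special correspondences — is the right one and matches the paper's. But there is a genuine gap that breaks the logic as written, and the "main obstacle" you flag is a red herring.

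The crucial thing you miss is that the atypicality codimension inequality for a special correspondence $W$ is not always satisfied: it holds \emph{if and only if} $\Phi$ is \emph{not} dominant. You set up the right quantities but never actually run the arithmetic. One has $\codim\,(\Phi\times\Phi)(W^\an)=2\dim\Gamma\backslash D-\dim\Phi(S^\an)$, while $\codim\,(\Phi\times\Phi)((S\times S)^\an)+\codim T_g = 2(\dim\Gamma\backslash D-\dim\Phi(S^\an))+\dim\Gamma\backslash D$, and the strict inequality defining atypicality reduces exactly to $\dim\Phi(S^\an)<\dim\Gamma\backslash D$. So your blanket claim that the special correspondences land in $\HL(S\times S,(\VV\times\VV)^\otimes)_{\pos,\atyp}$ is false precisely in the case $\Phi$ dominant — and your subsequent attempt to ``rule out typical special correspondences by the same diagonal count'' would in that case be trying to rule out something that actually happens. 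The correct logical shape is a dichotomy: either $\Phi$ is already dominant, or the $W$'s are atypical and \Cref{geometricZP} is applied to get a contradiction with Zariski-density. You can't merge these two branches into a single application of \Cref{geometricZP} the way you try to.

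Because of this, your analysis of cases (a) and (b) of \Cref{geometricZP} goes astray. In (a) you try to conclude ``$\Phi$ dominant'' from ``$S\times S$ is a maximal atypical special subvariety,'' but $S\times S$ is never atypical in itself (the codimension inequality is $0<0$, and it cannot be singular for $\VV\times\VV$), so (a) is simply impossible — no further information about $\Phi$ comes out of it. In (b), the relevant constraint is that the group $\LL=\mathbf{Z}_{\G^\ad\times\G^\ad}(\G_W^\ad)$ centralising the graph-type Hodge datum of a special correspondence is trivial (since $\G^\ad$ is adjoint), which places you in the ``$L$ compact'' branch of the proof of \Cref{geometricZPweakly}, forcing finiteness — a direct contradiction with density. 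None of this produces a decomposition to be massaged using the flip $(s,t)\mapsto(t,s)$. Finally, the ``Margulis/Mok--Zhong-style commensurator argument'' you anticipate as the main obstacle is not needed at all: once $\Phi$ is dominant, $(\G,D)$ is of Shimura type by the elementary Griffiths-transversality observation that a dominant period map forces $T(\Gamma\backslash D)=T_h(\Gamma\backslash D)$ (the paper cites this as \cite[Lemma 4.11]{klin}; it is the same point used in the proof of \Cref{shimuralocus} via \Cref{rmk1zp}(b)). The substance of the corollary is the codimension count plus \Cref{geometricZP}, not a rigidity theorem.
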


\begin{rmk}
 A special case of \Cref{modularlocus} has been recently proven by the
 first and third authors. They used it to give a new proof of the Margulis
 commensurability criterion for arithmeticity (for complex hyperbolic
 lattices), see \cite[Theorem 6.2.2]{bu} and the discussion in Section
 6.2 in \emph{op. cit.}.
 \end{rmk}

 \begin{rmk} \label{rmk1}
 Recall that a Hodge variety $\Gamma \backslash D$ can contain no positive
 dimensional Hodge subvariety at all, that is the only sub-Hodge datum
 $(\mathbf{H},D_H)\subsetneq (\mathbf{G},D)$ are associated to
 CM-points. This is of course the case if $\Gamma \backslash D$ has
 dimension one, but there are also higher dimensional Shimura
 varieties with the same property. The most famous example is perhaps
 given by $D= \mathbb{B}^n$ (the Hermitian symmetric space associated
 to $\PU(1,n)$), with $n+1$ a prime number and $\Gamma$ an arithmetic
 lattice associated to a simple division algebra (see indeed \cite[Example 9.2]{zbMATH07698517} for all details).
 
However $\Gamma \backslash D$ always admits infinitely many special
 correspondences. \Cref{modularlocus} says that these special
 correspondences do very rarely pullback to finite correspondences on $S$.
\end{rmk}

\subsection{Related work}

A key ingredient in the proof of \Cref{geometricZP} is the Ax-Schanuel conjectured by the second author \cite[Conjecture
7.5]{klin} and recently proved by Bakker and Tsimerman \cite{MR3958791}
(which is recalled in \Cref{zassection}). Starting from Pila and
Zannier \cite{zbMATH05292756}, a link between functional transcendence
and the Zilber--Pink conjecture has been observed in \cite{MR3177266,
  MR3552014, MR3867286}, at least for subvarieties of abelian and pure
Shimura varieties. For the more general case of $\Z$VHS it has
appeared for the first time in the proof of \cite[Theorem
1.2.1]{bu}. In retrospect even the very first proof of the so called \emph{geometric Manin-Mumford} used some functional transcendence (in the form of Bloch-Ochiai Theorem). See indeed \cite[Theorem 4]{zbMATH03908827}. 
We remark here that a new Ax-Schanuel result for foliated principal
bundles has recently been obtained by Bl\'{a}zquez-Sanz, Casale,
Freitag, and Nagloo \cite{2021arXiv210203384B}. Such an Ax-Schanuel
implies the one we are using, it provides a new proof which does not
rely on o-minimality (even if our proof of \Cref{geometricZP} still uses some light
o-minimality). 

\medskip
While this paper was being completed it seems that 
Pila and Scanlon also noticed a difference between the two Zilber--Pink
conjectures for $\Z$VHS, namely \Cref{main conj} and \cite[Conjecture
1.9]{klin} (a point which is addressed in more details in
\Cref{newzpsection}). See indeed \cite[Theorem 2.15 and Remark
2.15]{2021arXiv210505845P}. In the same paper they also speculate
about the Zariski density of the typical Hodge locus (see Conjecture
5.10 in \emph{op. cit.}). It could be that the strategy presented here
applies also in their function field versions. 

\medskip
While this paper was being completed, we learned that de Jong
\cite{dj} had also wondered about \Cref{main conj}. It is remarkable
that he had considered such a problem even before the Zilber--Pink
conjecture for Shimura varieties happened to be in print. In the same
document he discusses the difference between \Cref{main conj} and the
Zilber--Pink conjecture as stated in \cite[Conjecture
1.9]{klin}, and asks if \Cref{level criterion} could
be true. We thank him for sharing with us his insightful notes.

\subsection{Outline of the paper}
\Cref{preliminaries} contains some preliminaries on variational Hodge
theory, most notably discussing the notion of special and
weakly special subvariety, the level of a $\ZZ$VHS and functional transcendence properties of
period maps. \Cref{section4} describes in details the notions of
atypicality and our variant of
the Zilber--Pink conjecture for $\Z$VHS, presenting equivalent
formulations and applications. \Cref{atypicalsection} is devoted to
atypical intersections, proving the geometric part of the Zilber--Pink
conjecture \Cref{geometricZP}, and the fact that families of maximal
atypical weakly special subvarieties lie in typical intersections.
\Cref{criterionsection} proves 
\Cref{level criterion}: in level at least 3, every special subvariety is
atypical. We combine \Cref{geometricZP} and \Cref{level
  criterion} to obtain \Cref{corol}  in \Cref{proof corol}. In \Cref{newzpandtyp}, we prove the results announced in
\Cref{mainsectionintro} and \Cref{complapp}.
\Cref{typicalsection} proves the
``all or nothing'' principle \Cref{typicallocus} for typical intersections. 
Finally \Cref{examplesection}
discusses the density of the typical Hodge locus in level one, and \Cref{finalsection} 
concludes the paper presenting our application to the existence of
  curves of genus four with a given Mumford--Tate group (Serre-Gross' question).

\subsection{Acknowledgements}
The authors would like to thank S. Tayou and N. Tholozan for pointing out a mistake related to
\Cref{specailagrmk} in a previous version;
S. Tayou and T. Kreutz for their careful reading and comments on a first version
of this paper; C. Robles for pointing out a mistake in a previous
version and sharing her thoughts on Section 7.1;  B. Farb for sharing the reference
\cite{zbMATH04057672}; and M. Green,
P. Griffiths and C. Robles for their interest in
this work. Finally we thank the referees for their careful reading,
which improved the paper.

\section{Preliminaries}\label{preliminaries}
In this section we recall the definitions and results from Hodge
theory we will need, as well as adequate references for more details.

\subsection{Some notation}\label{conventions}
\begin{itemize}
  \item In this paper an algebraic variety $S$ is a reduced scheme of
    finite type over the field of complex numbers, not
    necessarily irreducible;
\item If $S$ is an algebraic (resp. analytic) variety, by a subvariety
  $Y \subset S$ we always mean a \emph{closed} algebraic
  (resp. analytic) subvariety. Its smooth locus is denoted by
  $Y^{\operatorname{sm}}$;
  \item Given $\G$ an algebraic group, we denote by $\G^\ad$ its
    adjoint group and by $\G^\der$
    its derived group. Similarly if $\fg$ is a Lie algebra we denote
    by $\fg^\ad$ its adjoint Lie algebra and by $\fg^\der = [\fg,
    \fg]$ its derived Lie algebra. When $\G$ (resp. $\fg$) is reductive the natural morphism
    $\G^\der \to \G^\ad$ (resp. $\fg^\der \to \fg^\ad$) is an isogeny
    (resp. an isomorphism). If $\G$ is a $\QQ$-algebraic group we
    denote by $G$ the connected component of the identity $\G(\RR)^+$
    of the real Lie group $\G(\RR)$. We use the index $_+$ to denote
    the inverse image of $\G^{\ad}(\RR)^+$ in $\G(\R)$. Finally we set
    $\G(\Q)^+=G\cap \G(\Q)$ and $\G(\Q)_+=\G(\RR)_+\cap \G(\Q)$. 
    If $\HH \subset \G$ is an
    algebraic subgroup we denote by $\mathbf{Z}_\G(\HH)$ its
    centraliser in $\G$ and by $\mathbf{N}_\G(\HH)$ its normaliser;
  \item We refer to \cite{Voisin} for standard definitions on Hodge
    structures and variations of Hodge structures. We denote by $\bS$ the Deligne torus $\Res_{\CC/\RR}
      \Gm$. Given $R= \ZZ$, $\QQ$ or $\RR$ we recall that an $R$-Hodge
      structure on a finitely generated $R$-module
      $V$ is equivalent to the datum of a morphism of $\RR$-algebraic group $h: \bS \to
      \GL(V_\RR)$, where $V_\RR:= V \otimes_R \RR$. For $R = \ZZ$
      or $\QQ$ a Hodge class for $V$ is a $\bS$-invariant vector $v
      \in V_\QQ$. 
      We remark here that we do not need to assume that $h$ restricted to $\Gm\subset \bS$ is defined over $\QQ$. 
      A Hodge tensor for $V$ is
      a Hodge class for $\bigoplus_{a, b \in \ZZ} V^{\otimes a} \otimes
      (V^\vee)^{\otimes b}$ where $V^\vee$ denotes the Hodge structure
      dual to $V$. In this paper all Hodge structures and variations
      of Hodge structures are polarizable ones;
\item The category of $\QQ$-Hodge-structures is Tannakian. Recall that the Mumford--Tate group
$\MT(V) \subset \GL(V)$ of a $\QQ$-Hodge structure  
$V$ is the Tannakian group of the Tannakian subcategory $\langle
V\rangle ^\otimes$ of $\QQ$-Hodge structures
generated by $V$.
Equivalently, it is the smallest $\QQ$-algebraic subgroup of
$\GL(V)$ whose base-change to $\RR$ contains the image of $h: \bS \to
\GL(V_\RR)$. It is also the fixator in $\GL(V)$
of the Hodge tensors for $V$. As $V$ is polarised, this is a reductive group. We recall here that $\operatorname{int}(h(i))$
 is a Cartan involution of $\MT(V)_\R/h(\G_{\textnormal{m}, \R)})$.
See \cite{zbMATH03271259}, and \cite[Section 2.1]{Moonen} for more details;

\item We remark here that the main results
  of the paper could also apply to the case of admissible,
  graded-polarizable variation of mixed Hodge structures.

\end{itemize}

\subsection{Generic Mumford--Tate group and algebraic monodromy
  group}

Let $\VV$ be a $\ZZ$VHS on a smooth quasi-projective
  variety $S$ and $Y\subset S$ a closed irreducible algebraic
  subvariety (possibly singular).

A point $s$ of $Y^{\an}$ is said to be Hodge-generic
in $Y$ for $\VV$ if $\MT(\VV_{s, \QQ})$ has maximal dimension when $s$
ranges through $Y^{\an}$. Two Hodge-generic points in $Y^{\an}$ for $\VV$
have isomorphic Mumford-Tate group, where an isomorphism is given by 
horizontal transport along a path between the two points, called  \emph{generic Mumford-Tate group}
$\G_Y= \MT(Y, \VV_{|Y})$ of $Y$ for $\VV$. The Hodge locus $\HL(S, \VV^\otimes)$ is
also the subset of points of $S$ which are not Hodge-generic in $S$
for $\VV$.

The algebraic monodromy group $\HH_Y$ of $Y$ for
$\VV$ is the identity component of the Zariski-closure in $\GL(V_\QQ)$ of the monodromy of the
restriction to $Y$ of the local system $\VV_\ZZ$. It follows from
Deligne's ``Theorem of the fixed part'' and ``Semisimplicity
Theorem'' \cite[Section 4]{MR0498551} that $\HH_Y$ is a 
normal subgroup of the derived group $\G_Y^{\textnormal{der}}$, see
\cite[Theorem 1]{MR1154159}.

A simple example where the inclusion
$\HH:= \HH_S \subset \G^{\der}$ is strict is provided by a $\ZZ$VHS of
the form $\VV = \VV_1 \otimes V_2$, where $\VV_1$ is a
$\ZZ$VHS on $S$ with infinite monodromy and $V_2$ is a non-trivial constant $\ZZ$VHS
(identified with a single non-trivial $\ZZ$-Hodge structure).

\subsection{Period map and Hodge varieties}\label{periodmaps}
Let $\G:= \G_S$ be the generic Mumford-Tate group of $S$ for
$\VV$. Any point $\tilde{s}$ of the universal cover
  $\widetilde{S^{\an}}$ of $S^\an$
defines a morphism of real algebraic groups $h_{\tilde{s}}: \bS \to \G_\RR$.
All such morphisms belong to the same connected component $D= D_S$ of a $\G(\RR)$-conjugacy
class in $\Hom (\bS, \G_\RR)$, which has a natural structure
of complex analytic space (see \cite[Proposition 3.1]{klin}). The space
$D$ is a so-called \emph{Mumford-Tate domain}, a refinement of the classical
period domain for $\VV$ defined by Griffiths. The pair $(\G, D)$ is a connected Hodge datum in the 
sense of \cite[Section 3.1]{klin}, called the generic Hodge datum of
$\VV$. The $\ZZ$VHS $\VV$ is entirely described by its holomorphic period map
$$\Phi: S^{\an} \to \Gamma \backslash D\;\;.$$ Here 
$\Gamma \subset \G(\ZZ)$ is a finite index subgroup and $\Gamma
\backslash D$ is the associated connected \emph{Hodge 
variety} (see \cite[Definition 3.18 and below]{klin}). We denote by
$\tilde{\Phi}: \widetilde{S^{\an}} \to D$ the lift of $\Phi$ at the
level of universal coverings.


\medskip
Enlarging $S$ if necessary by adding the components of a normal crossing divisor
at infinity around which the monodromy for $\VV$ is finite, we obtain
a period map $\Phi': S' \to \Gamma \backslash D$ extending $\Phi$  which is proper (see
\cite[Theorem 9.5]{Griffiths}). Replacing first $S$ by a finite \'etale covering if
necessary, we can moreover assume that the arithmetic group $\Gamma$ is neat (in
particular torsion-free, thus $\Gamma \backslash D$ is a smooth
complex analytic variety); that the monodromy at infinity of $\VV$ is
unipotent \cite[Lemma (4.5)]{sch73}. We leave it to the reader to
verify that the results of the introduction are true for the original
variety if and only if they are true for the modified variety. As a
result in the following we will always assume that these conditions are satisfied.

\subsection{Special and weakly special subvarieties}

For this section we refer to \cite[Section 4]{KO} for more details.

Any connected Hodge variety $\Gamma \backslash D$ is naturally endowed with a countable collection of
irreducible complex analytic subvarieties: its special
subvarieties $\Gamma_{\G'} \backslash D' \subset \Gamma \backslash D$
for $(\G', D') \subset (\G, D)$ a Hodge
subdatum, and $ \Gamma_{\G'}= \G'(\Q)_+\cap \Gamma$. More generally one defines the notion of weakly special
subvariety of a Hodge variety. Let $\Gamma \backslash D$ be a Hodge
variety, with associated connected Hodge datum $(\G, D)$. A weakly
special subvariety of the Hodge variety $\Gamma
\backslash D$ is either a special subvariety or a subvariety image of $$\Gamma_\HH \backslash
D_H \times \{t\} \subset  \Gamma_\HH \backslash
D_H\times \Gamma_\LL \backslash
D_L \stackrel{f}{\to}\Gamma \backslash D\;\;,$$ where $(\HH \times \LL, D_H
\times D_L)$ is a Hodge subdatum of $(\G^\ad, D)$, $\{t\}$ is a Hodge
generic point in $\Gamma_\LL \backslash
D_L$ and $f$ is a finite morphism of Hodge varieties. The datum $((\HH, D_H), t)$ is called a \emph{weak Hodge subdatum} of
$(\G, D)$.

\begin{rmk} \label{converse}
Any special subvariety of $\Gamma \backslash D$ is weakly
special but the converse does not
hold: the simplest example of a weakly special subvariety which is not
special is provided by a Hodge generic point in a Shimura
variety. Notice however that a weakly special subvariety of $\Gamma \backslash
D$ containing a special subvariety $\Gamma_{\G'} \backslash D' \subset
\Gamma \backslash D$ is special: with the notation above, the existence
of the morphism of Hodge data $(\G', D') \to (\HH \times \LL, D_H
\times D_L)$ sending $\Gamma_{\G'} \backslash D'$ to $\Gamma_\HH \backslash
D_H \times \{t\}$ forces $\LL$ to be a torus and $t$ to be CM-point.
\end{rmk}

Let $Y \subset S$ be a closed irreducible algebraic subvariety, with
generic Mumford-Tate group $\G_Y$ and algebraic monodromy group $\HH_Y$
for $\VV_{|Y}$. Suppose that $\HH_Y$ is a strict normal subgroup of
$\G_Y^\der$. The adjoint group $\G_Y^\ad$ decomposes as a non-trivial product
$\HH_Y^\ad \times \LL_Y$. Let $\widetilde{Y^{\an}} \subset \widetilde{S^{\an}}$ be an
irreducible complex analytic component of the preimage of $Y^\an$ in
$\widetilde{S^{\an}}$. The image $\tilde{\Phi}(\widetilde{Y^{\an}})$
is contained in a unique closed
$\G_Y(\RR)^+$-orbit $D_{G_{Y}} = D_{H_{Y}} \times D_{L_{Y}}$ (a
Mumford-Tate domain for $\G_Y$) in $D$, and in a unique closed 
closed $\HH_Y(\RR)^+$-orbit $D_{H_{Y}} \times \{\tilde{t}_Y\}$
(a weak Mumford-Tate domain) in $D$. This
gives rise (replacing $Y$ by a finite \'etale cover if necessary) to a factorization
\begin{displaymath}
  \Phi_{|Y^{\an}}: Y^{\an} \to \Gamma_{\HH_{Y}} \backslash 
    D_{H_{Y}} \times \{t_Y\} \hookrightarrow \Gamma_{\G_{Y}}
    \backslash D_{G_{Y}} = \Gamma_{\HH_{Y}} \backslash
    D_{H_{Y}} \times \Gamma_{\LL_{Y}} \backslash D_{L_{Y}} \subset \Gamma \backslash D \; ,
\end{displaymath}
where $\Gamma_{\HH_{Y}} \backslash D_{H_{Y}} \times \{t_Y\} $
(resp. $\Gamma_{\G_{Y}} \backslash D_{\G_{Y}}$) is the smallest weakly
special (resp. special) subvariety of $\Gamma \backslash D$ containing
$\Phi(Y)$, called the weakly special closure (resp. the special
closure) of $Y$ in $\Gamma \backslash D$. In the case where $\HH_Y =
\G_Y^\der$ the weakly special closure and the special closure of $Y$
in $\Gamma \backslash D$ coincide.

\begin{nota} To simplify the notation we will often write hereafter simply 
\begin{displaymath} \label{period map}
  \Phi_{|Y^{\an}}: Y^{\an} \to \Gamma_{\HH_{Y}} \backslash 
    D_{H_{Y}} \subset \Gamma \backslash D 
\end{displaymath}
for the period map for $Y$, calling $(\HH_Y, D_{H_{Y}})$ the weak
Hodge datum associated to $Y$ and specifying $t_Y$ and $\LL_Y$ only when we
need them.
\end{nota}

\begin{rmk}
  In particular, with the notation of \ref{period},
    $D_{\HH_{}}= D_1 \times \cdots \times D_r$, $\Gamma_{\HH_{S}}=\Gamma_1
    \times \cdots \times \Gamma_r$, and $\{t_S\}$ is the unique point
    of $D'$ determined by the projection of $\Phi(S^{\an})$.
  \end{rmk}

\medskip
\Cref{CDK} can be rephrased by saying that the preimage under $\Phi$
of any special subvariety of $\Gamma \backslash D$ is an algebraic
subvariety of $S$. More generally the preimage under $\Phi$
of any weakly special subvariety of $\Gamma \backslash D$ is an algebraic
subvariety of $S$, see \cite[Corollary 4.8]{KO}.

\begin{defi}\label{defsp}
An irreducible algebraic subvariety $Y \subset S$ is
called \emph{special}, resp. \emph{weakly special} for $\VV$ or $\Phi$ if it is an irreducible component
of the $\Phi$-preimage of its special closure (resp. its weakly
special closure) in $\Gamma \backslash D$. 
\end{defi}

\noi
The Hodge locus $\HL(S, \VV^\otimes)$ is
then the countable union of the strict special subvarieties of $S$ for
$\VV$. Notice, on the other hand, that any point of $S$ is weakly
special for $\VV$. We thus define:

\begin{defi}
The \emph{weakly special locus} $\HL(S, \VV^\otimes)_\ws$ of $S$ for $\VV$ is the
union of the strict weakly special subvarieties of $S$ for $\VV$ 
  of positive period dimension.
\end{defi}

\noi
In particular : $\HL(S, \VV^\otimes)_\pos \subset \HL(S, \VV^\otimes)_\ws$.

\begin{rmk} \label{reduction}
For studying $\HL(S, \VV^\otimes)$, $\HL(S, \VV^\otimes)_{\ws}$,
$\HL(S, \VV^\otimes)_\pos$, or $\HL(S, \VV^\otimes)_{\fpos}$, we can
without loss of generality assume that the algebraic monodromy group
$\HH:= \HH_S$ of $S$ for $\VV$ coincide with $\G^{\der}$, hence $\Gamma_\HH \backslash D_H =\Gamma
\backslash D$. Indeed if $\HH$ is a strict normal subgroup of $\G^\der$ the morphism of Hodge datum
$$(\G, D) \to (\G^\ad = \HH^\ad\times \LL, D_H \times
D_L) \stackrel{p_1}{\to} (\HH^\ad, D_H)$$ induces a commutative
diagram of period maps
$$
\xymatrix{S^{\an} \ar[r]^>>>>>\Phi \ar[drr]_{\Phi'} &\Gamma_\HH \backslash D_H \times
\{t_S\} \ar@{^(->}[r] & \Gamma \backslash D \ar[d]^{p_1} \\
& & \Gamma_\HH \backslash D_H} \;\;.
$$ One easily checks that the special (resp. weakly special) subvarieties of $S$ for
$\Phi$ coincide with the special (resp. weakly special) subvarieties of $S$ for $\Phi'$.
\end{rmk}

As proven in \cite[Corollary 4.14]{KO} one
obtains the following equivalent definitions of special and weakly special
subvarieties of $S$ for $\VV$:

\begin{lem} \label{special}
The  special subvarieties of $S$ for $\VV$ are the closed irreducible
algebraic subvarieties $Y \subset S$ maximal among the closed
irreducible algebraic subvarieties $Z$ of $S$ such that the generic
Mumford-Tate group $\G_Z$ of $Z$ for $\VV$ equals $\G_Y$.
\end{lem}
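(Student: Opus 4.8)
The statement claims that the special subvarieties of $S$ for $\VV$ are exactly the closed irreducible algebraic subvarieties $Y\subset S$ that are maximal among all closed irreducible algebraic $Z\subset S$ with $\G_Z=\G_Y$. The plan is to prove the two implications separately, using \Cref{CDK} (the preimage of a special subvariety of $\Gamma\backslash D$ is algebraic) together with the basic functoriality of generic Mumford--Tate groups under inclusion of subvarieties. The key elementary fact I would isolate first: if $Z\subset Z'$ are closed irreducible subvarieties of $S$, then $\G_Z\subseteq \G_{Z'}$; moreover if $z\in Z^{\an}$ is Hodge-generic in $Z$ then $\G_Z=\MT(\VV_{z,\QQ})$, and if in addition $z$ is Hodge-generic in $Z'$ then $\G_Z=\G_{Z'}$. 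A second fact to record: for any closed irreducible $Z$, its special closure $\Phi^{-1}(\Gamma_{\G_Z}\backslash D_{G_Z})^0$ contains $Z$ and has generic Mumford--Tate group equal to $\G_Z$ — this is because a Hodge-generic point of $Z$ maps into a Hodge-generic point of the Mumford--Tate domain $D_{G_Z}$, and the fibre component picks out exactly those points whose Mumford--Tate group is contained in $\G_Z$.

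First I would show that a special subvariety $Y$ (in the sense of \Cref{defsp}, i.e. an irreducible component of the $\Phi$-preimage of its special closure $\Gamma_{\G_Y}\backslash D_{G_Y}$) is maximal with the property $\G_Z=\G_Y$. Indeed $\G_Y$ is by construction the generic Mumford--Tate group of $Y$. If $Z\supseteq Y$ is a closed irreducible subvariety with $\G_Z=\G_Y$, then a Hodge-generic point $y$ of $Y$ satisfies $\MT(\VV_{y,\QQ})\subseteq \G_Y=\G_Z$, so $y$ lies in the $\Phi$-preimage of $\Gamma_{\G_Z}\backslash D_{G_Z}=\Gamma_{\G_Y}\backslash D_{G_Y}$; since $Z$ is irreducible and $\Phi(Z)\subseteq \Gamma_{\G_Z}\backslash D_{G_Z}$ (as $\G_Z$ is the generic, hence the ambient, Mumford--Tate group for the points of $Z$), the subvariety $Z$ is contained in an irreducible component of that preimage. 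But $Y$ is such a component and $Y\subseteq Z$, so $Z=Y$ by irreducibility and maximality of the component. Hence $Y$ is maximal.

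Conversely, let $Y$ be maximal among closed irreducible $Z$ with $\G_Z=\G_Y$. Consider its special closure $Y^{\mathrm{sp}}:=\Phi^{-1}(\Gamma_{\G_Y}\backslash D_{G_Y})^0$, the irreducible component containing $Y$; by \Cref{CDK} this is a closed irreducible algebraic subvariety, and by the second recorded fact $\G_{Y^{\mathrm{sp}}}=\G_Y$. Since $Y\subseteq Y^{\mathrm{sp}}$ and $\G_{Y^{\mathrm{sp}}}=\G_Y$, maximality of $Y$ forces $Y=Y^{\mathrm{sp}}$, i.e. $Y$ is an irreducible component of the $\Phi$-preimage of its special closure — which is precisely \Cref{defsp}. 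This closes the loop. The main obstacle is bookkeeping the behaviour of generic Mumford--Tate groups along the fibres of $\Phi$ (ensuring that $\Phi^{-1}(\Gamma_{\G_Y}\backslash D_{G_Y})^0$ really has generic Mumford--Tate group $\G_Y$ and not something smaller), which rests on the fact that Hodge-generic points are dense and that the Mumford--Tate group of a point in that fibre is contained in $\G_Y$; this is standard but must be stated carefully. Everything else is formal manipulation with inclusions of irreducible varieties and \Cref{CDK}; none of it requires functional transcendence.
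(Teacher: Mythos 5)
The paper does not give its own proof of this lemma, instead citing \cite[Corollary 4.14]{KO}, so there is no internal argument to compare against. Your proof is correct and is the expected one: the forward direction uses that any $Z\supseteq Y$ with $\G_Z=\G_Y$ sits inside $\Phi^{-1}(\Gamma_{\G_Y}\backslash D_{G_Y})$ together with maximality of irreducible components, while the reverse direction applies \Cref{CDK} to the special closure $Y^{\mathrm{sp}}=\Phi^{-1}(\Gamma_{\G_Y}\backslash D_{G_Y})^0$ and invokes the maximality hypothesis on $Y$. The one step you rightly flag as needing care is the containment $\G_{Y^{\mathrm{sp}}}\subseteq\G_Y$: a Hodge-generic point $s$ of $Y^{\mathrm{sp}}$ gives $\G_{Y^{\mathrm{sp}}}=\MT(\VV_{s,\QQ})\subseteq\gamma\G_Y\gamma^{-1}$ for some $\gamma\in\Gamma$, while $Y\subseteq Y^{\mathrm{sp}}$ gives $\G_Y\subseteq\G_{Y^{\mathrm{sp}}}$, so $\G_Y\subseteq\gamma\G_Y\gamma^{-1}$ and one concludes equality of all three groups by connectedness and equality of dimensions.
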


\begin{lem} \label{equi}
The weakly special subvarieties of $S$ for $\VV$
are the closed irreducible
algebraic subvarieties $Y \subset S$ maximal among the closed
irreducible algebraic subvarieties $Z$ of $S$ whose algebraic
monodromy group $\HH_Z$ with respect to $\VV$ equals $\HH_Y$.
\end{lem}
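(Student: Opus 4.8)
The assertion is \cite[Corollary 4.14]{KO}, and is proved in parallel with \Cref{special}; here is the strategy. Recall that, by \Cref{defsp}, an irreducible $Y\subset S$ is weakly special for $\VV$ exactly when it is an irreducible component of $\Phi^{-1}(T_Y)$, where $T_Y:=\Gamma_{\HH_Y}\backslash D_{H_Y}\times\{t_Y\}$ is the weakly special closure of $Y$, i.e. the smallest weakly special subvariety of $\Gamma\backslash D$ containing $\Phi(Y)$; and that $\Phi^{-1}(T)$ is an algebraic subvariety of $S$ for every weakly special $T\subset\Gamma\backslash D$, by \cite[Corollary 4.8]{KO}. The proof rests on two elementary facts about algebraic monodromy groups, both contained in \cite[Section 4]{KO}: (i) if $Y\subset Z$ are irreducible then $\HH_Y\subseteq\HH_Z$, since the monodromy representation of $\VV_{|Z}$ restricts to that of $\VV_{|Y}$; and (ii) if $Z\subset S$ is irreducible and $\Phi(Z^{\an})$ is contained in a weakly special subvariety whose associated weak Hodge datum has group $\HH'$, then $\HH_Z\subseteq\HH'$ — indeed $\Phi_{|Z}$ then factors through $\Gamma_{\HH'}\backslash D_{H'}$, so the monodromy of $\VV_{|Z}$ lands in $\Gamma_{\HH'}\subset\HH'(\QQ)$ and $\HH_Z$, the identity component of its Zariski closure, is contained in $\HH'$.

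Assume first that $Y$ is weakly special, and let $Z\supseteq Y$ be irreducible with $\HH_Z=\HH_Y=:\HH$. Fix an analytic component $\widetilde{Y^{\an}}$ of the preimage of $Y^{\an}$ in $\widetilde{S^{\an}}$; it is contained in an analytic component $\widetilde{Z^{\an}}$ of the preimage of $Z^{\an}$. For any $x\in\widetilde{Y^{\an}}$, the point $\tilde{\Phi}(x)$ lies in the $\HH(\RR)^+$-orbit $D_{H_Y}\times\{\tilde t_Y\}$ attached to $\widetilde{Y^{\an}}$ and simultaneously in the $\HH(\RR)^+$-orbit $D_{H_Z}\times\{\tilde t_Z\}$ attached to $\widetilde{Z^{\an}}$; two $\HH(\RR)^+$-orbits sharing a point coincide, so these orbits are equal and therefore $Y$ and $Z$ have the same weakly special closure $T_Y$. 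Hence $Z\subseteq\Phi^{-1}(T_Y)$; being irreducible, $Z$ lies in a single irreducible component of $\Phi^{-1}(T_Y)$, and this component contains $Y$, which is itself such a component, so the component equals $Y$ and $Z=Y$. Thus $Y$ is maximal among irreducible subvarieties with algebraic monodromy group $\HH$.

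Conversely, suppose $Y$ is maximal among irreducible subvarieties with algebraic monodromy group $\HH:=\HH_Y$. Let $T_Y$ be the weakly special closure of $Y$ and $W:=\Phi^{-1}(T_Y)$, an algebraic subvariety of $S$ containing $Y$; let $Y'$ be the irreducible component of $W$ through $Y$. By (i), $\HH=\HH_Y\subseteq\HH_{Y'}$; by (ii) applied to $Y'\subseteq W$, whose period image is contained in $T_Y$, we get $\HH_{Y'}\subseteq\HH$. Hence $\HH_{Y'}=\HH_Y$, and maximality of $Y$ forces $Y=Y'$, so $Y$ is an irreducible component of $\Phi^{-1}(T_Y)$, i.e. $Y$ is weakly special. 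The only genuinely delicate point is fact (ii) — that confining the period image to a weakly special subvariety controls the algebraic monodromy group, equivalently that the monodromy of $\VV_{|Z}$ is the pullback along $\Phi_{|Z}$ of the tautological monodromy on $\Gamma\backslash D$ — together with the bookkeeping of the finite étale covers of $S$ implicit in the factorizations through $\Gamma_{\HH}\backslash D_H\times\{t\}$; both are furnished by \cite[Section 4]{KO}, and the orbit-coincidence step in the second paragraph (where the analytic picture in $D$ is converted into the algebraic structure on $S$) is the place that needs the most care.
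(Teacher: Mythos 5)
The paper does not supply a proof of \Cref{equi}; it simply cites \cite[Corollary 4.14]{KO}. So there is no in-paper argument to compare against, and the question is whether your reconstruction is sound. It is. Both directions are handled correctly: the forward direction hinges on the observation that two $\HH(\RR)^+$-orbits in $D$ sharing a point coincide, which (together with the fact that the lifted period image of an irreducible subvariety with algebraic monodromy $\HH$ sits inside a single such orbit) forces $Y$ and $Z$ to have the same weakly special closure, and then the irreducible-component argument gives $Y=Z$. The converse direction is the standard sandwich via facts (i) and (ii), and the use of \Cref{defsp} at the end is exactly right.

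Two small remarks. First, you are right to single out fact (ii) as the delicate step, but your one-line justification (``the monodromy of $\VV_{|Z}$ lands in $\Gamma_{\HH'}$'') elides the actual content: what one sees directly from equivariance is that each monodromy element stabilises the orbit $D_{H'}\times\{\tilde t\}$ as a set, hence normalises $\HH'$; to get containment in $\HH'$ itself one then uses the product decomposition $\G'^{\ad}=\HH'^{\ad}\times\LL'$ attached to the weakly special $T$ and the fact that the $\LL'$-component of the period map on $T$ is constant, so that the monodromy projects trivially to $\LL'$. This is genuinely where the weight of \cite[Section 4]{KO} is carried, and it is worth stating explicitly rather than leaving it at ``lands in $\Gamma_{\HH'}$''. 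Second, in the forward direction note that the weak Mumford--Tate domain attached to $Y$ is an $\HH_Y(\RR)^+$-orbit in $D$ (not merely in $D_{G_Y}$), so its comparison with the one attached to $Z$ does not require $\G_Y=\G_Z$; you implicitly use this, and it is correct, but since $\G_Y$ may be strictly smaller than $\G_Z$ here it is the kind of point a reader would want to see acknowledged.
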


\subsection{Hodge-Lie algebras} \label{Hodge-Lie}

\begin{defi} \label{HL-algebra}
  Let $K= \QQ$ or $\RR$, or a totally real number field. If $K=\Q$ or $\R$, a $K$-\emph{Hodge-Lie algebra} 
  is a reductive (finite dimensional) Lie
  algebra $\fg$ over $K$ endowed with a $K$-Hodge structure of weight zero
  $$\fg_\CC = \bigoplus_{i \in \ZZ} \fg^i, \qquad  \textnormal{and}
  \quad \fg^{-i}
  =\overline{\fg^i} \quad \forall i \in \ZZ\;\;,$$
  such that the Lie bracket $[\cdot, \cdot]: \bigwedge^2 \fg \to  \fg$ is
  a morphism of $K$-Hodge structures, and the negative of the Killing form
  $B_\fg: \fg^{\ad} \otimes \fg^{\ad} \to K$ is a polarisation of
  the $K$-Hodge substructure $\fg^\ad$ (where we identify the adjoint
  Lie algebra $\fg^\ad$ with the derived one $\fg^\der:=[\fg, \fg]$).

 If $K$ is a totally real number field a $K$-\emph{Hodge-Lie algebra} is the
 datum of a reductive Lie algebra $\fg$ over $K$ and of a
 $\QQ$-Hodge-Lie algebra structure on $\Res_{K/\QQ} \fg$.
\end{defi}

\begin{rmk}
  In \Cref{HL-algebra} the notation $\fg^i$ abbreviates the classical
  $\fg^{i, -i}$ of Hodge theory.
\end{rmk}

\begin{rmk} \label{remfactor}
If $\fg$ is a $K$-Hodge-Lie algebra then the
derived Lie algebra $\fg^\der$ is a $K$-Hodge-Lie subalgebra
of $\fg$,
which is equal to the
adjoint $K$-Hodge-Lie algebra $\fg^\ad$ quotient of $\fg$ by its
center; if $\fg$ is semi-simple as a Lie algebra, then its simple
factors are naturally $K$-Hodge-Lie algebras; and being simple
as a $K$-Lie algebra is equivalent to being simple as a $K$-Hodge-Lie algebra.
\end{rmk}

\begin{rmk} \label{complex}
Given $h : \bS \to \G_\RR $ a pure Hodge structure on a real
  algebraic group $\G_\RR$ in the sense of 
  \cite[(page 46)]{Simpson}, the adjoint action of $h$ on the Lie algebra
  $\fg_\RR$ endows $\fg_\RR$ with the structure of a real Hodge-Lie
  algebra. Conversely one easily checks that a real Hodge-Lie algebra
  structure on $\fg_\RR$ integrates into a Hodge structure on some
  connected real algebraic group $\G_\RR$ with Lie algebra
  $\fg_\RR$. In particular if a simple real Lie algebra $\fg_\RR$
  admits a Hodge-Lie structure its complexification $\fg_\CC$ is still
  simple, see \cite[4.4.10]{Simpson}.
\end{rmk}

\subsection{Level}

\begin{defi} \label{levelHS}
Given a simple $\RR$-Hodge-Lie algebra
  $\fg_\RR$, its
{\em level} is the largest integer $k$ such that $\fg^{k} \not =
0$. The \emph{level}
 of a simple $\QQ$-Hodge-Lie algebra $\fg$ is
the \emph{maximum} of the level of the irreducible factors of $\fg_\RR:= \fg
\otimes_\QQ \RR$. The \emph{level} of a semi-simple $\QQ$-Hodge-Lie algebra is the
\emph{minimum} of the levels of its simple factors. If $K= \Q$ or
$\R$, the \emph{level} of a $K$-Hodge structure $V$ is the level of
its adjoint $K$-Hodge-Lie Mumford-Tate algebra $\fg^\ad$.
\end{defi}

\begin{rmk}
Notice that our \Cref{levelHS} is 
\emph{not} the standard one. Usually, the level of any pure real
Hodge structure $V$ is defined as the maximum of $k-l$ for
$V^{k,l}\not =0$. We believe that our definition, which takes into
account only the adjoint Mumford-Tate Lie algebra of $V$, is more fundamental. For instance, the level of the weight $2$ Hodge structure
$H^2(S, \QQ)$, for $S$ a $K3$-surface, is one for \Cref{levelHS},
reflecting the fact that the motive of $S$ is of abelian type, while
it would be two with the usual definition.
\end{rmk}

  It follows from \Cref{complex} that if $\VV$ is a polarizable $\ZZ$VHS
  on a smooth connected quasi-projective variety $S$, with generic Hodge datum
  $(\G, D)$ and algebraic monodromy group $\HH$, then any Hodge generic point $x\in \tilde{\Phi}(\widetilde{S^{\an}})$
  defines a $\QQ$-Hodge-Lie algebra
  structure $\fg^\ad_x$ on $\fg^\ad$, with $\QQ$-Hodge-Lie subalgebra $\fh_x$. One
  immediately checks that the levels of these two structures are
  independent of the choice of the Hodge generic point $x$ in
  $D$.

\begin{defi} \label{levelVHS}
Let $\VV$ be a polarizable $\ZZ$VHS
on a smooth connected complex quasi-projective variety 
$S$, with algebraic monodromy group $\HH$. The \emph{level} of $\VV$ is the
level of the $\QQ$-Hodge-Lie algebra $\fh_x$ for $x$ any Hodge
generic point of $\tilde{\Phi}(\widetilde{S^{\an}})$ .
\end{defi}

\subsection{Functional Transcendence}\label{zassection}
Let $S$ be a smooth complex quasi-projective variety supporting a
polarizable $\Z$VHS $\mathbb{V}$, with generic Hodge datum $(\G, D)$, 
algebraic monodromy group $\HH$, and period map $\Phi: S \to \Gamma_{\HH}
\backslash D_{H} \subset \Gamma \backslash D$. Without loss of
generality we can assume, replacing if necessary $S$ by a finite
\'etale cover, that $\Gamma$ is torsion-free. Let $\tilde{\Phi}:
\widetilde{S^\an} \to D_H$ be the lift of $\Phi$. The domain $D_H$ is canonically embedded as an
open complex analytic real semi-algebraic subset in a flag variety
$D_H^{\vee}$ (called its \emph{compact dual}). Following \cite{klin} we define
an irreducible algebraic subvariety of $D_H$ (resp. $S \times D_H$) as a complex analytic
irreducible component of the intersection of an algebraic subvariety
of $D_H^{\vee}$ (resp. $S \times D_{H}^{\vee}$) with $D_H$ (resp. $S
\times D_{H}$).

The following result is the so called Ax-Schanuel Theorem. It was conjectured by the second author \cite[Conjecture
7.5]{klin} and later proved by Bakker and Tsimerman \cite[Theorem
1.1]{MR3958791}, generalising the work of Mok, Pila and Tsimerman
\cite[Theorem 1.1]{as} from level one to arbitrary levels.

\begin{theor}[Bakker--Tsimerman]\label{astheorem}  
Let $W \subset S \times D_H$ be an algebraic subvariety. Let $U$ be an
irreducible complex analytic component of $W \cap S \times_{\Gamma_\HH \backslash D_H} D_H$
such that 
\begin{displaymath}
\codim_{S \times D_{H}} U< \codim_{S \times D_{H}} W+ \codim_{S
  \times D_{H}} (S \times_{ \Gamma_{\HH} \backslash D_{H}}
D_H)\;\;.  
\end{displaymath}
Then the projection of $U$ to $S$ is contained in a strict weakly
special subvariety of $S$ for $\VV$.
\end{theor}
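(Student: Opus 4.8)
\emph{Sketch of the strategy} (following Bakker--Tsimerman; in the level-one case this is due to Mok--Pila--Tsimerman \cite[Theorem 1.1]{as}). The statement is the Ax--Schanuel theorem for the period map, and the plan is to run a volume-versus-counting dichotomy over the Mumford--Tate domain $D_H$. Write $\cG := S\times_{\Gamma_\HH\backslash D_H}D_H\subset S\times D_H$ for the incidence variety; it is a covering of $S^{\an}$ with discrete fibres, of dimension $\dim S$, whose lift to the universal cover $\widetilde{S^{\an}}$ has the graph $\{(\tilde s,\tilde\Phi(\tilde s))\}$ of the lifted period map as a component. After the standard reduction one may take $W$ minimal (no proper algebraic $W'\subsetneq W$ has $W'\cap\cG$ containing an atypical component with the same projection as $U$); set $S':=\overline{p_1(U)}^{\,\Zar}$, with algebraic monodromy group $\HH_{S'}\subset\HH$. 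If $\HH_{S'}$ is a \emph{proper} subgroup of $\HH$ then, by \Cref{equi}, $S'$ is contained in a weakly special subvariety with algebraic monodromy group $\HH_{S'}$, which is \emph{strict} (its monodromy is not all of $\HH$); since it contains $p_1(U)$ we are done. So assume $\HH_{S'}=\HH$ and aim at a contradiction. The key point is that here $p_1(U)$ is only a complex-\emph{analytic} subset of $S'$, in general transcendental and of positive codimension although Zariski-dense in $S'$; this is precisely why one cannot simply assert that $U$ is algebraic, and why functional transcendence is needed.

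\emph{The dichotomy.} By \cite[Theorem 1.3]{BKT} there is a definable fundamental set $\mathfrak F\subset D_H$ for $\Gamma_\HH$ on which $\tilde\Phi$ is definable in $\RR_{\an, \exp}$; hence $U\cap(\tilde\Phi^{-1}(\mathfrak F)\times D_H)$, and the set of monodromy transformations of bounded height that interact with $U$, are definable. Applying the Pila--Wilkie theorem to these families, the number of such transformations of height $\le T$ is $\ll_\varepsilon T^\varepsilon$, unless a positive-dimensional semialgebraic block occurs; but a block, being semialgebraic and swept out by the graph, produces (by a hyperbolic Ax--Lindemann-type argument) a weakly special subvariety of $S$ through $p_1(U)$, forcing $p_1(U)$ into a strict weakly special subvariety against our hypothesis --- so there is no block and the count is $\ll_\varepsilon T^\varepsilon$. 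On the other hand, $\HH_{S'}=\HH$ means the period image of $S'$ lies in no proper weak Mumford--Tate subvariety, and a volume lower bound for algebraic subvarieties of $\Gamma_\HH\backslash D_H$ forces the atypical locus $U$ --- and this is exactly where the atypicality of $U$ enters --- to interact with $\gg T^{\delta}$ monodromy translates of $\mathfrak F$ of height $\le T$ for some $\delta>0$. The two estimates contradict each other; unwinding the reductions gives the conclusion.

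\emph{The main obstacle.} The technically hard, genuinely new ingredient (over the Hermitian symmetric case treated by Mok--Pila--Tsimerman) is the volume lower bound: for an algebraic subvariety $Z$ of $\Gamma_\HH\backslash D_H$ one needs $\Vol(Z\cap B_R)\gg e^{cR}$ as $R\to\infty$ unless $Z$ is weakly special. For quotients of bounded symmetric domains this is Hwang--To's theorem; over a general $D_H$ one has only Griffiths transversality and the Griffiths--Schmid negativity of holomorphic sectional curvatures in horizontal directions, with no genuine curvature pinching, so the estimate must be established directly on the (generally not totally geodesic) horizontal subvariety $Z$ from the structure of the invariant metric, the behaviour near the boundary of $S$ being controlled via the nilpotent orbit theorem (having first arranged, as in the setup above, that $\Phi$ is proper). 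Together with the definability statements, which are what make the transcendental locus $p_1(U)$ accessible to algebraic geometry in the first place, this volume bound is the heart of the proof; everything else is bookkeeping with the functional transcendence of period maps.
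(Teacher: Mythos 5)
This theorem is not proved in the paper: it is quoted verbatim from Bakker--Tsimerman (\cite[Theorem 1.1]{MR3958791}), as the text immediately preceding the statement explains, so there is no internal proof for me to compare your attempt against. What I can do is assess your sketch as an account of the cited result.

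Measured against the Bakker--Tsimerman argument (which, as they say themselves and as the paper notes, adapts Mok--Pila--Tsimerman from the Hermitian-symmetric case to general period maps), your outline is essentially correct at the level of strategy. The reduction via \Cref{equi} (if $\HH_{S'}\subsetneq\HH$ one is already done), the use of a definable fundamental set and definability of the restricted period map from \cite{BKT}, the Pila--Wilkie dichotomy between a sub-polynomial count and a positive-dimensional semi-algebraic block, and the identification of the volume lower bound for positive-dimensional Griffiths-horizontal algebraic subvarieties as the genuinely new technical input (the Hwang--To analogue, with the nilpotent orbit theorem controlling the boundary behaviour) --- all of this is accurate, and you rightly single out the volume estimate as the heart of the matter rather than the bookkeeping around it.

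Two imprecisions worth flagging. First, the volume bound $\Vol(Z\cap B_R)\gg e^{cR}$ is proved for \emph{algebraic subvarieties of $D_H$} (irreducible components of intersections of $D_H$ with algebraic subvarieties of the compact dual $D_H^{\vee}$, measured against geodesic balls in $D_H$), not for subvarieties of $\Gamma_\HH\backslash D_H$, which is not an algebraic variety outside the Shimura case; as written your statement does not parse. Second, in the block case the semi-algebraic block does not directly hand you a weakly special subvariety through $p_1(U)$; rather, one uses the block to produce a positive-dimensional algebraic subgroup of $\HH$ stabilising an appropriate algebraic set attached to $U$ and $W$, and it is this enlarged stabiliser that then forces the weakly special closure of $\overline{p_1(U)}^{\Zar}$ to be proper. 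These are presentational slips in an otherwise faithful sketch; neither reflects a gap in understanding of the overall mechanism.
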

Notice that $S \times_{ \Gamma_{\HH} \backslash D_{H}} D_H$ is simply the image of the graph of $\tilde{\Phi}:
\widetilde{S^\an} \to D_H$ under $
\widetilde{S^\an} \times D_H\to S \times D_H$.
\begin{rmk}
  Following the notation of \Cref{periodmaps}, the intersection $W
  \cap S \times_{\Gamma_\HH \backslash D_H} D_H$ can be identified
  with the intersection in $S \times D_H$ between $W$ and the the
  image of $ \widetilde{S^{\an}}$ in $S \times D_H$ along the map
  $(\pi, \tilde{\Phi})$.
  \end{rmk}

For the applications presented in our paper, the (simpler) case of
$W=W'\times W''$ is enough, where $W'$ is an algebraic subvariety of
$S$, and $W''$ an algebraic subvariety of $D_H$. Notice also that in the
conclusion of \Cref{astheorem} the projection of $U$ to $S$ can be
zero dimensional. 

\begin{rmk}\label{remkarkmixed} A version of the Ax-Schanuel
  conjecture for graded-polarized admissible variations of mixed Hodge structures has
  recently been established by Chiu \cite[Theorem
  1.2]{2021arXiv210110968C} and, independently, in joint work of Gao and the second
  author \cite[Theorem 1.1]{2021arXiv210110938G}.
  On a somewhat
  different direction, the first and third author \cite[Theorem
  1.2.2]{bu} proved an Ax-Schanuel conjecture for quotients of
  Hermitian symmetric spaces by irreducible \emph{non-arithmetic}
  lattices. 
\end{rmk}

\section{Typicality versus atypicality and a strong Zilber--Pink
  conjecture}\label{section4} 

Even if our paper is focused on the case of \emph{pure} $\Z$-VHS, we
remark that everything in this section can be translated
to the more general case of graded-polarized admissible mixed $\Z$VHS (and probably 
all the results hold true, using \Cref{remkarkmixed}).

\subsection{Two notions of typicality and atypicality}\label{typatyp}
The choice to consider either the generic Mumford-Tate
group of a $\ZZ$VHS or its algebraic monodromy group, which leads to distinguish
special subvarieties from the more general weakly special subvarieties, also
leads to distinguish two notions of atypicality:

\begin{defi} \label{codimension}
Let $\VV$ be a polarizable $\ZZ$VHS on a smooth complex quasi-projective
variety $S$ and $\Phi: S^{\an} \to \Gamma_\HH \backslash D_H \subset
\Gamma \backslash D$ its period map. Let $Y \subset
S$ be a closed irreducible algebraic subvariety, with weakly special
and special
closures $\Gamma_{\HH_{Y}} \backslash D_{H_{Y}} \subset
\Gamma_{\G_{Y}} \backslash D_{\G_{Y}}$ in $\Gamma \backslash D$ . We define
the \emph{Hodge codimension} of $Y$ for $\VV$ as
$$\Hcd(Y,\VV): = \dim D_{\G_{Y}} - \dim \Phi(Y^{\an})\;\;,$$ and the \emph{monodromic
codimension} of $Y$ for $\VV$ as
$$\Mcd(Y, \VV) := \dim D_{H_{Y}} - \dim \Phi(Y^{\an})\;\;.$$
\end{defi}

Similarly:
\begin{defi} \label{atypical-general}
The subvariety $Y\subset S$ is said to be \emph{atypical} for $\VV$ if
either $Y$ is singular for $\VV$, or if $\Phi(S^\an)$
has an excess intersection in $\Gamma \backslash D$ with the special
subvariety $\Gamma_{\G_{Y}} \backslash D_{\G_{Y}}$:  
$$
\Hcd(Y, \VV) < \Hcd(S, \VV)\;\;,
$$
that is
$$\codim_{\Gamma\backslash D}
    \Phi(Y^{\an}) < \codim_{\Gamma\backslash D} \Phi(S^{\an}) +
    \codim_{\Gamma\backslash D} \Gamma_{\G_{Y}}\backslash
    D_{\G_{Y}} \;\;.
 $$
\noi
Otherwise $Y$ is said to be {\em typical}.
\end{defi}

 \begin{defi}
The subvariety $Y$ is said to be \emph{monodromically atypical} for
$\VV$ if either $Y$ is singular for $\VV$, or if $\Phi(S^\an)$
has an excess intersection in $\Gamma_{\HH} \backslash D_{H}$ with
the weakly special subvariety $\Gamma_{\HH_{Y}} \backslash D_{H_{Y}}$:
$$ \Mcd(Y, \VV) <
\Mcd(S, \VV)\;\;,$$ that is:
$$ \codim_{\Gamma_{\HH}\backslash D_{H}}
    \Phi(Y^{\an}) < \codim_{\Gamma_{\HH}\backslash D_{H}} \Phi(S^{\an}) +
    \codim_{\Gamma_{\HH} \backslash D_{H}} \Gamma_{\HH_{Y}}\backslash
    D_{H_{Y}}\;\;.$$
\noi Otherwise $Y$ is said to be {\em monodromically typical}.
   \end{defi}

Let us generalize \Cref{atypical locus} to the weakly special locus:

\begin{defi}\label{defws}
The \emph{monodromically atypical weakly special Hodge locus} $\HL(S, \VV^{\otimes})_{\ws, \watyp}$ of $S$ for $\VV$
is the union of  the monodromically atypical weakly special
subvarieties of $S$ for $\VV$ of positive period dimension.
\end{defi}

Although we won't use it in this paper, the notion of atypicality can be refined as in
\cite[Definition 1.8]{klin}:
\begin{defi}
An irreducible subvariety $Y\subset S$ is \emph{optimal} (resp. \emph{weakly
optimal}) for $\VV$
if for any irreducible subvariety $ Y \subset S$
containing $Y'$ strictly, the following inequality holds true 
\begin{displaymath}
\Hcd(Y,\VV)<\Hcd(Y',\VV) \qquad (\textnormal{resp.} \; \Mcd(Y,\VV)<\Mcd(Y',\VV)\;\;)\;\;.
\end{displaymath}
\end{defi}

\subsection{Atypicality versus weak atypicality}

Let us now compare atypicality with weak atypicality.

\begin{lem} \label{HatypisMatyp}
Any atypical subvariety is monodromically atypical, and therefore any weakly
typical subvariety is typical.
\end{lem}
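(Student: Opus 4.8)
The plan is to compare the two defining inequalities for atypicality and weak atypicality directly, reducing everything to an inequality relating the Hodge codimension and the monodromic codimension. First I would dispose of the ``singular'' clause: if $Y$ is singular for $\VV$ then by definition $Y$ is both atypical and weakly atypical, so the implication holds trivially in that case and we may assume $Y$ is not singular for $\VV$. It then suffices to show that the numerical excess-intersection inequality defining atypicality implies the one defining weak atypicality.

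Unwinding the definitions, atypicality of $Y$ means $\Hcd(Y, \VV) < \Hcd(S, \VV)$, i.e. $\dim D_{\G_Y} - \dim \Phi(Y^\an) < \dim D_{\G} - \dim \Phi(S^\an)$, while weak atypicality means $\Mcd(Y,\VV) < \Mcd(S,\VV)$, i.e. $\dim D_{H_Y} - \dim \Phi(Y^\an) < \dim D_H - \dim \Phi(S^\an)$. Subtracting $\dim \Phi(Y^\an)$ (resp. $\dim\Phi(S^\an)$) on both sides, the claim becomes: if $\dim D_{\G_Y} - \dim D_{\G} < \dim \Phi(Y^\an) - \dim \Phi(S^\an)$ then $\dim D_{H_Y} - \dim D_H < \dim \Phi(Y^\an) - \dim \Phi(S^\an)$ as well. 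Hence it is enough to prove the single inequality
\begin{equation*}
\dim D_{H_Y} - \dim D_{H} \;\leq\; \dim D_{\G_Y} - \dim D_{\G}\;\;,
\end{equation*}
i.e. that passing from $S$ to its subvariety $Y$ cannot drop the dimension of the \emph{weak} Mumford--Tate domain by more than it drops the dimension of the \emph{full} Mumford--Tate domain. After the reduction of \Cref{reduction} we may assume $\HH = \G^\der$, so $D_H = D$ up to the dimension of the $\LL_S$-factor, and I would argue with the product decompositions $\G^\ad = \HH^\ad \times \LL_S$ and $\G_Y^\ad = \HH_Y^\ad \times \LL_Y$: since $\HH_Y$ is a normal subgroup of $\G_Y^\der$ contained in $\HH = \G^\der$, the ``new'' directions in $D_{\G_Y}$ relative to $D$ that are not already in $D_{H_Y}$ all come from the torus-type factor $\LL_Y$, and the dimension count of these extra factors gives precisely the desired inequality. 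This is essentially the statement that $\dim D_{\G_Y} - \dim D_{H_Y} \geq \dim D_{\G} - \dim D_H$, which follows from comparing the $\LL$-parts.

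The main obstacle is making the dimension bookkeeping for the two product decompositions precise and coherent: one must check that the factor $\Gamma_{\LL_Y}\backslash D_{L_Y}$ appearing in the special closure of $Y$ ``contains'' (in the sense of contributing at least as many dimensions as) the corresponding torus-type discrepancy for $S$, using that $\HH_Y \subset \HH_S$ as subgroups of $\G$ and that the period map $\Phi$ factors compatibly through both weakly special and special closures as recalled in \Cref{preliminaries}. Once this comparison $\dim D_{\G_Y} - \dim D_{H_Y} \geq \dim D_{\G} - \dim D_H$ is in hand, the implication ``atypical $\Rightarrow$ weakly atypical'' is immediate from the rearranged inequalities above, and the contrapositive ``weakly typical $\Rightarrow$ typical'' follows formally.
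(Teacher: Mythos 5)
Your numerical reduction is correct and matches the paper's: atypicality ($\Hcd(Y,\VV)<\Hcd(S,\VV)$) implies weak atypicality ($\Mcd(Y,\VV)<\Mcd(S,\VV)$) once one shows $\dim D_{\LL_S}\leq \dim D_{\LL_Y}$, where $\G_S^\ad = \HH_S^\ad\times\LL_S$ and $\G_Y^\ad=\HH_Y^\ad\times\LL_Y$. But there are two problems with what follows.

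First, the appeal to \Cref{reduction} does not help. That remark allows one to replace $\Phi$ by its composition $\Phi'$ with the projection onto $\Gamma_\HH\backslash D_H$ when one studies the Hodge locus \emph{as a set}, because the special and weakly special subvarieties are the same for $\Phi$ and $\Phi'$. It does \emph{not} preserve the Hodge codimension: $\Hcd(Y,\VV)$ is computed against the special closure in $\Gamma\backslash D$, which changes when you shrink the target to $\Gamma_\HH\backslash D_H$. If you could legitimately assume $\HH=\G^\der$, then $\LL_S$ would be trivial, $\Hcd$ would equal $\Mcd$, and the lemma would say nothing; the whole point of the statement is precisely the case where $\LL_S$ is nontrivial and the two codimensions genuinely differ.

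Second, and more substantively, you identify $\dim D_{\LL_S}\leq\dim D_{\LL_Y}$ as the thing to prove but never prove it; you flag it as ``the main obstacle'' and gesture at a bookkeeping argument. The paper's argument at this step is not pure bookkeeping. One uses that $\tilde\Phi(\widetilde{S^\an})$ lies in the weak Mumford--Tate subdomain $D_{H_S}\times\{t\}\subset D_{H_S}\times D_{\LL_S}$, and that the projection of Hodge data $(\G_Y,D_Y)\to(\LL_S,D_{\LL_S})$ sends $D_{\LL_Y}$ to a Mumford--Tate subdomain of $D_{\LL_S}$ containing $t$. Since $t$ is Hodge \emph{generic} in $D_{\LL_S}$, any Mumford--Tate subdomain containing it must be all of $D_{\LL_S}$. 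Hence $D_{\LL_Y}$ surjects onto $D_{\LL_S}$ and the dimension inequality follows. This Hodge-genericity argument is the missing idea; without it the inequality is not a formal consequence of $\HH_Y\subset\HH_S$ or of the factorization of $\Phi$.
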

\begin{proof}
Writing as above $\G_S^\ad = \HH_S^\ad \times \LL_S$ and $\G_Y^\ad = \HH_Y^\ad
\times \LL_Y$ (the groups $\LL_S$ and $\LL_Y$ being possibly trivial) the inequality 
\begin{displaymath}
\Hcd(Y, \VV) <\Hcd (S, \VV)
\end{displaymath}
of Hodge codimensions can be rewritten in terms of
monodromic codimensions as
\begin{equation} \label{passage}
  \Mcd(Y, \VV) < \Mcd (S, \VV) + \dim D_{\LL_{S}} - \dim
  D_{\LL_{Y}}\;\;.
\end{equation}

\noi
If follows immediately that $ \Mcd(Y, \VV) < \Mcd (S, \VV)$ holds true in the case
where $\dim D_{\LL_{S}} =0$, that is if $\HH_S = \G_S^{\der}$. 
In the general case notice that the morphism of
Hodge data $(\G_Y, D_Y) \to (\G_S, D_S)$ induces by projection a morphism of Hodge
data $(\G_Y , D_Y) \to (\LL_S, D_{\LL_{S}})$. As $\tilde{\Phi}(\widetilde{S^{\an}})$ is
contained in the weak Mumford-Tate subdomain $D_{\HH_{S}} \times \{t\}
\subset D_{\HH_{S}} \times D_{\LL_{S}}$, this morphism sends
$D_{\LL_{Y}}$ to a Mumford-Tate subdomain of $D_{\LL_{S}}$ containing
the point $t$. As $t$ is Hodge generic in $D_{\LL_{S}}$ it follows
that $D_{\LL_{Y}}$ surjects onto $D_{\LL_{S}}$. Thus one always has
$\dim D_{\LL_{S}} \leq \dim
D_{\LL_{Y}}$, hence $ \Mcd(Y, \VV) < \Mcd (S, \VV)$ from (\ref{passage}).
\end{proof}

The converse of \Cref{HatypisMatyp} is not true:

\begin{example} \label{HtypMatyp}
  There exists a $\ZZ$VHS $\VV$ on a smooth quasi-projective surface
  $S$ and a special curve $Y \subset S$ for $\VV$ which is
  typical but monodromically atypical.
\end{example}

\begin{proof}
Let us denote by $Y_1 = \Gamma_1(7)
\backslash \mathbb{H}$ the classical modular curve of level $\Gamma_1(7)$, where $\mathbb{H}$
denotes the Poincar\'e half-plane (the level plays no particular role in the sequel, but it is needed to have a torsion free lattice). 
We will construct $S$ as a surface
contained in the Shimura variety $\Gamma \backslash D = Y_1^3$.
To do so let us fix a CM-point $x \in Y_1$, that is a point with Mumford-Tate
 group a torus $\TT$. Let $t \in Y_1$ be a Hodge
generic point. We define
$$Y:= Y_1 \times \{t\} \times \{x\}
\subset Y_1^3 \;\;.$$
Let us choose an irreducible algebraic curve $C \subset Y_1^2$
such that the intersection $C \cap (Y_1 \times\{x\})$ is a finite
set of points, containing the point $\{t\} \times \{x\} \in
Y_1^2$. Without loss of generality we can assume that (the
compactification of) $C$ in (the compactification $X_1^2$ of)
$Y_1^2$ is of degree high enough that the algebraic monodromy group
of $C$ is $\SL_2\times \SL_2$. Finally define
$$ S := Y_1 \times C \subset Y_1^3\;\;.$$

The surface $S$ is Hodge-generic in the Shimura variety $Y_1^3$, its
generic Mumford-Tate group $\G$ is $\GL_2 \times \GL_2 \times \GL_2$, and its algebraic
monodromy group is $\SL_2 \times \SL_2 \times \SL_2$. Its Hodge codimension is $1$, its
monodromic codimension is $1$.

The curve $Y \subset S$ is an irreducible component of the intersection of $S$ with
the special surface $Y_1^2 \times \{x\}$ of $Y_1^3$, hence is a
special curve in $S$ (although it is only a weakly special curve in
$Y_1^3$). Its generic Mumford-Tate group is $\GL_2 \times \GL_2 \times \TT$, its
algebraic monodromy group is $\SL_2 \times \{1\} \times \{1\}$. Its Hodge codimension is $1$,
its monodromic codimension is zero.

Hence the special curve $Y \subset S$ is typical but monodromically atypical.
\end{proof}

\begin{rmk} \label{reduction2}
Although \Cref{HtypMatyp} shows one has to be careful when using the notion of
(a)typicality, one can without loss of generality assume
that $\HH = \G^\der$ when studying $\HL(S, \VV)_\atyp$ and
$\HL(S, \VV)_\typ$. More precisely, with the notations of
\Cref{reduction}, we claim that a special (resp. weakly special) subvariety of $S$
for $\Phi$ is atypical (resp. monodromically atypical) if and only if
it is for $\Phi'$. Indeed let us write as above $\G_S^\ad = \HH_S^\ad \times \LL_S$ and $\G_Y^\ad = \HH_Y^\ad
\times \LL_Y$ (the groups $\LL_S$ and $\LL_Y$ being possibly
trivial) the adjoint groups of the Mumford-Tate groups for
$\Phi$. Notice that by definition the group $\LL_Y$ surjects onto $\LL_S$.
The corresponding Mumford-Tate groups for $\Phi'$ are then $\HH_S^\ad$
for $S$, and $\HH_Y^\ad \times \ker(\LL_Y \twoheadrightarrow \LL_S)$
for $Y$. In particular, although the Hodge codimensions $\Hcd(S,
\Phi)$ and $\Hcd(S, \Phi')$ (resp. $\Hcd(Y, \Phi)$ and $\Hcd(Y,
\Phi'))$ differ in general, one still has
the equality:
$$ \Hcd(S, \Phi) - \Hcd(Y, \Phi) = \Hcd(S, \Phi')- \Hcd(Y,
\Phi')\;\;.$$
Hence the result.
\end{rmk}

\subsection{Comparison with \cite{klin}}\label{newzpsection} 

\medskip
In \cite{klin} the second author defined a smaller Hodge codimension,
which will be called in this paper the {\em horizontal Hodge
  codimension}. Similarly we can also define a {\em horizontal
  monodromic codimension}.

Let $(\G, D)$ be a Hodge datum. Each point $x \in D$
defines a $\QQ$-Hodge-Lie algebra structure on the adjoint Lie algebra $\mathfrak{g}^\ad$ of
$\G$, hence a Hodge filtration $F^p_x \mathfrak{g}^\ad_\CC$ on $\mathfrak{g}^\ad_\CC$ of $\G(\CC)$. 
This Hodge filtration induces a decreasing
filtration on the tangent space $T_xD^{\vee}=\mathfrak{g}^\ad_\CC/ F_x^0
\mathfrak{g}^\ad_\CC$. These filtrations glue together to induces a
$\G(\CC)$- equivariant holomorphic decreasing filtration $F^\bullet
TD^{\vee}$, $\bullet <0$, on the $\G(\CC)$-homogeneous tangent bundle $TD^{\vee}$. The
horizontal tangent bundle $T_hD$ of $D$ is (the restriction to $D$ of) the
$\G(\CC)$-equivariant subbundle $F^{-1} TD^{\vee}$. Griffiths' transversality for VHS, i.e. the fact that
$\nabla F^\bullet \cV\subset F^{\bullet -1}\cV \otimes \Omega^1_S$,
translates into the differential constraint
$(d \Phi) (TS)  \subset T_h(\Gamma \backslash D)$. Notice that the
rank of the holomorphic vector bundle $T_h(\Gamma \backslash D)$
equals the dimension of $\mathfrak{g}_{Y}^{-1}=\mathfrak{g}_{Y}^{-1,1}$.

\begin{defi}[{\cite[Definition 1.5 and 1.7]{klin}}]\label{defatyklin}
Let $\VV$ be a polarizable $\ZZ$VHS on a smooth connected complex quasi-projective variety
$S$ and let $Y \subset
S$ be a closed irreducible algebraic subvariety, with generic
Mumford-Tate datum $(\G_{Y}, D_Y)$ for $\VV$ . The \emph{horizontal
  Hodge codimension} of $Y$ for $\VV$ is 
\begin{displaymath}
\hHcd(Y,\VV):=\dim_\C \left(\mathfrak{g}_{Y}^{-1} \right) -\dim \Phi (Y^{\an})\;.
\end{displaymath}
Similarly the \emph{horizontal
  monodromy codimension} of $Y$ for $\VV$ is 
\begin{displaymath}
\hMcd(Y,\VV):=\dim_\C \left(\mathfrak{h}_{Y}^{-1} \right) -\dim \Phi (Y^{\an})\;.
\end{displaymath}

\end{defi}

\begin{defi}
A closed irreducible algebraic subvariety $Y\subset S$ is said to be \emph{
  horizontally atypical} for $\VV$ if
\begin{displaymath}
\hHcd(Y,\VV)<\hHcd(S,\VV).
\end{displaymath}

It is said to be {\em horizontally monodromically atypical} for $\VV$ if
\begin{displaymath}
\hMcd(Y,\VV)<\hMcd(S,\VV).
\end{displaymath}
\end{defi}

\begin{lem}\label{rmk1zp}
  Let $\VV$ be a polarizable $\ZZ$VHS on a smooth complex quasi-projective
  variety $S$. Then:
  \begin{itemize}
    \item[(a)] For any closed irreducible algebraic subvariety $Y
  \subset S$:
\begin{displaymath}
\hHcd(Y,\VV) \leq \Hcd(Y,\VV)\quad \textnormal{and} \quad \hMcd(Y,\VV) \leq \Mcd(Y,\VV)\;\;;
\end{displaymath}
\item[(b)] $\hHcd(Y,\VV) =\Hcd(Y,\VV)$ if and only if $(\G_Y,D_Y)$ is of Shimura
type;
\item[(c)]
A subvariety which is horizontally atypical (resp. horizontally monodromically atypical)
 is atypical (resp. monodromically atypical).
\end{itemize}
\end{lem}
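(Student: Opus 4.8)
The plan is to reduce all three statements to a single identity relating the four codimensions to the dimensions of the graded pieces of the relevant Hodge--Lie algebras. First I would fix a point $x$ that is Hodge generic in $Y$, so that $\fg_Y^\ad$ and $\fh_Y$ acquire their weight-zero $\QQ$-Hodge--Lie structures, with Hodge decompositions $\fg_{Y,\CC}^\ad=\bigoplus_{i}\fg_Y^i$ and $\fh_{Y,\CC}=\bigoplus_i\fh_Y^i$; here $\fh_Y$ genuinely is a Hodge--Lie subalgebra of $\fg_Y^\ad$ because, by Deligne's semisimplicity and fixed-part theorems, $\HH_Y$ is normal in $\G_Y^\der$, hence $\fh_Y$ is an $\Ad(h_x)$-stable ideal. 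Using the description of the tangent space $T_xD^\vee=\fg_\CC^\ad/F_x^0\fg_\CC^\ad$ recalled in \Cref{newzpsection} (applied to $\G_Y$ and to $\HH_Y$), one gets $\dim D_{\G_Y}=\sum_{i\ge 1}\dim_\CC\fg_Y^{-i}$ and $\dim D_{H_Y}=\sum_{i\ge 1}\dim_\CC\fh_Y^{-i}$. Subtracting $\dim\Phi(Y^\an)$ from both sides yields
\begin{equation*}
\Hcd(Y,\VV)=\hHcd(Y,\VV)+e_Y,\qquad \Mcd(Y,\VV)=\hMcd(Y,\VV)+f_Y,
\end{equation*}
where $e_Y:=\sum_{i\ge 2}\dim_\CC\fg_Y^{-i}\ge 0$ and $f_Y:=\sum_{i\ge 2}\dim_\CC\fh_Y^{-i}\ge 0$. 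Part (a) follows at once.

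For (b) I would note that $\hHcd(Y,\VV)=\Hcd(Y,\VV)$ is equivalent to $e_Y=0$, i.e. to $\fg_Y^{-i}=0$ for all $i\ge 2$; by the conjugation symmetry $\fg_Y^{i}=\overline{\fg_Y^{-i}}$ of \Cref{HL-algebra} this says exactly that the $\QQ$-Hodge structure on $\fg_Y^\ad$ has type contained in $\{(1,-1),(0,0),(-1,1)\}$, equivalently that it has level at most one, which is precisely the condition that $(\G_Y,D_Y)$ be a Hodge datum of Shimura type (equivalently, that $D_{\G_Y}$ be a Hermitian symmetric domain).

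For (c) the key point is the pair of inequalities $e_Y\le e_S$ and $f_Y\le f_S$. Indeed $\G_Y\subseteq\G_S$, and choosing $x$ Hodge generic in $Y$ the inclusion $\fg_Y^\ad\subseteq\fg_S^\ad$ is a morphism of $\QQ$-Hodge structures (both gradings are induced by $\Ad\circ h_x$), so $\fg_Y^{-i}\subseteq\fg_S^{-i}$ for every $i$ and $e_Y\le e_S$; likewise $\HH_Y\subseteq\HH_S$ (the monodromy of $\VV_{|Y}$ is a subgroup of that of $\VV$) gives $\fh_Y\subseteq\fh_S$ and $f_Y\le f_S$. Then, combining with the first paragraph,
\begin{equation*}
\Hcd(S,\VV)-\Hcd(Y,\VV)=\bigl(\hHcd(S,\VV)-\hHcd(Y,\VV)\bigr)+(e_S-e_Y)\ge \hHcd(S,\VV)-\hHcd(Y,\VV),
\end{equation*}
so horizontal atypicality of $Y$ forces $\Hcd(Y,\VV)<\Hcd(S,\VV)$, i.e. $Y$ is atypical (through the excess-intersection clause of \Cref{atypical-general}); the weakly atypical case is proved verbatim with $\Mcd,\hMcd,f$ in place of $\Hcd,\hHcd,e$.

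I do not anticipate a serious obstacle. The only points requiring care are the compatibility of the Hodge--Lie gradings under $\G_Y\subseteq\G_S$ and $\HH_Y\subseteq\HH_S$ (so that the graded pieces of a subalgebra are the intersections with those of the ambient one) and the fact that $\fh_Y$ carries a Hodge--Lie structure at all; both are handled by the normality statements (Deligne, together with $\HH_Y\trianglelefteq\G_Y^\der\subseteq\G_S^\der$) and by the standard fact, noted just before \Cref{levelVHS}, that these Hodge--Lie structures are independent, up to isomorphism, of the chosen Hodge generic point.
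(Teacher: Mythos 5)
Your proof is correct and follows essentially the same route as the paper's: you write $\Hcd=\hHcd+\sum_{k\ge 2}\dim\fg^{-k}$ (and the monodromic analogue), observe $\fg_Y^{-k}\subseteq\fg_S^{-k}$ (resp. $\fh_Y^{-k}\subseteq\fh_S^{-k}$), and deduce (a), (b), (c) from this single identity. You are a bit more explicit than the paper about why the gradings on $\fg_Y$ and $\fg_S$ (resp. $\fh_Y$ and $\fh_S$) can be taken compatibly — by using a single Hodge-generic point of $Y$ — and about why $\fh_Y$ inherits a Hodge--Lie structure; these are useful clarifications but not genuinely different ideas. Your phrasing of (b) (level $\le 1$, i.e. $\fg_Y^{-i}=0$ for $i\ge 2$, equivalently $F^{-1}\fg_{Y,\CC}=\fg_{Y,\CC}$) also silently corrects the paper's misprint ``$F^1\fg_{Y,\CC}=\fg_{Y,\CC}$''.
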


\begin{proof}

  For $(a)$: The inequality $\hHcd(Y,\VV) \leq \Hcd(Y,\VV)$ follows immediately
  from the obvious inequality $\dim T_h
  D_{\G_{Y}} \leq \dim T D_{\G_{Y}}$. Similarly $\hMcd(Y,\VV) \leq \Mcd(Y,\VV)$ follows immediately
  from $\dim T_h
  D_{H_{Y}} \leq \dim T D_{H_{Y}}$.

  For $(b)$: The equality $\hHcd(Y,\VV) = \Hcd(Y,\VV)$ implies that $F^1
  \mathfrak{g}_{Y,\CC} = \mathfrak{g}_{Y, \CC} $, i.e. $(\G_Y,D_Y)$ is
    of Shimura type.

For $(c)$: Suppose hat $Y \subset S$ is a closed irreducible algebraic
subvariety which is horizontally atypical for $\VV$, that is
$\hHcd(Y, \VV) <\hHcd(S, \VV)$. As
\begin{displaymath}
\Hcd(Y, \VV) = \hHcd(Y, \VV) +
\sum_{k\geq 2} \fg^{-k}_Y
\end{displaymath}
 and $\Hcd(S, \VV) = \hHcd(S, \VV) +
\sum_{k\geq 2} \fg^{-k}_S$, the inequality 
$\Hcd(Y,\VV)<\Hcd(S,\VV)$ follows immediately from the inclusions
$\mathfrak{g}_Y^{-k} \subset \mathfrak{g}_S^{-k}$
for all $k\geq 2$. Thus $Y$ is atypical for $\VV$.
The assertion concerning monodromically atypicality is proved in the same
way, replacing the Lie algebra of the generic Mumford-Tate group with
the Lie algebra of the algebraic monodromy group.
\end{proof}

\subsection{Zilber--Pink Conjecture (strong form)}

Let us now state the Zilber-Pink conjecture for $\ZZ$VHS. It is an
enhanced version (using atypicality rather than horizontal
atypicality) of the Main Conjecture proposed in \cite{klin} (see Conjectures 1.9, 1.10,
1.11 and 1.11 in \emph{op. cit.}). In view of \Cref{rmk1zp} it implies
the Main Conjecture in {\em loc. cit}.

\begin{conj}\label{newZP}
For any irreducible smooth quasi-projective variety $S$ endowed with a
polarizable variation of Hodge structures $\VV\to S$, the following
equivalent conditions hold true: 
\begin{itemize}
\item[(a)] The subset $\HL(S, \VV^\otimes)_\atyp$ is a finite union
  of maximal atypical special subvarieties of $S$;
\item[(b)] The subset $\HL(S, \VV^\otimes)_\atyp$ is a strict algebraic subvariety of $S$;
\item[(c)] The subset $\HL(S, \VV^\otimes)_\atyp$ is not Zariski-dense in $S$;
\item[(d)] The variety $S$ contains only finitely many irreducible subvarieties optimal for $\VV$.
\end{itemize}
\end{conj}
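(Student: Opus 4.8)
The plan is to establish the cyclic chain of implications $(a)\Rightarrow(b)\Rightarrow(c)\Rightarrow(d)\Rightarrow(a)$. The first two implications I expect to be purely formal. By \Cref{CDK} every special subvariety of $S$ for $\VV$ is a closed algebraic subvariety, and it is a \emph{strict} one: the generic Mumford--Tate group is attained on a dense Zariski-open subset, so $S$ is Hodge-generic in itself and $S\notin\HL(S,\VV^\otimes)$; moreover $S$ is never atypical (the inequality $\Hcd(S,\VV)<\Hcd(S,\VV)$ fails and $S^{\sing}_\VV$ is a strict subvariety). Hence a finite union of maximal atypical special subvarieties is a strict closed algebraic subvariety, and a strict closed algebraic subvariety of the irreducible $S$ cannot be Zariski-dense.

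The technical backbone of the remaining implications will be the identity
\[\HL(S,\VV^\otimes)_\atyp=\bigcup\{\,Y\subsetneq S \ :\ Y \text{ optimal for }\VV\,\},\]
which I would prove as follows. Given any atypical $Y$, I choose among the subvarieties $Y'\supseteq Y$ one with $\Hcd(Y',\VV)$ minimal and then, among those, one maximal for inclusion; such a $Y'$ is optimal, satisfies $\Hcd(Y',\VV)\le\Hcd(Y,\VV)<\Hcd(S,\VV)$ so $Y'\subsetneq S$, and by the Ax--Schanuel theorem \Cref{astheorem} (applied to the graph of $\Phi$ over $Y'$; the minimality of the defect forces the weakly special and special closures of $Y'$ to coincide) $Y'$ equals its special closure, i.e. it is an atypical special subvariety. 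Conversely any optimal $Y\subsetneq S$ has $\Hcd(Y,\VV)<\Hcd(S,\VV)$ (take $Y'=S$ in the definition of optimality) and is special by the same input, hence atypical special. The same reasoning shows that the maximal members of the right-hand side are exactly the maximal atypical special subvarieties of $S$ (if $Y$ is a maximal atypical special subvariety and $Y\subsetneq Y'$ with $\Hcd(Y',\VV)\le\Hcd(Y,\VV)$, then $\Hcd(Y',\VV)<\Hcd(S,\VV)$, so the special closure of $Y'$ is an atypical special subvariety strictly containing $Y$, a contradiction, so $Y$ is optimal). From this identity the implications $(d)\Rightarrow(b)$, $(d)\Rightarrow(a)$ and $(a)\Rightarrow(d)$ are immediate, so only $(c)\Rightarrow(d)$ remains.

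For $(c)\Rightarrow(d)$ I would argue by induction on $\dim S$. Assuming $\HL(S,\VV^\otimes)_\atyp$ is not Zariski-dense, let $Z_1,\dots,Z_m$ be the finitely many irreducible components of its Zariski closure. By the identity above, every optimal $Y\subsetneq S$ lies in some $Z_i$, and, the Hodge codimension being intrinsic to $Y$, $Y$ is again optimal for $(Z_i,\VV_{|Z_i})$ once one checks that $\Hcd(Z_i,\VV)\ge\Hcd(S,\VV)$ whenever $Z_i$ is not itself atypical. If $Z_i$ is not a special subvariety, then being irreducible and contained in the countable union of special subvarieties above it is contained in one of them, hence equals a single maximal atypical special subvariety; if $Z_i$ is an atypical special subvariety the same holds; if $Z_i$ is a typical special subvariety of positive codimension one passes to $(Z_i,\VV_{|Z_i})$ — of strictly smaller dimension — and invokes the inductive hypothesis, together with the geometric Zilber--Pink theorem \Cref{geometricZP}, which bounds the part of the atypical locus of positive period dimension (its conclusion forces the Zariski closure of $\HL(S,\VV^\otimes)_{\pos,\atyp}$ to have finitely many components, each a maximal atypical special subvariety or of the product type described there).

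I expect the main, and essentially the only, obstacle to be exactly the optimal subvarieties of \emph{zero} period dimension, i.e. those contracted to points by $\Phi$: \Cref{astheorem} gives no leverage on them, \Cref{geometricZP} is silent about them, and bounding them is precisely the portion of \Cref{newZP} left open here. Thus the cycle $(a)\Leftrightarrow(b)\Leftrightarrow(c)\Leftrightarrow(d)$ closes unconditionally only modulo the finiteness of the atypical locus of zero period dimension (equivalently, modulo the relevant Andr\'e--Oort/Zilber--Pink statements for the zero-dimensional strata), in line with the scope of the other main results of the paper.
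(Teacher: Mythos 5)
The paper itself offers no proof of this statement: it is labelled as a \emph{conjecture}, and the only thing the paper asserts about it is that ``the equivalence between the four conditions... are obtained arguing as in [klin, Proposition 5.1],'' i.e.\ a one-line citation with no details. So there is no substantive argument of the paper to match your proposal against. Within your own proposal, however, there are genuine gaps. First, the claim that ``$(a)\Rightarrow(d)$ is immediate from the identity'' is not correct. The identity $\HL(S,\VV^\otimes)_\atyp=\bigcup\{Y\subsetneq S : Y \text{ optimal}\}$ is an equality of \emph{underlying sets}, and knowing that this set is a finite union of maximal atypical special subvarieties $Z_1,\dots,Z_n$ tells you only that there are finitely many \emph{maximal} optimal subvarieties (since, as you argue, maximal optimal $=$ maximal atypical special). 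It says nothing about the number of \emph{non-maximal} optimal subvarieties: a single maximal $Z_i$ can a priori contain infinitely many strictly smaller optimal subvarieties (e.g.\ CM points of zero period dimension, which are optimal in $S$ as soon as every strictly larger subvariety has positive Hodge codimension). Forcing finiteness of these requires applying the conjecture recursively to $(Z_i,\VV_{|Z_i})$, i.e.\ it is not a formal consequence of $(a)$ for $(S,\VV)$ alone. This is precisely the same difficulty you acknowledge in $(c)\Rightarrow(d)$, so your chain $(a)\Rightarrow(b)\Rightarrow(c)\Rightarrow(d)\Rightarrow(a)$ does not actually close even modulo the zero-dimensional atypical locus.

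Two further, smaller issues. Your proof of the identity does not address the singular branch of \Cref{atypical}: a special $Y\subset S^\sing_\VV$ is atypical by definition, yet may satisfy $\Hcd(Y,\VV)\ge\Hcd(S,\VV)$, in which case your minimize-then-maximize construction can return $Y'=S$ rather than a strict optimal subvariety, so the inclusion $\HL(S,\VV^\otimes)_\atyp\subset\bigcup\{\text{strict optimal}\}$ is not established in that case. Finally, invoking Ax--Schanuel (\Cref{astheorem}) to show that an optimal $Y'$ is special is unnecessary: by \Cref{special}, the special closure $Y^{\sp}$ of $Y'$ satisfies $\G_{Y^{\sp}}=\G_{Y'}$ and $\dim\Phi((Y^{\sp})^{\an})\ge\dim\Phi((Y')^{\an})$, hence $\Hcd(Y^{\sp},\VV)\le\Hcd(Y',\VV)$, and optimality of $Y'$ forces $Y'=Y^{\sp}$ outright. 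This purely formal argument is, presumably, part of what [klin, Proposition 5.1] supplies, and using functional transcendence here obscures the fact that the equivalence of $(a)$--$(d)$ is supposed to be an elementary bookkeeping statement, independent of Ax--Schanuel and of the truth of the conjecture itself.
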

The equivalence between the four conditions presented above is obtained by arguing as in \cite[Proposition 5.1]{klin}.

\begin{rmk}
Strictly speaking a Zilber--Pink type conjecture predicts only the
behaviour of atypical intersections. It should be implicitly understood that every other
intersection (namely a typical intersection) has a more predictable
behaviour and satisfies an all or nothing principle: this is the content
of \Cref{conj-typical}. Once this is
understood the difference between \Cref{newZP} and the Main Conjecture
of \cite{klin} becomes crucial, since they would provide two different
definitions of typicality. 
\end{rmk}

\subsection{An example: two-attractors}

In this section we describe a concrete example which is
atypical, and hence covered by \Cref{newZP}, but not
horizontally atypical hence not covered by the weaker \cite[Conjecture
1.9]{klin}. This example already attracted some attention in the
literature, and the link with the Zilber--Pink conjecture could give a
strategy for attacking it.

\medskip
Consider a Calabi-Yau Hodge structure $V$ of weight $3$ with Hodge numbers
$h^{3,0}= h^{2,1}=1$ (for instance the Hodge structure given by the mirror dual
quintic). Its universal deformation space is a quasi-projective curve
$S$, since $h^{2,1}=1$ (the projective line minus three
points in the case of the mirror dual
quintic), which carries a $\Z$VHS $\VV$ of the same type. This gives a
non-trivial period
map $ \Phi: S^{\an} \to \Gamma \backslash D$, where $D=
\mathbf{Sp}(4, \RR)/U(1)\times U(1)$. The Hodge variety $\Gamma
\backslash D$ is a complex
manifold of dimension $4$, while its horizontal tangent bundle has
rank $2$ (see for example \cite[page 257-258]{MR2918237}). The
analytic curve $\Phi(S^\an)$ is known to be Hodge generic in $\Gamma \backslash D$ (in fact its algebraic monodromy is
$\mathbf{Sp}_4$). Thus $\hHcd(S, \VV) = 2-1
=1$ while $\Hcd(S, \VV) = 4-1=3$.

\medskip
Consider the special points $s \in S$
where the weight 3 Hodge structure  
\begin{displaymath}
\VV_{s, \C}= V_s^{3,0} \oplus V_s^{2,1} \oplus V_s^{1,2}  \oplus V_s^{0,3}
\end{displaymath}
splits as a sum of two twisted weight one Hodge structures: $(V_s^{2,1}
\oplus V_s^{1,2})$ and its orthogonal for the Hodge metric $ (V_s^{3,0}
\oplus V_s^{0,3})$. Following Moore \cite{moorepaper,
  moorenotes}, such points $s\in S$ are referred to as
\emph{two-attractors}. They are irreducible components of
the intersection of $\Phi(S^\an)$ with a product $Y_n\subset \Gamma\backslash D$ of two modular curves (one of them non-horizontally embedded).

\medskip
If the two-attractor $s\in S$ is moreover a CM-point then $\hHcd(s, \VV) =0-0=
0$, therefore $s$ is horizontally atypical (hence also atypical). On the other hand if the
two-attractor $s\in S$ is not CM then $\hHcd(s, \VV) = 1-0=1$ and
$\Hcd(s, \VV)= 2-0=2$. Hence $s$
is horizontally-typical, but atypical.

\medskip
We remark here that \cite[Conjecture 1.9]{klin} predicts that the subset of CM two-attractor
points in $S$ is finite, but says nothing for the full
two-attractor locus. On the other hand \Cref{newZP} predicts that the full two-attractor locus in $S$ is finite. It agrees with the
expectation of  \cite[(page 44)]{4authors}, where the authors propose
some evidence for the finiteness of the two-attractors on $S$.

\section{Atypical Locus -- The Geometric Zilber--Pink conjecture}\label{atypicalsection}
In this section we first prove a version of \Cref{geometricZP} for the weakly 
atypical weakly special locus $\HL(S, \VV^{\otimes})_{\ws, \watyp}$:
the maximal monodromically atypical weakly special subvarieties of positive
period dimension arise in a finite number of families whose geometry
we control. The proof is mainly inspired by the arguments
appearing in \cite[Theorem 4.1]{MR3177266}, \cite[Theorem
1.2.1]{bu} and the work of Daw and Ren \cite{MR3867286}. In \Cref{proofGeometricZP} we explain how the
proof adapts to the case of $\HL(S, \VV^\otimes)_{\pos, \atyp}$, thus 
proving \Cref{geometricZP}.

\begin{theor}[Geometric Zilber--Pink conjecture for the weakly special atypical locus]\label{geometricZPweakly}
Let $\VV$ be a polarizable $\ZZ$VHS on a smooth connected complex quasi-projective variety
$S$, with generic Hodge datum $(\G, D)$.
Let $Z$ be an irreducible component of the Zariski closure
$\overline{\HL(S, \VV^{\otimes})_{\ws, \watyp}}^\Zar$ in $S$. Then:

\begin{itemize}
  \item[(a)] Either $Z$ is a maximal monodromically atypical special subvariety;
    \item[(b)] Or the adjoint Mumford-Tate group $\G_Z^\ad$
  decomposes as a non-trivial product $\HH^\ad_Z \times \LL_Z$; $Z$ contains a Zariski-dense
set of fibers of $\Phi_{\LL_{Z}}$ which are monodromically atypical weakly special
subvarieties of $S$ for $\Phi$, where (possibly up to an \'{e}tale covering) $$
\Phi_{|Z^\an}= (\Phi_{\HH_{Z}}, \Phi_{\LL_{Z}}): Z^\an \to  \Gamma_{\G_{Z}}\backslash D_{G_{Z}}= \Gamma_{\HH_{Z}}
\backslash D_{H_{Z}} \times  \Gamma_{\LL_{Z}}\backslash D_{L_{Z}} \subset \Gamma \backslash
D
\;\;;$$
and $Z$ is Hodge generic in a special subvariety $\Phi^{-1}(
\Gamma_{\G_{Z}}\backslash D_{G_{Z}})^0$ of $S$ for $\Phi$ which is monodromically typical (and therefore typical). 
\end{itemize}

\end{theor}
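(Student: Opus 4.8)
The goal is to prove \Cref{geometricZPweakly}, an Ax-Schanuel-style Zilber--Pink statement for the weakly atypical weakly special locus. The strategy is the one that has become standard since Habegger--Pila and Daw--Ren: reduce the geometric problem to a functional-transcendence input (here \Cref{astheorem}, Bakker--Tsimerman) applied to a cleverly chosen family of intersections, and extract from it the finiteness/algebraicity statement by a Noetherian/Baire-type argument on the Zariski closure $Z$. Concretely, I would argue as follows.

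\textit{Step 1 (Setup and the family of weakly special data).} Fix $Z$, an irreducible component of $\overline{\HL(S,\VV^\otimes)_{\ws,\watyp}}^{\Zar}$. By definition $Z$ contains a Zariski-dense union of positive-period-dimension weakly atypical weakly special subvarieties $Y_\alpha \subset S$. Each $Y_\alpha$ is an irreducible component of $\Phi^{-1}(\Gamma_{\HH_{Y_\alpha}} \backslash D_{H_{Y_\alpha}})$ and carries a weak Hodge subdatum $(\HH_{Y_\alpha}, D_{H_{Y_\alpha}}) \subset (\G_Z^{\ad}, D_{G_Z})$ (after restricting everything to the generic datum of $Z$, using \Cref{reduction}). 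One first discards the $Y_\alpha$ that are singular for $\VV$: these live in $S^{\sing}_\VV$, which is a \emph{proper algebraic} subvariety, so if infinitely many $Y_\alpha$ of positive period dimension were singular, their Zariski closure would already be a proper algebraic subvariety and we could replace $Z$ by a component of $S^{\sing}_\VV$; by Noetherian induction this case is handled separately (it is where conclusion (a) with ``$Z \subset S^{\sing}_\VV$'' can be forced). So we may assume the $Y_\alpha$ are weakly atypical in the codimension sense: $\Mcd(Y_\alpha,\VV) < \Mcd(S,\VV)$.

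\textit{Step 2 (Packaging as intersections in $S \times D_H$ and applying Ax--Schanuel).} The weak Hodge subdata $(\HH_{Y_\alpha}, D_{H_{Y_\alpha}})$ vary, but their conjugacy classes are bounded: there are only countably many such subdata up to $\Gamma_\HH$-conjugacy, and — using the definability of $\Phi$ in $\Ranexp$ from \cite{BKT} and a parameter space for the relevant flag subvarieties of $D_H^\vee$ — one organizes them into finitely many algebraic families $\{W_b \subset S \times D_H\}_{b \in B}$, where $W_b$ is the (algebraic) locus cut out by a ``candidate'' weakly special datum and $B$ is an algebraic parameter variety. For $b$ corresponding to $Y_\alpha$, the graph of $\Phi$ over $Y_\alpha$ is an irreducible analytic component $U$ of $W_b \cap (S \times_{\Gamma_\HH \backslash D_H} D_H)$, and the weak atypicality inequality $\Mcd(Y_\alpha,\VV) < \Mcd(S,\VV)$ translates exactly into the codimension hypothesis
\[
\codim_{S \times D_H} U < \codim_{S \times D_H} W_b + \codim_{S \times D_H}(S \times_{\Gamma_\HH \backslash D_H} D_H).
\]
\Cref{astheorem} then tells us the projection of $U$ to $S$ — i.e. $Y_\alpha$ — is contained in a \emph{strict weakly special} subvariety $S_\alpha$ of $S$ for $\VV$. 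If $S_\alpha \subsetneq S$ this already constrains $Y_\alpha$; the content is that as $\alpha$ varies with $Y_\alpha$ Zariski-dense in $Z$, either finitely many $S_\alpha$ suffice (giving that $Z$ itself is one of these weakly special subvarieties, and then one checks it is maximal weakly atypical special — conclusion (a)) or the $S_\alpha$ genuinely vary in a positive-dimensional family.

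\textit{Step 3 (From a moving family of weakly special subvarieties to a product decomposition).} This is the heart and the main obstacle. When the ambient weakly special subvarieties $S_\alpha \supset Y_\alpha$ move in a positive-dimensional family whose union is Zariski-dense in $Z$, one must show that the generic datum $(\G_Z^{\ad}, D_{G_Z})$ \emph{splits} as a product $\HH_Z^{\ad} \times \LL_Z$ and that $Z$ is, up to the finite étale cover, a product over which $\Phi$ factors as $(\Phi_{\HH_Z}, \Phi_{\LL_Z})$, with the $Y_\alpha$ being (dense among) fibers of $\Phi_{\LL_Z}$. The mechanism: a positive-dimensional family of weakly special subvarieties forces a positive-dimensional family of \emph{conjugate} weak Hodge subdata inside $(\G_Z^{\ad}, D_{G_Z})$; by the rigidity of Hodge-Lie algebra data (the normalizer/centralizer argument, using that $\fg_Z^{\ad}$ is a $\QQ$-Hodge-Lie algebra, cf. \Cref{complex}), such a family can only arise if the relevant normal subgroup $\HH$ splits off a complementary factor $\LL_Z$ and the family is the orbit of fibering over $\Gamma_{\LL_Z}\backslash D_{L_Z}$. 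The weak typicality of the special closure $\Phi^{-1}(\Gamma_{\G_Z}\backslash D_{G_Z})^0$ — and hence, via \Cref{HatypisMatyp}, its typicality — comes from a dimension count: the atypicality of each $Y_\alpha$ is ``spent'' in the $\LL_Z$-direction (it becomes a fiber, of codimension $\dim D_{L_Z}$), so no excess intersection remains in the $\HH_Z$-direction, i.e. the defect inequality becomes an equality for the special closure of $Z$. I expect Step 3 to require the most care: controlling families of sub-Hodge-data and ruling out ``spurious'' moving families that are not products is exactly the delicate input, and it is where the arguments of \cite{MR3177266}, \cite{bu} and \cite{MR3867286} are adapted. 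Finally, one transfers the conclusion from weakly special/monodromic codimension to special/Hodge codimension in \Cref{proofGeometricZP} to get \Cref{geometricZP}, using \Cref{rmk1zp} and the comparison \eqref{passage}.
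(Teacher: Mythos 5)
Your overall strategy (reduce to Ax--Schanuel via o-minimality, then extract the product decomposition from a moving family of weakly special subvarieties) is correct in spirit and matches the paper's approach, but there are two substantive gaps in your Step 2 and one misplaced emphasis.

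The central issue is that your Step 2 asserts that the weak Hodge subdata can be ``organized into finitely many algebraic families $\{W_b\}_{b\in B}$'' and then applies Ax--Schanuel to each. But the finiteness of the relevant parameter set is not an input; it is precisely what must be proved, and it is the hardest part of the argument. The paper's mechanism (Proposition~\ref{mainpropwithas}) is the ``definable $+$ countable $\Rightarrow$ finite'' argument: for a fixed weak Hodge datum $(\MM,D_M)$ and fixed period dimension $m$, one forms the set $\Sigma^m(\MM,D_M)\subset H/N_H(M)$ of conjugates $gMg^{-1}$ that can realize an atypical intersection of period dimension $\geq m$; this set is \emph{definable} in $\RR_{\an,\exp}$ via the definable fundamental set of \Cref{fundset}, and Ax--Schanuel (applied to the algebraic set $S\times g\cdot D_M$) shows that each such $g\cdot D_M$ must descend to a genuine weakly special subvariety of $\Gamma\backslash D$, which forces $\Sigma^m(\MM,D_M)$ to be \emph{countable}; definable $+$ countable yields finite. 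Your write-up gestures at definability and countability but does not connect them into this argument; as stated, the ``finitely many families'' claim is either circular or simply unjustified, since a priori $B$ is a continuous parameter space like $H/N_H(M)$. Relatedly, you also omit the downward induction on the period dimension $j$ (the sets $\Sigma_2(\HH_i,D_{H_i})^j$ in the paper), which is needed because one can only directly control the maximal stratum $j=m$.

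Your Step~3 intuition is broadly right, but the precise mechanism is simpler than you suggest: once $\Sigma^m(\MM,D_M)$ is finite and some $(\MM,D_M)$ supports a Zariski-dense union of $Y_\alpha$ in $Z$, the dichotomy is driven by whether the centralizer $\LL:=\mathbf{Z}_{\G^\ad}(\MM^\ad)$ is compact (then the relevant set of weakly specials $\cE_m(\MM,D_M)$ is finite, $Z$ equals one of them, branch~(a)) or not (then $(\MM^\ad\cdot\LL,\, Z_G(M)\cdot D_M)$ is a strict Hodge subdatum of product type sitting between $(\MM,D_M)$ and $(\G^\ad,D)$, giving branch~(b)). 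The weak typicality of $Z'=\Phi^{-1}(\Gamma_{\G_Z}\backslash D_{G_Z})^0$ is then forced by the maximality of $Z$ among irreducible components of the Zariski closure, not by a direct dimension count as you sketch. Finally, you identify Step~3 as ``the heart and the main obstacle,'' but in the paper's proof the genuine technical weight is in the finiteness of $\Sigma^m$ (Step~2); Step~3 is a comparatively short centralizer argument.
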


\begin{rmk}
\Cref{geometricZPweakly} is more satisfying than \Cref{geometricZP}:
while the second branch of the alternative in \Cref{geometricZP}
should not occur due to \Cref{main conj},
\Cref{geometricZPweakly} is a complete description of varieties
containing a Zariski-dense set of monodromically atypical weakly
special subvarieties. 
\end{rmk}

\subsection{Preliminaries for the proof of \Cref{geometricZPweakly}} 

\subsubsection{Definable fundamental sets}\label{fundset}
Let $\Phi: S  \to \Gamma_\HH \backslash D_H$ be the period map for $\VV$.
We refer the reader to \cite{MR1633348} for the notion of o-minimal
structure and to \cite{BKT} for its use in Hodge theory. As recalled in \Cref{zassection},
the Mumford-Tate domain $D_H$ is a real semi-algebraic open subset of
its compact dual $D_H^\vee$ and that its 
\emph{algebraic subvarieties} are the irreducible complex analytic
components of the intersections with $D_H$ or complex algebraic
subvarieties of $D_H^\vee$. Recall also that $S$ being an algebraic
variety, it is naturally definable in any extension of
the o-minimal structure $\R_{\operatorname{alg}}$. {\it From now on,
definable will be always understood in the o-minimal structure
$\R_{\an, \exp}$.}  

\medskip
Let us introduce a notion of ``definable fundamental set''  of $S$ for $\Phi$, arguing as at the beginning of
\cite[Section 3]{MR3958791}. Let $(\overline{S},E)$ be a log-smooth
compactification of $S$, and choose a definable atlas of $S$ by
finitely many polydisks $\Delta^k \times (\Delta ^*)^\ell$. Let 
\begin{displaymath}
\exp : \Delta^k \times \Hh^\ell \to \Delta^k \times (\Delta ^*)^\ell
\end{displaymath}
be the standard universal cover, and choose 
\begin{displaymath}
\Sigma = [-b,b] \times
[1, + \infty[
\subset \Hh
\end{displaymath}
such that $\Delta^k \times \Sigma ^\ell$ is a fundamental
set for the $\ZZ^{\ell}$-action by covering transformation. Let $\F$ be the disjoint union of
$\Delta^k \times \Sigma^\ell $ over all charts and choose lifts
$\Delta^k \times \Hh^\ell  \to D$ of the period map restricted to each
chart to obtain a lift $\tilde{\Phi}=\tilde{\Phi}_\F : \F
\to D_H$. From the Nilpotent Orbit Theorem \cite[(4.12)]{sch73} we
obtain the following diagram in the category of definable complex
manifolds: 

\begin{center}
\begin{tikzpicture}[scale=2]
\node (A) at (-1,1) {$\F$};
\node (B) at (0,1) {$D_H$};
\node (C) at (-1,0) {$S^{\an}$};
\path[->,font=\scriptsize,>=angle 90]
(A) edge node[above]{$\tilde{\Phi}$} (B)
(A) edge node[right]{$\exp$} (C);
\end{tikzpicture}.
\end{center}

\subsubsection{Notation for the proof of \Cref{geometricZPweakly}}
Let $(\MM, D_M)$ be a weak Hodge subdatum of $(\HH, D_H)$, and set $M=
\MM(\R)^+ \subset H=\HH(\R)^+$. We recall here that $M$ and $H$ are
semialgebraic, that is definable in $\R_{\alg}$. 
We introduce the set
\begin{equation} \label{defiPi}
\Pi(\MM, D_M):=\{(x,g) \in \F \times H\; |\;  \tilde{\Phi}(x) \in
g\cdot D_M\}\;\;,
\end{equation}
where $\F$ denotes the definable fundamental set constructed in
\Cref{fundset}. Notice that if $(x, g) \in \Pi(\MM, D_M)$ then $g \cdot M
g^{-1}\tilde{\Phi}(x) = g \cdot D_M$ is, following the conventions from
\Cref{zassection}, an algebraic subvariety of $D_H$. Denote by 
\begin{displaymath}
\pi : D_H \to \Gamma_{\HH}\backslash D_H
\end{displaymath}
the projection map. We have that $\pi (gMg^{-1}\cdot \tilde{\Phi}(x)) \subset \Gamma_\HH
\backslash D_H$ is closed in $\Gamma_\HH \backslash D_H$ only if it  
defines a weakly special subvariety of $\Gamma \backslash D$.

By ``dimension'' we will always mean the complex
dimension (and possibly the local dimension at some point). Consider the function
$$
d : \Pi(\MM, D_M) \to \RR, \qquad (x,g) \mapsto d(x, g):=
\dim_{\tilde{\Phi}(x)} \left(g \cdot D_M \cap \tilde{\Phi}(\F)\right). 
$$
It defines a natural decreasing filtration (for $0 \leq j <n := \dim \Phi(S^\an)$)
\begin{equation}
\Pi^{j}(\MM, D_M):=\{(x,g) \in \Pi(\MM, D_M) : d(x,g) \geq j\} \subset \Pi(\MM, D_M).
\end{equation}

Finally we define
\begin{equation}\label{sigmadef}
\Sigma^j(\MM, D_M): = \{ (g \MM g^{-1}, g\cdot  D_M)\;|\; \exists (x, g) \in
  \Pi^{j}(\MM, D_M)\}\;\;.
\end{equation}
Notice that if we write 
\begin{equation}\label{taumap}
\tau: \Pi^j(\MM, D_M) \to H/N_{H}(M), \ \ \ (x,g) \mapsto g \cdot N_{H}(M),
\end{equation}
where $N_{H}(M)$ denotes the normaliser of $M=\MM(\R)^+$ in $H$, then one also has
\begin{equation*}
\Sigma^j(\MM, D_M) = \tau (\Pi^j(\MM, D_M))\;\;.
\end{equation*}

\begin{lem}
  Let $(\MM, D_M) \subset  (\HH, D_H)$ be a weak Hodge subdatum. Then
  the subsets $\Pi(\MM, D_M) \subset \F \times H$, $\Pi^{j}(\MM,
  D_M) \subset \F \times H$, and $\Sigma^j(\MM, D_M) \subset
  H/N_{H}(M)$ are definable subsets (where $\F \times H$ and
  $H/N_{H}(M)$ are endowed with their natural definable structures).
\end{lem}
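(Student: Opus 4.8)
The plan is to write each of the three sets as built from manifestly definable ``bricks'' via the operations under which definable sets are stable (finite Boolean operations, coordinate projections, images and preimages under definable maps, and sublevel sets of the local dimension function of a definable family), definability being always meant in $\R_{\an, \exp}$. The bricks are: the fundamental set $\F$ and the lifted period map $\tilde{\Phi}: \F \to D_H$, which are definable by the diagram constructed in \Cref{fundset} out of the Nilpotent Orbit Theorem (as in \cite[Section 3]{MR3958791}); the group $H=\HH(\R)^+$, which is semialgebraic (as recalled just before the statement), hence so are $\F\times H$, the action map $H\times D_H\to D_H$ and inversion $g\mapsto g^{-1}$; the weak Mumford--Tate subdomain $D_M$, which is an algebraic subvariety of $D_H$ in the sense of \Cref{zassection}, hence semialgebraic, and $g\cdot D_M$ is semialgebraic uniformly in $g\in H$; and finally $N_{H}(M)$, the normaliser in $H$ of $M=\MM(\R)^+$ (equivalently of its Zariski closure $\MM$), which is semialgebraic, so that $H/N_{H}(M)$ with its natural structure and its quotient map are semialgebraic as well.

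Granting these bricks, $\Pi(\MM, D_M)$ is immediate: the map $\F\times H\to D_H$, $(x,g)\mapsto g^{-1}\cdot\tilde{\Phi}(x)$, is a composite of definable maps, hence definable, and $\Pi(\MM,D_M)$ is the preimage under it of the semialgebraic set $D_M$.

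For $\Pi^{j}(\MM, D_M)$ the substantive point is the definability of the function $d$. I would consider the definable family over $H$ whose fibre over $g$ is $g\cdot D_M\cap\tilde{\Phi}(\F)\subset D_H$; its total space $\{(g,z)\in H\times D_H : g^{-1}z\in D_M,\ z\in\tilde{\Phi}(\F)\}$ is definable, using the previous paragraph together with the definability of the image $\tilde{\Phi}(\F)$. By the definability of local dimension in definable families (a standard consequence of o-minimal cell decomposition), the assignment $(g,z)\mapsto\dim_{z}\bigl(g\cdot D_M\cap\tilde{\Phi}(\F)\bigr)$ is definable on that total space; precomposing with the definable map $(x,g)\mapsto(g,\tilde{\Phi}(x))$ shows that $d: \Pi(\MM,D_M)\to\R$ is definable, and hence each sublevel set $\Pi^{j}(\MM,D_M)=\{(x,g)\in\Pi(\MM,D_M) : d(x,g)\geq j\}$ is definable. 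This is the only step that uses o-minimality beyond formal closure properties and the definability of $\tilde{\Phi}$, so it is where I would expect whatever real work there is (checking the uniform definability of the family and invoking the dimension theory correctly); all the rest is bookkeeping with semialgebraic sets.

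Finally, for $\Sigma^{j}(\MM, D_M)$: the map $\tau$ of (\ref{taumap}) is the composite of the coordinate projection $\F\times H\to H$, $(x,g)\mapsto g$, with the semialgebraic quotient map $H\to H/N_{H}(M)$, hence definable; therefore $\Sigma^{j}(\MM,D_M)=\tau\bigl(\Pi^{j}(\MM,D_M)\bigr)$ is the image of a definable set under a definable map, hence definable. This completes the plan; the whole argument is a direct o-minimality computation once the definability of $\tilde\Phi$ on $\F$ and the semialgebraicity of the group-theoretic data are in place.
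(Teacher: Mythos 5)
Your proof is correct and follows essentially the same route as the paper's: definability of $\Pi(\MM,D_M)$ from the definability of $\tilde\Phi$, $D_M$, and the $H$-action; definability of $\Pi^j(\MM,D_M)$ from definability of the local dimension function; and definability of $\Sigma^j(\MM,D_M)$ as the image of $\Pi^j(\MM,D_M)$ under the definable map $\tau$. You spell out the uniform-in-$g$ definability of the local dimension by first forming the definable family over $H$ and then precomposing with $(x,g)\mapsto(g,\tilde\Phi(x))$, a step the paper leaves implicit, but the argument is otherwise the same.
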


\begin{proof}
  That $\Pi(\MM, D_M) \subset \F \times H$ is a definable
  subset follows from (\ref{defiPi}), and from the facts that $\tilde{\Phi}: \F \to D_H$ is
  definable (see \Cref{fundset}), that $D_M \subset D_H$ is definable
  and that the $H$-action on $D_H$ is definable.
  
Observe that, since the dimension of a
definable set at a point is a definable function, the set
$\Pi^j(\MM, D_M)$ is a definable subset of $\F \times H$.

Finally, as the projection $H \to H/N_{H}(M)$ is semi-algebraic, the map
$\tau$ introduced in (\ref{taumap}) is definable. Hence its image
$\Sigma^j(\MM, D_{\MM}) \subset H/N_{H}(M)$ is a definable subset.   
\end{proof}

\subsection{Main Proposition and Ax-Schanuel}\label{mainpropsec}

From now on, we may and do assume that the period map $\Phi$ is
immersive. This is possible thanks to \cite[Theorem 1.1]{gaga}, which
asserts that the image of the period map is algebraic. We do this because the atypicality condition appearing in
the Ax-Schanuel theorem concerns the dimension of $S$, rather than its
period dimension (as it appears in our notion of atypical special
subvarieties \Cref{typatyp}).

\begin{defi}  \label{atypical mono}
Let $\VV$ be a polarizable $\ZZ$VHS on a smooth connected complex quasi-projective variety
$S$, with generic Hodge datum $(\G, D)$ and generic weak Hodge datum
$(\HH, D_H)$. A weak Hodge subdatum $(\MM, D_M)$ of $(\G, D)$ is said to be
\emph{monodromically atypical} for $\VV$ if there exists $Z\subset S$ a monodromically atypical
weakly special subvariety of $S$ for $\VV$, such that $(\HH_Z, D_{H_{Z}})= (\MM, D_M)$.
\end{defi}

\begin{rmk}
Let $Z \subset S$ be as in \Cref{atypical mono}. Thus
 $Z =\Phi^{-1}(\Gamma_{\MM} \backslash D_{M})^0$ and either $Z$ is
 singular for $\VV$ or
\begin{equation}\label{eqatyz}
\dim \Phi(S^\an) - \dim \Phi(Z^{\an}) < \dim D_{H} - \dim D_{M}\;\;.
\end{equation}
\end{rmk}

Recall that, there are only finitely many conjugacy classes of semisimple subgroups of $H$. In particular, there are only
finitely many weak Hodge subdata $(\HH_i, D_{H_{i}})$, $ i \in \{1, \cdots, w\}$, of
$(\G, D)$ such that every monodromically atypical weakly special subvariety of $S$ for $\mathbb{V}$ is associated to a $G$-conjugate of some $(\HH_i, D_{H_{i}})$. Let $m$ be the maximal period dimension $\dim
  \Phi(Z^{\an})$, for $Z \subset S$ ranging through the monodromically atypical weakly special
  subvarieties of $S$ for $\VV$, which are of positive period dimension
  and non-singular for $\VV$ (by assumption $m >0$).

The main ingredient in the proof of \Cref{geometricZPweakly} is the
following proposition:

\begin{prop}\label{mainpropwithas}
  Let $(\MM, D_M) \subset (\G, D)$ be a weak Hodge subdatum, monodromically atypical
for $\VV$ giving rise to a $m$-dimensional monodromically atypical weakly special
  subvariety of $S$. Then the set $\Sigma^m(\MM, D_M)$ is finite. 
\end{prop}

The proof uses (multiple times) the Ax--Schanuel Theorem recalled in \Cref{zassection}.

\begin{proof}
As $\Sigma^m(\MM, D_M)$ is a definable subset of $H/N_{H}(M)$, it is
enough to show that $\Sigma^m(\MM, D_M)$ is countable to conclude
that it is finite.

To show that
$\Sigma^m(\MM, D_M)$ is countable it is enough to show that for each
$(x,g)\in \Pi^m(\MM, D_M)$ the subset $$\pi (g\cdot  D_M)\subset \Gamma_\HH
\backslash D_H \subset \Gamma \backslash D$$ is a weakly special subvariety (as the Hodge variety
$\Gamma \backslash D$ admits only countably many families of weakly
special subvarieties).

For $(x,g)\in \Pi^m(\MM, D_M)$ consider the algebraic subset of $S\times {D_H}$
\begin{displaymath}
W_{(x,g)}:= S \times g\cdot  D_M\;\;.
\end{displaymath}
Choose $U_{(x,g)}$ an irreducible complex analytic component of
maximal dimension at $(\exp(x),\tilde{\Phi}(x))$ of
\begin{displaymath}
W_{(x,g)} \cap \left (S \times_{ \Gamma_\HH \backslash D_H} D_H \right )\subset S\times D_H.
\end{displaymath}

\noi
We claim that $U_{(x,g)}$ is an atypical intersection, that is
\begin{equation*}
\codim_{S \times D_H}U_{(x,g)} <  \codim_{S \times D_H}W_{(x,g)}
+ \codim_{S \times D_H}  \left(S \times_{ \Gamma_\HH \backslash D_{H}} D_H
\right).
\end{equation*}
Indeed this last inequality can be rewritten as
\begin{equation}\label{equ}
\dim \Phi(S^\an) - \dim  U_{(x,g)} < \dim D_H - \dim g \cdot D_M\;\;.
\end{equation}
As $(x, g) \in \Pi^m(\MM, D_M)$, one has $\dim  U_{(x,g)} =m = \dim \Phi(Z^\an)$ . On the
other hand $$\dim g \cdot D_M= \dim
D_M\;\;.$$ 
Hence the required inequality (\ref{equ}) is exactly the inequality
(\ref{eqatyz}), thus $
U_{(x,g)}$ is an atypical intersection.

The Ax--Schanuel conjecture for $\Z$VHS,
\Cref{astheorem}, thus asserts that the projection $S_{(x, g)}$ of $U_{(x,g)}$ to $S$ is
contained in some strict weakly special subvariety of $S$. Let $S'= \Phi^{-1}
(\Gamma_{\HH'} \backslash D_{H'})$ be
such a strict weakly special subvariety of $S$
containing $S_{(x,g)}$, of smallest possible period dimension:
\begin{displaymath}
S_{(x, g)} \subset S' \subsetneq S.
\end{displaymath}
In particular $\dim \Phi(S'^{\an}) \geq \dim S_{(x, g)} =m$ with equality if and
only if $S'= S_{(x,g)}$.

\medskip
Suppose that $S'$ is monodromically typical for $\VV$. Thus  
\begin{equation} \label{equation2}
\dim \Phi(S^{\an}) - \dim \Phi({S'}^\an)= \dim D_{H}- \dim D_{H'} \;\;.
\end{equation}
Consider the algebraic subvariety $$W'_{(x, g)}: = S' \times g \cdot
D_M\subset S' \times D_{H'}$$ and let $U'_{(x, g)}$ 
be an irreducible complex analytic component at $(\exp(x),\tilde{\Phi}(x))$ of 
\begin{displaymath}
  W'_{(x, g)} \cap (S'\times_{\Gamma_{\HH'} \backslash D_{H'}} D_{H'})
    \subset 
    S' \times D_{H'}\;\;
  \end{displaymath}
containing $U_{(x,g)}$.
The typicality condition (\ref{equation2}) ensures that $U'_{(x, g)}$ is
still an atypical intersection. Applying again the Ax--Schanuel
conjecture, \Cref{astheorem} produces a strict weakly special
subvariety $S'' \subsetneq S'$ containing $S_{(x, g)}$. This contradicts the minimality of $S'$.

\medskip
Thus $S'$ has to be monodromically atypical (and non-singular) for $\VV$. As
$\dim \Phi(S') \geq m$ it follows from the maximality of $m$ that $\dim
\Phi(S')=m$, thus 
$S' =S_{(x, g)}$.
This is to say that $\pi (g \cdot D_M)$ is a weakly special
subvariety of $\Gamma \backslash D$.

\medskip
This concludes the proof of \Cref{mainpropwithas}.
\end{proof}

\subsection{Proof of \Cref{geometricZPweakly}}	\label{section64}
We have all the elements to finally prove \Cref{geometricZPweakly}. For some similar arguments we refer to the
proofs of \cite[Theorem 4.1]{MR3177266} and \cite[Theorem 1.2.1. and
Section 6.1.3]{bu}.

\begin{proof}[Proof of \Cref{geometricZPweakly}]

  We need the following lemma, describing the weakly special
  subvarieties of $S$ singular for $\VV$:
  
\begin{lem} \label{below}
Let $Y\subset S$ be a weakly special subvariety of $S$
for $\VV$, singular for $\VV$, and maximal for these properties. Let $S'$ be an irreducible component of
$S^\sing_\VV$ containing $Y$. If $Y$ satisfies the equality
\begin{equation} \label{egalite}
  \dim \Phi({S}^\an) - \dim \Phi(Y^{\an}) = \dim
  \Gamma_{\HH} \backslash D_{H} - \dim \Gamma_{\HH_{Y}}\backslash
    D_{H_{Y}}\;\;,
  \end{equation}
then either $Y \subset S'$ is monodromically atypical for
$\VV_{|S'}$ or $Y=S'$.
\end{lem}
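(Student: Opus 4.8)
The plan is to descend the picture to $S'$ and then to play the maximality of $Y$ against the geometry of the singular locus. To begin with I would note that $Y$ is itself a weakly special subvariety of $S'$ for $\VV_{|S'}$: by \Cref{equi} the subvariety $Y$ is maximal among closed irreducible subvarieties of $S$ with algebraic monodromy group $\HH_Y$, hence \emph{a fortiori} maximal among such subvarieties of $S'$, and its weak Hodge datum $(\HH_Y,D_{H_Y})$ is the same whether computed in $S$ or in $S'$. Write $a=\dim\Phi(S^{\an})$, $a'=\dim\Phi(S'^{\an})$, $b=\dim\Phi(Y^{\an})$ and $D=\dim D_H$, $D'=\dim D_{H_{S'}}$, $d=\dim D_{H_Y}$, where $(\HH_{S'},D_{H_{S'}})$ is the generic weak Hodge datum of $S'$ for $\VV$; the hypothesis (\ref{egalite}) reads $a-b=D-d$. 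Recall also that the monodromic codimension is intrinsic to a subvariety, so $\Mcd(Y,\VV_{|S'})=\Mcd(Y,\VV)=d-b$, and that, $Y$ being an irreducible component of the $\Phi_{|S'}$-preimage of its weakly special closure, the usual intersection bound gives $\Mcd(Y,\VV_{|S'})\le\Mcd(S',\VV_{|S'})=D'-a'$.

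Suppose now $Y\neq S'$; I must show that $Y$ is weakly atypical for $\VV_{|S'}$, and I may assume $Y$ is non-singular for $\VV_{|S'}$ (otherwise there is nothing to prove). By the previous paragraph it then suffices to exclude the equality $\Mcd(Y,\VV_{|S'})=\Mcd(S',\VV_{|S'})$, that is $a'-b=D'-d$. Suppose it holds; subtracting it from (\ref{egalite}) yields $a-a'=D-D'$. Since $S'\subseteq S^{\sing}_{\VV}$, the image $\Phi(S'^{\an})$ lies in the singular locus of $\Phi(S^{\an})$, which has positive codimension, so $a'<a$ and therefore $D'<D$, i.e. $\HH_{S'}\subsetneq\HH$.

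Next I would introduce the weakly special closure $\widehat{S'}:=\Phi^{-1}(\Gamma_{\HH_{S'}}\backslash D_{H_{S'}})^{0}$ of $S'$ in $S$. If $S'$ were itself weakly special for $\VV$ then $Y=S'$ by the maximality of $Y$ among weakly special subvarieties singular for $\VV$; hence $S'\subsetneq\widehat{S'}$. Moreover $\widehat{S'}\subsetneq S$, since $\Gamma_{\HH_{S'}}\backslash D_{H_{S'}}$ is a proper weak Hodge subvariety of $\Gamma_{\HH}\backslash D_H$ whereas $\Phi(S^{\an})$ is contained in no proper weak Hodge subvariety. The crucial observation is that $\widehat{S'}$ cannot be singular for $\VV$: were it so, it would be a weakly special subvariety of $S$ singular for $\VV$ and strictly containing $Y$, against maximality. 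Consequently $\Phi(\widehat{S'}^{\an})\not\subseteq\operatorname{Sing}\Phi(S^{\an})$, so that $\Phi(S'^{\an})\subseteq\Phi(\widehat{S'}^{\an})\cap\operatorname{Sing}\Phi(S^{\an})$ is a proper closed analytic subvariety of the irreducible variety $\Phi(\widehat{S'}^{\an})$, which gives $\dim\Phi(\widehat{S'}^{\an})\ge a'+1$.

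The main obstacle is to turn this into a contradiction. Since $\widehat{S'}$ is an irreducible component of the $\Phi$-preimage of a weak Hodge subvariety, $\Mcd(\widehat{S'},\VV)\le\Mcd(S,\VV)$, with equality precisely when $\widehat{S'}$ is weakly typical; were that equality to hold we would obtain $\dim\Phi(\widehat{S'}^{\an})=a-(D-D')=a'$ (using $a-a'=D-D'$), contradicting $\dim\Phi(\widehat{S'}^{\an})\ge a'+1$. It therefore remains to rule out the case where $\widehat{S'}$ is weakly atypical for $\VV$; being non-singular, $\widehat{S'}$ would then exhibit a genuine excess intersection of $\Phi(S^{\an})$ with $\Gamma_{\HH_{S'}}\backslash D_{H_{S'}}$. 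I expect this case to be eliminated by the method of \Cref{mainpropwithas}: feeding into the Ax--Schanuel theorem \Cref{astheorem} a suitable moving family of algebraic subvarieties of $S\times D_H$ adapted to $\widehat{S'}$ and to the weak Hodge subdatum $(\HH_Y,D_{H_Y})$, and using the equality $a'-b=D'-d$ to supply the excess needed for its hypothesis, so as to produce a weakly special subvariety of $S$ lying strictly between $Y$ and $\widehat{S'}$ that is still singular for $\VV$ — once again contradicting the maximality of $Y$. Pinning down the precise moving family so that the conclusion of \Cref{astheorem} is not vacuous — the difficulty being that $Y$ and $\widehat{S'}$ are already weakly special — is, I expect, the technical heart of the proof.
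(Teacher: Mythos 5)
Your first three paragraphs faithfully reproduce the paper's own initial computation: you derive $a-a'=D-D'$ from (\ref{egalite}) and the typicality assumption, then $D'<D$ from $a'<a$, and you then introduce the weakly special closure $\widehat{S'}$. The observation that $\widehat{S'}$ cannot be singular for $\VV$ under the assumption $Y\neq S'$ (by maximality of $Y$), and that consequently $\dim\Phi(\widehat{S'}^{\an})\geq a'+1$ so $\widehat{S'}$ must be weakly \emph{a}typical and non-singular, is correct. The problem is that you stop there: as you yourself flag, you have reduced to a configuration you cannot dispose of, and the Ax--Schanuel device you gesture at is not set up, so the proof as written does not close.

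The paper's proof of \Cref{below} is considerably shorter and does not go through $\widehat{S'}$ explicitly, and in particular it never invokes Ax--Schanuel. After deriving $D'<D$, it immediately concludes: $S'$ is contained in a strict weakly special subvariety of $S$; since $Y$ is maximal among the weakly special subvarieties of $S$ singular for $\VV$, and $Y$ and $S'$ are irreducible, $S'=Y$. Unpacked, this is precisely your ``singular case'' for $\widehat{S'}$: a strict weakly special subvariety containing $S'\supseteq Y$ which is singular for $\VV$ must, by maximality, coincide with $Y$, whence $S'=Y$ by irreducibility. In other words, the paper's one-line ending is essentially your Case~1, and the paper does not explicitly discuss your Case~2 (where $\widehat{S'}$ is non-singular and weakly atypical). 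Your detailed bookkeeping is therefore not wasted — it isolates exactly the point where the published argument is terse — but you have not resolved it, and resolving it via Ax--Schanuel as you propose would be a genuinely different and heavier route than what the paper records (the paper reserves the Ax--Schanuel machinery for \Cref{mainpropwithas}, not for this lemma).

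To summarize the gap concretely: you must rule out the existence of a strict weakly special subvariety $\widehat{S'}$ of $S$, non-singular for $\VV$, weakly atypical, satisfying $Y\subsetneq S'\subsetneq\widehat{S'}\subsetneq S$ with $S'$ an irreducible component of $S^{\sing}_{\VV}$ and $\HH_{\widehat{S'}}=\HH_{S'}$. Your proof does not do this, and the sketch of how Ax--Schanuel would do it (choosing the ``moving family'' $W\subset S\times D_H$ so that its output is a weakly special strictly between $Y$ and $\widehat{S'}$ that is still singular) is not carried out and is, as you say, the technical heart. Until that step is supplied, the argument is incomplete.
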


\begin{proof}
 Suppose that $Y \subset S'$ is monodromically typical for
 $\VV_{|S'}$. In particular:
 \begin{equation*} 
  \dim \Phi({S'}^\an) - \dim \Phi(Y^{\an}) = \dim
  \Gamma_{\HH_{S'}}\backslash D_{H_{S'}} - \dim \Gamma_{\HH_{Y}}\backslash
    D_{H_{Y}}\;\;.
  \end{equation*}
  Subtracting from (\ref{egalite}) and as $\dim \Phi({S'}^\an)  <
  \dim\Phi(S^\an)$ by definition of the singular locus, it follows
  that $S'$ is contained in a strict weakly special subvariety of
  $S$. As $S'$ contains $Y$ which is maximal among the weakly special
  subvarieties of $S$, we deduce by irreducibility of $Y$ and $S'$
  that $S'=Y$.
\end{proof}

Let $Z$ be as in \Cref{geometricZPweakly}. If $Z$ contains a
Zariski-dense set of weakly special subvarieties of $S$ singular for $\VV$, each satisfying the equality~(\ref{egalite}), 
and maximal for these properties, it then follows from
\Cref{below} that:

- Either $Z$ coincides with an irreducible component
$S'$ of $S^\sing_\VV$ which is a weakly special subvariety of $S$ for
$\VV$. But then $Z$ satisfies $(a)$ of \Cref{geometricZPweakly};

- Or $Z$ is an irreducible component of $\overline{\HL(S',
  \VV_{|S'}^\otimes)_{\ws, \watyp}}^\Zar$ for such an irreducible component
$S'$.

\medskip
Without loss of generality (replacing $S$ by $S'$ if necessary, and
iterating) we are reduced to the case
where $Z$ contains a Zariski-dense set of monodromically atypical
weakly special subvarieties of $S$ for $\VV$, non-singular for $\VV$
and of positive period dimension.

\medskip
Recall that $m$ is the maximum of the period dimensions of the weakly
atypical weakly special subvarieties of $S$. For any $j\leq m$
consider the set  
\begin{multline}
  \cE_j := \{ \textnormal{maximal monodromically atypical weakly special subvarieties
    of}\; S \; \\
  \textnormal{ non-singular for $\VV$ and of period dimension}\; j\}\;\;.
  \end{multline}
and, following the notation introduced in \Cref{mainpropsec}, write
\begin{equation} \label{Ej}
\mathcal{E}_j=\coprod _{1\leq i \leq w} \;\coprod_{(\MM, D_M) \in
  \Sigma^j(\HH_i, D_{H_{i}})} \mathcal{E}_j(\MM, D_{M})\;\;,
\end{equation}
where  $\mathcal{E}_j(\MM, D_{M})$ denotes the set of
atypical weakly special subvarieties of $S$ of period dimension $j$
with generic weak Hodge datum $(\MM, D_M)$.

\medskip
Let $Z$ be as in \Cref{geometricZPweakly}. There exists a larger $j \in
\{1, \cdots, m\}$ and an $i \in \{1,
\cdots, w\}$ such that the union of the special subvarieties of
$\cE_j(\MM, D_M)$ contained in $Z$, for $(\MM, D_M)$ ranging through
$\Sigma^j(\HH_i, D_{H_{i}})$, is Zariski-dense in $Z$.

\medskip
Suppose first that $j=m$. As $\Sigma^m(\HH_i, D_{H_{i}})$ is a
finite set by \Cref{mainpropwithas}, there exists $(\MM, D_M) \in \Sigma^m(\HH_i,
D_{H_{i}})$ such that the union of the special subvarieties of
$\cE_j(\MM, D_M)$ contained in $Z$ is Zariski-dense in $Z$.
Let $\LL:=\mathbf{Z}_{\G^\ad}(\MM^\ad)$.

\medskip
Either $L$ is compact, in
which case, similarly to \cite[Proof of Theorem 4.1]{MR3177266}, we observe that
$\mathcal{E}_m(\MM,D_{M})$ is a finite set as 
$\Sigma^m(\MM, D_M)$ is finite, and that each element in
$\mathcal{E}_m(\MM,D_{M})$ is a special subvariety. It follows that
$Z$ satisfies (a) in \Cref{geometricZPweakly}.

\medskip
Or $L$ is not compact. In that case the 
pair $$(\MM', D_{M'}):=(\MM^\ad \cdot \LL, Z_{G}(M) \cdot D_M)$$ is a Hodge subdatum of $(\G^\ad,
D)$, satisfying the inclusions of weak Hodge data
\begin{displaymath}
(\MM,D_M)\subsetneq (\MM', D_{M'}) \subset (\mathbf{G^\ad},D)\;\;.
\end{displaymath}
Let $Z'\subset S$ be an irreducible component of
$\Phi^{-1} (\Gamma_{\MM'} \backslash D_{M'})$ containing $Z$. It is
a special subvariety of $S$ containing $Z$. By maximality of $Z$ among
the monodromically atypical weakly special subvarieties, we observe that $Z'$ is weakly
typical and of product type. Thus $Z$ satisfies (b) in \Cref{geometricZPweakly}.

This finishes the proof of \Cref{geometricZPweakly} in the case where $j=m$.

\medskip
We now argue by induction (downward) on $j$. Consider
the set 
\begin{displaymath}
\Sigma_2^j(\HH_i,D_{H_{i}}):= \{(g\HH_ig^{-1}, g \cdot D_{H_{i}})\;| \; \exists (x,g)\in
\Sigma^j (\HH_i,D_{H_{i}}), \;\; \forall j'>j \;\; S_{x,g}\notin
\mathcal{E}_{j'} \},
\end{displaymath}
where $\Sigma ^j(\HH_i,D_{H_{i}})$ is defined as in (\ref{sigmadef}). The
inductive assumption implies that the second condition is
definable. The proof of \Cref{mainpropwithas} shows that
$\Sigma_2^j(\HH_i,D_{H_{i}})$ is finite. Arguing as in the case $j=m$
we conclude again that $Z$ satisfies either (a) or (b) of
\Cref{geometricZPweakly}.

This concludes the proof of the Geometric Zilber--Pink
conjecture for monodromically atypical weakly special subvarieties. 
\end{proof}

\subsection{Proof of \Cref{geometricZP}} \label{proofGeometricZP}
To prove \Cref{geometricZP} one repeats the proof of 
\Cref{geometricZPweakly}, replacing the inputs ``weakly special''
and ``monodromically atypical'' by ``special'' and ``atypical'' respectively; and, in the proof,
``weak Hodge subdatum'' by ``Hodge subdatum'' and all the inequalities
concerning the monodromic codimension by one with the Hodge
codimension. As atypical subvarieties are monodromically atypical by 
\Cref{HatypisMatyp}, we can still apply the Ax-Schanuel conjecture in
the proof of \Cref{mainpropwithas} to produce a weakly special
subvariety $S'$, which is not only monodromically atypical but atypical. The
argument then gives rise to the alternative (a) or (b) in
\Cref{geometricZP} in the same way it gave rise to the alternative (a)
or (b) for \Cref{geometricZPweakly} (for the branch (a) we use
\Cref{converse} to deduce that $Z$, which is an atypical weakly
special subvariety containing a special subvariety, is actually an
atypical special subvariety).

\section{A criterion for the typical Hodge locus to be empty} \label{criterionsection}

In this section we prove \Cref{level criterion} and \Cref{level22}. Let $\VV$ be a polarizable $\ZZ$VHS
on a smooth connected complex quasi-projective variety 
$S$, with generic Hodge datum $(\G, D)$ and algebraic monodromy
group $\HH$. \Cref{level criterion} and \Cref{level22} are variations
of the following results (see \Cref{addendum Chris} for this deduction):

\begin{theor} \label{level criterion 1}
Let $\VV$ be a polarizable $\ZZ$VHS
on a smooth connected quasi-projective variety 
$S$, with generic Hodge datum $(\G, D)$ and algebraic monodromy group $\HH$. 
If the level of $\VV$ is at least $3$,
then $\VV$ does not admit any strict monodromically typical weakly special subvariety.
\end{theor}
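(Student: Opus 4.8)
The plan is to reduce the statement to a purely Lie-theoretic fact about Hodge-Lie algebras of high level, combined with Griffiths transversality. Suppose, for contradiction, that $Y \subset S$ is a strict monodromy typical weakly special subvariety, with weak Hodge subdatum $(\HH_Y, D_{H_Y})$, so that $\HH_Y \subsetneq \HH$ is a strict normal subgroup of... no: $\HH_Y$ need not be normal in $\HH$, but it sits inside the decomposition $\fh^\ad = \fh_Y^\ad \times \fl_Y$ where $\fl_Y = \mathbf{Z}_{\fh^\ad}(\fh_Y^\ad)$. Monodromy typicality means $\Mcd(Y,\VV) = \Mcd(S, \VV)$, i.e.
\begin{equation*}
\dim D_{H} - \dim \Phi(S^\an) = \dim D_{H_Y} - \dim \Phi(Y^\an)\;,
\end{equation*}
equivalently $\dim D_H - \dim D_{H_Y} = \dim \Phi(S^\an) - \dim \Phi(Y^\an)$. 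Since $\Phi(Y^\an) \subset \Phi(S^\an)$, the right-hand side is at most $\dim \Phi(S^\an)$; but by Griffiths transversality $\dim \Phi(S^\an) \leq \dim \fh^{-1}$ (the rank of the horizontal tangent bundle of $D_H$). The core of the argument will be to show that when the level of $\fh$ is at least $3$, the "vertical" contribution $\dim D_H - \dim \fh^{-1} = \sum_{k \geq 2} \dim \fh^{-k}$ is so large relative to $\dim \fh^{-1}$ that the typicality equality cannot hold for any proper weak subdatum of positive period dimension — and the zero-period-dimension case is excluded since weakly special subvarieties in $\HL(S,\VV^\otimes)_\ws$ are of positive period dimension, but here we must also handle points (where $Y$ is a single point, $\Mcd(Y,\VV) = \dim D_{H_Y} = 0$ would force $\dim D_H = \dim \Phi(S^\an) \leq \dim \fh^{-1}$, impossible in level $\geq 3$ unless $\fh$ is trivial).

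More precisely, here are the steps I would carry out. First, reduce to the case $\HH = \G^\der$ using \Cref{reduction}, so that $\Gamma_\HH \backslash D_H = \Gamma \backslash D$ and we work with $\fh = \fg^\der = \fg^\ad$. Second, unwind the typicality equality into
\begin{equation*}
\sum_{k \geq 1} \dim \fh^{-k} - \sum_{k \geq 1} \dim \fh_Y^{-k} = \dim \Phi(S^\an) - \dim \Phi(Y^\an)\;,
\end{equation*}
using that $\dim D_{H} = \dim \fh^{-1} + \sum_{k\geq 2}\dim\fh^{-k}$ and similarly for $D_{H_Y}$ (here I identify $\dim D_{H} = \sum_{k\geq 1}\dim \fh^{-k}$ via the Hodge decomposition of $\fh_\CC$). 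Third, invoke Griffiths transversality twice: $\dim \Phi(S^\an) \leq \dim \fh^{-1}$ and $\dim \Phi(Y^\an) \geq 1$ (positive period dimension — or $\dim\Phi(Y^\an)\geq 0$ if $Y$ is a point), so the right side is $\leq \dim \fh^{-1} - \dim\fh_Y^{-1}$ after noting $\dim\Phi(Y^\an)\leq \dim\fh_Y^{-1}$... actually I need $\dim\Phi(Y^\an)\le \dim\fh_Y^{-1}$, which again is Griffiths transversality on $D_{H_Y}$. Combining: $\sum_{k\geq 1}(\dim\fh^{-k} - \dim\fh_Y^{-k}) \leq \dim\fh^{-1} - \dim\fh_Y^{-1}$, hence $\sum_{k\geq 2}\dim\fh^{-k} \leq \sum_{k \geq 2}\dim \fh_Y^{-k}$. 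Fourth — and this is the heart of the matter — since $\fh_Y \subset \fh$ is a sub-Hodge-Lie algebra and the decomposition $\fh^\ad = \fh_Y^\ad \times \fl_Y$ is one of Hodge-Lie algebras, we have $\fh_Y^{-k} \subset \fh^{-k}$ for all $k$, so the inequality forces $\fh^{-k} = \fh_Y^{-k}$ for all $k \geq 2$. In particular $\fh^{-k} \subset \fh_Y^\ad \otimes\CC$ for all $k\geq 2$, so the complementary factor $\fl_Y$ has $\fl_{Y,\CC}^{-k} = 0$ for $k\geq 2$, i.e. $\fl_Y$ has level $\leq 1$.

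The main obstacle — and the step requiring the level-$\geq 3$ hypothesis essentially — is to derive a contradiction from "$\fh^{-k} = \fh_Y^{-k}$ for all $k\geq 2$ and $\fh_Y \subsetneq \fh$". The key point should be: the top graded piece $\fh^{-\ell}$ (where $\ell \geq 3$ is the level) is nonzero, contained in $\fh_Y^\ad \otimes \CC$, and by the Jacobi identity $[\fh^{-1}, \fh^{-\ell}] = 0$ while $[\fh^{-1},\fh^{-(\ell-1)}]$ generates enough of $\fh^{-\ell}$; iterating brackets, the Hodge-Lie subalgebra generated by $\bigoplus_{k\geq 2}\fh^{-k}$ together with its conjugate $\bigoplus_{k\geq 2}\fh^{k}$ is an ideal, and when the level is $\geq 3$ this ideal must be all of $\fh^\der$ — because each simple factor of $\fh$ carrying a Hodge structure of level $\geq 3$ is "generated in vertical degrees" once level $\geq 3$. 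Concretely: in a simple Hodge-Lie algebra $\fs$ of level $\ell\geq 2$, the subspace $\fs^{-1}$ alone does not generate $\fs$, but $\fs^{-2}$ together with brackets does recover $\fs^{-1}$ via $[\fs^{-2},\fs^{1}]\subset\fs^{-1}$ — so $\bigoplus_{k\ge 2}\fs^{-k}$ and its conjugate generate $\fs$ as a Lie algebra when $\ell\geq 2$. Hence $\fh^{-k}=\fh_Y^{-k}$ for all $k\geq 2$ already forces $\fh = \fh_Y$ on every simple factor of level $\geq 2$; the remaining level-$\leq 1$ factors of $\fh$ land in $\fl_Y$, but since the level of $\fh$ is $\geq 3$ at least one simple factor has level $\geq 3 \geq 2$, and running the typicality count factor-by-factor shows $\fh_Y$ must agree with $\fh$ on that factor and, by a rank/dimension bookkeeping on the transversality inequalities being equalities, on all factors — contradiction with $\fh_Y \subsetneq \fh$. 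I would isolate the simple-Hodge-Lie-algebra generation statement as a separate lemma (level $\geq 2$ $\Rightarrow$ $\bigoplus_{k\geq 2}\fs^{\pm k}$ Lie-generates $\fs$), proved by a short weight-argument, since that is precisely where the hypothesis level $\geq 3$ (a fortiori $\geq 2$, but the strict statement at $\geq 3$ versus $=2$ is what makes typicality impossible rather than merely constrained, cf. \Cref{level22}) does its work.
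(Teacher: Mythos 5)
Your overall architecture (reduce to a Lie-theoretic statement about Hodge-Lie algebras, extract $\fh_Y^{-k}=\fh^{-k}$ for $k\geq 2$ from typicality, then derive a contradiction from high level) matches the paper's, but two of your steps have genuine gaps.

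\textbf{Step 3 is wrong as written.} You want to deduce
$\dim\Phi(S^\an)-\dim\Phi(Y^\an)\leq\dim\fh^{-1}-\dim\fh_Y^{-1}$
from the two Griffiths bounds $\dim\Phi(S^\an)\leq\dim\fh^{-1}$ and $\dim\Phi(Y^\an)\leq\dim\fh_Y^{-1}$. But these two inequalities point in incompatible directions: the second gives $-\dim\Phi(Y^\an)\geq-\dim\fh_Y^{-1}$, so together they bound $\dim\Phi(S^\an)-\dim\Phi(Y^\an)$ neither from above nor below by $\dim\fh^{-1}-\dim\fh_Y^{-1}$. (Take $A\leq C$, $B\leq D$: $A-B\leq C-D$ is simply not implied.) What you actually need is the \emph{horizontal} analogue of the typicality bound, namely $\hMcd(Y,\VV)\leq\hMcd(S,\VV)$, which comes from linear algebra in the horizontal tangent space at a smooth Hodge-generic point of $\Phi(Y^\an)$: since $T_P\Phi(S^\an)\subset\fh^{-1}$ (Griffiths) and $T_P\Phi(Y^\an)\supset T_P\Phi(S^\an)\cap\fh_Y^{-1}$, one gets $\dim\Phi(Y^\an)\geq\dim\Phi(S^\an)+\dim\fh_Y^{-1}-\dim\fh^{-1}$. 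The paper packages this as ``monodromy typical implies horizontally monodromy typical'' via \Cref{rmk1zp}(c), then subtracts the horizontal equality from~(\ref{e1}) to get $\fh_Y^{-k}=\fh^{-k}$ for $k\geq 2$. Griffiths transversality alone is genuinely insufficient.

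\textbf{Step 4 contains a false lemma.} You assert: in a simple Hodge-Lie algebra $\fs$ of level $\ell\geq 2$, the subspaces $\bigoplus_{k\geq 2}\fs^{\pm k}$ Lie-generate $\fs$. This is false at $\ell=2$: if $\fs$ has level exactly $2$, then any bracket of elements of degree $\pm 2$ has degree in $\{0,\pm 2,\pm 4\}$, so the Lie subalgebra generated by $\fs^{\pm 2}$ is supported in even degrees only and can never contain $\fs^{\pm 1}\neq 0$. This is precisely why the typical locus \emph{can} be nonempty in level $2$ (cf.~\Cref{firsrmkweight2}: $\fso(2p,a)\subset\fso(2p,b)$ with $a<b$ gives $\fh^{-2}=\fg^{-2}$ but $\fh\subsetneq\fg$). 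So the hypothesis $\ell\geq 3$ is not a ``fortiori'' improvement of an $\ell\geq 2$ statement --- it is the exact threshold. The paper's \Cref{nosubalgebra} is the correct version and its proof is not a short weight argument: it uses Kostant's description of the $\ft$-root system of the Levi $\fg^0$, the fact that $\fg^k=[\fg^{k-1},\fg^1]$ forces the index set $J$ of simple $\ft$-roots landing in the generated subalgebra to be nonempty, and then the positivity of all coefficients $n_{\beta_i}$ in the highest $\ft$-root to force $J=I$.

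You are also silently assuming that $\fh$ itself is generated in level $1$; this is the paper's \Cref{generationlevel1} and is not automatic --- it requires both the bracket-generating property of the horizontal distribution (Robles) \emph{and} the fact that an algebraic monodromy group has no nontrivial $\RR$-anisotropic $\QQ$-factor, which is what kills the ``purely type $(0,0)$'' complement. Finally, the initial reduction to $\HH=\G^\der$ via \Cref{reduction} is unnecessary here (and would require checking that the hypothesis ``level of $\fh\geq 3$'' is preserved): the statement is already phrased purely in terms of $\fh$, and the paper works with $\fh$ directly.
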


\begin{prop} \label{level criterion 12}
Let $\VV$ be a polarizable $\ZZ$VHS
on a smooth connected quasi-projective variety 
$S$, with generic Hodge datum $(\G, D)$ and algebraic monodromy group $\HH$. 

If the level of $\VV$ is $2$ and  $\fh$ is $\Q$-simple, 
then any strict monodromically typical weakly special subvariety is 
associated to a sub-datum $(\G',D_{\G'})$, such that $\G'^{\ad}$ is simple.
\end{prop}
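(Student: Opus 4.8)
The plan is to fix a strict monodromy typical weakly special subvariety $Y\subset S$ of positive period dimension and show that its generic adjoint Mumford--Tate group $\G_Y^{\ad}$ is $\Q$-simple; then $(\G_Y,D_{G_Y})$ is the sub-datum of the statement, and \Cref{level22} follows as explained before the statements.

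\emph{Step 1: reductions and the numerical consequence of typicality.} By \Cref{reduction} we may assume $\HH=\G^{\der}$, so that $\fg^{\ad}\cong\fh$ as $\Q$-Hodge-Lie algebras; thus $\fh$ is $\Q$-simple of level $2$, i.e. $\fh_\C=\fh^{-2}\oplus\fh^{-1}\oplus\fh^0\oplus\fh^1\oplus\fh^2$ with $\fh^{-2}\ne 0$. Let $(\HH_Y,D_{H_Y})$ be the weak Hodge datum of $Y$ and $\fh_Y=\Lie\HH_Y\subset\fh$, a Hodge-Lie subalgebra with induced grading $\fh_Y^{-k}=\fh_Y\cap\fh^{-k}$; positivity of the period dimension gives $\fh_Y^{-1}\ne 0$. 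Monodromy typicality means $\Mcd(Y,\VV)=\Mcd(S,\VV)$, which by \Cref{rmk1zp}(c) forces $\hMcd(Y,\VV)\ge\hMcd(S,\VV)$; on the other hand Griffiths transversality gives the (elementary) reverse inequality $\hMcd(Y,\VV)\le\hMcd(S,\VV)$, since along $D_{H_Y}$ the only transverse directions of $D_H$ that a horizontal period image can reach are those in $\fh^{-1}/\fh_Y^{-1}$. Feeding $\hMcd(Y,\VV)=\hMcd(S,\VV)$ and $\Mcd(Y,\VV)=\Mcd(S,\VV)$ into $\Mcd(Y,\VV)=\hMcd(Y,\VV)+\dim\fh_Y^{-2}$ and $\Mcd(S,\VV)=\hMcd(S,\VV)+\dim\fh^{-2}$ (level $2$, as in the proof of \Cref{rmk1zp}(c)) yields
\[
\fh_Y^{-2}=\fh^{-2},\qquad\text{and hence}\qquad\fh_Y^{2}=\fh^{2}.
\]

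\emph{Step 2: $\HH_Y^{\ad}$ is $\Q$-simple.} Recall that $\HH_Y$ has no $\R$-anisotropic factor: a sub-VHS on which the monodromy preserves a definite form (equivalently, one carrying the trivial Hodge-Lie structure) has finite monodromy. Hence every $\Q$-simple factor of $\fh_Y$ carries a nontrivial Hodge structure, so by \Cref{complex} has level $1$ or $2$. The structural heart is the purely Lie-theoretic assertion: \emph{if $\fm\subsetneq\fh$ is a proper Hodge-Lie subalgebra of a $\Q$-simple level $2$ Hodge-Lie algebra $\fh$, with $\fm^{-2}=\fh^{-2}$ and no $\R$-anisotropic factor, then $\fm$ is $\Q$-simple.} I would prove this by running through the classification of simple $\Q$-Hodge-Lie algebras of level $\le 2$ (up to Tate twist these are governed by weight-$2$ effective Hodge structures with $h^{2,0}\ge 2$, i.e. orthogonal data, together with finitely many exceptional cases): writing $\fm=\fm_1\times\cdots\times\fm_r$, one has $\fh^{-2}=\bigoplus_i\fm_i^{-2}$ with $\fm_i^{-2}$ the top graded piece of $\fm_i$; using that $\fh^{-2}$ is an irreducible $\fh^0$-module and $[\fh^{-1},\fh^{-1}]=\fh^{-2}$, a nontrivial such decomposition is impossible --- e.g. in the orthogonal case any nontrivial splitting of $V^{2,0}$ among two factors drops both to level $\le 1$, whence $\fm^{-2}\subsetneq\fh^{-2}$, a contradiction, and the remaining cases are checked directly. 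Applying this to $\fm=\fh_Y$ gives that $\HH_Y^{\ad}$ is $\Q$-simple.

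\emph{Step 3: no complementary factor.} Write $\G_Y^{\ad}=\HH_Y^{\ad}\times\LL_Y$. If $\LL_Y$ were nontrivial, $\fl_Y=\Lie\LL_Y$ would be a nonzero Hodge-Lie subalgebra of $\fh$ commuting with $\fh_Y$, with $\fl_Y\cap\fh_Y=0$, and carrying a nontrivial Hodge structure (it is a factor of the generic Mumford--Tate group). From $\fl_Y^{-2}\subseteq\fh^{-2}=\fh_Y^{-2}\subseteq\fh_Y$ and $\fl_Y\cap\fh_Y=0$ we get $\fl_Y^{-2}=0$, so $\LL_Y$ has level $1$ and $\fl_Y^{-1}\ne 0$; but $\fl_Y^{-1}$ commutes with $\fh^{2}=\fh_Y^{2}$, hence lies in $\{x\in\fh^{-1}\colon[x,\fh^2]=0\}$, which vanishes for a $\Q$-simple level $2$ $\fh$ with $\fh^{-1}\ne 0$ (same analysis as in Step 2) --- a contradiction. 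Therefore $\LL_Y$ is trivial and $\G_Y^{\ad}=\HH_Y^{\ad}$ is $\Q$-simple, proving \Cref{level criterion 12}. The main obstacle is Step 2: converting the single numerical input $\fh_Y^{-2}=\fh^{-2}$ produced by typicality into $\Q$-simplicity of $\fh_Y$ requires the explicit list of simple level-$\le 2$ Hodge-Lie algebras and a finite but delicate analysis of their reductive Hodge-Lie subalgebras, with the passage between $\Q$-, $\R$- and $\C$-simplicity handled by \Cref{complex}; establishing the horizontal bound $\hMcd(Y,\VV)\le\hMcd(S,\VV)$ in Step 1 is routine but should be spelled out.
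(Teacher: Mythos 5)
Your Step 1 matches the paper's derivation of $\fh_Y^{-2}=\fh^{-2}$ from monodromy typicality (this is exactly the computation in the proof of \Cref{level criterion 1}, specialized to level $2$). Your Steps 2--3, however, go a genuinely different and considerably heavier route than the paper, and Step 2 in particular is left as an outline rather than a proof.

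The paper reduces \Cref{level criterion 12} to a single structural lemma, \Cref{nosubalgebraprod}: if $\fg$ is a simple $\QQ$-Hodge-Lie algebra generated in level $1$, of level $2$, and $\fg'\subset\fg$ is a Hodge-Lie subalgebra with $\fg'^{-2}=\fg^{-2}$, then $\fg'$ is simple. The proof has no case analysis. After reducing to $\fg$ $\R$-simple, one observes that $\fg^{-2}$ is an \emph{irreducible} $\fg^0$-module. If $\fg'=\fg_1'\oplus\fg_2'$ is a nontrivial decomposition, then $\fg^{-2}=\fg_1'^{-2}\oplus\fg_2'^{-2}$; if both summands are nonzero this is a nontrivial $\fg^0$-module decomposition of the irreducible $\fg^{-2}$, a contradiction. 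If instead $\fg_2'^{-2}=0$, then $\fg_2'$ centralizes $\fg^{-2}\oplus[\fg^{-2},\fg^2]\oplus\fg^{2}$, and a short computation using that $[E,\fg^2]\neq 0$ for every nontrivial irreducible $\fg^0$-submodule $E\subset\fg^{-1}$ (a consequence of generation in level $1$, i.e. \Cref{generationlevel1}) forces $\fg_2'^{-1}=0$, hence $\fg_2'=\fg_2'^0$ is compact; since $\HH_Y$ is a monodromy group, $\fg'=\fh_Y$ has no $\R$-anisotropic $\QQ$-factor, so $\fg_2'=0$. Your Step 3 is essentially this last centralizer argument, but applied to $\LL_Y$ instead of to a putative compact factor of $\fh_Y$ itself; the paper folds both possibilities (compact second factor, complementary $\LL$) into the one lemma.

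The gap in your proposal is Step 2. You defer the ``structural heart'' to ``running through the classification of simple $\QQ$-Hodge-Lie algebras of level $\le 2$'' with the orthogonal case as an example and ``the remaining cases checked directly''. That is a substantially larger undertaking than what the paper does, and your sketch only addresses the case in which every factor of $\fm$ contributes to $\fm^{-2}$; it does not treat the possibility that one factor $\fm_i$ has $\fm_i^{-2}=0$ but $\fm_i^{\pm 1}\ne 0$ (such a factor is not $\R$-anisotropic, so your stated hypothesis ``no $\R$-anisotropic factor'' does not automatically discard it). That is precisely the subtler half of the paper's argument, and it is handled there by the centralizer computation, not by classification. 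Your formulation of the lemma does have the merit of making the ``no compact $\Q$-factor'' hypothesis explicit, which the paper uses but leaves implicit in the statement of \Cref{nosubalgebraprod} (it is supplied in the application because $\fh_Y$ is a monodromy Lie algebra). But as written, Step 2 is not a proof, and without the irreducibility of $\fg^{-2}$ as $\fg^0$-module and the generation-in-level-$1$ input, the classification route would be both long and error-prone. I would strongly recommend replacing Step 2 by the paper's structural argument.
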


\subsection{Proof of \Cref{level criterion 1}}\label{firstsectionontyp}

\begin{proof}[\unskip\nopunct]
The proof uses the fact that the $\QQ$-Hodge-Lie algebra $\fh$ of the
algebraic monodromy group $\HH$ is generated in level $1$, see below; and the
analysis by Kostant \cite{Kostant} of root systems of Levi factors for complex
semi-simple Lie algebras.

\begin{defi}
  Let $K=\QQ$ or $\RR$.
  A $K$-Hodge-Lie algebra $\fg$ is said to be \emph{generated in level} $1$
  if the smallest $K$-Hodge-Lie subalgebra of $\fg$ whose
  complexification contains $\fg^{-1}$ and $\fg^1$ coincides with
  $\fg$.
\end{defi}
 \begin{prop} \label{generationlevel1}
Let $\VV$ be a polarizable $\ZZ$VHS
on a smooth connected complex quasi-projective variety 
$S$, with algebraic monodromy group $\HH$. Then the $\QQ$-Hodge-Lie
subalgebra structure $\fh_s$ defined by any point $s \in S$ on $\fh$
is generated in level~$1$.

More precisely, the $\RR$-Hodge-Lie
subalgebra structure defined by any point $s \in S$ on the 
non-compact part $\fh_{\RR}^{\nc}$ of $\fh_\RR:= \fh \otimes_\QQ \RR$ is
generated in degree~$1$, and the compact part $\fh_\RR^{\textnormal{c}}$ is of 
Hodge type~$(0,0)$.
\end{prop}
  
\begin{prop} \label{nosubalgebra}
  Suppose $\fg_\RR$ is a simple $\RR$-Hodge-Lie algebra generated in level
  $1$, and of level at least $3$. If $\fg'_\RR \subset \fg_\RR$ is an $\RR$-Hodge-Lie subalgebra
  satisfying ${\fg'}^i = \fg^i$ for all $|i|\geq 2$ then $\fg'=
  \fg$.

\end{prop}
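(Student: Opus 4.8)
The plan is to leverage the two structural inputs — that $\fg$ is generated in level $1$, and that $\fg' \subset \fg$ is a Hodge-Lie subalgebra agreeing with $\fg$ in all degrees of absolute value at least $2$ — and to show that any such $\fg'$ is forced to recover $\fg^{-1}$ and $\fg^1$ as well, whence $\fg' = \fg$ by the generation hypothesis. The only thing missing in $\fg'$ a priori is (part of) the degree $\pm1$ and degree $0$ pieces, so the whole game is to produce $\fg^{-1}$ and $\fg^1$ inside $\fg'$ from the degree $\geq 2$ pieces (which $\fg'$ does contain) using the Lie bracket, which respects the grading and preserves $\fg'$.

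First I would set up the bracket relations coming from the grading: since $[\fg^i, \fg^j] \subseteq \fg^{i+j}$, and $\fg$ has level at least $3$, we have $\fg^3 \neq 0$ (or $\fg^{\ell}\neq 0$ for the top level $\ell\geq 3$). I would use the fact that a Hodge-Lie algebra is \emph{reductive} with the Killing form polarizing $\fg^{\ad}$; concretely this means the bracket pairing $[\cdot,\cdot]\colon \fg^{i}\otimes\fg^{-i}\to\fg^0$ composed with a suitable invariant form is non-degenerate, so that $\fg^{i}$ and $\fg^{-i}$ are ``dual''. The crucial step is then: for $|i|\geq 2$ we already have $\fg^{i}\subseteq\fg'$, so $[\fg^{i},\fg^{-i}]\subseteq\fg'$ gives a nonzero (in fact full, by non-degeneracy of the Killing pairing on the semisimple part) piece of $\fg^0$; and more to the point, $[\fg^{2},\fg^{-3}]\subseteq\fg^{-1}\cap\fg'$ and $[\fg^{3},\fg^{-2}]\subseteq\fg^{1}\cap\fg'$. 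So the real content is to show these brackets are \emph{all} of $\fg^{-1}$, respectively $\fg^{1}$. This is where the Kostant-type analysis of the grading (the decomposition of $\fg_\CC$ under the cocharacter defining the Hodge grading, i.e.\ a parabolic/Levi picture) enters: one shows that $\fg^{-1}_\CC = [\fg^{-2}_\CC, \fg^1_\CC] + [\fg^{2}_\CC,\fg^{-3}_\CC]+\cdots$, more precisely that $\fg^{-1}_\CC$ is generated as an $\fg^0_\CC$-module, using ad-action, by brackets of the higher-degree pieces — equivalently, that no simple summand of $\fg_\CC$ is ``concentrated'' so that its degree $-1$ part cannot be reached from degrees of absolute value $\geq 2$. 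Since $\fg$ is generated in level $1$ and of level $\geq 3$, every simple factor of $\fg_\CC$ that appears carries a nontrivial grading of depth $\geq 3$, and for such a graded simple Lie algebra the degree $1$ piece lies in $[\fg^{-2},\fg^{3}]$ (or symmetric variants) — this is a finite check over the Kostant classification of $\ZZ$-gradings of complex simple Lie algebras by marked Dynkin diagrams.

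Concretely the steps are: (1) reduce to $\fg$ semisimple, then to $\fg$ simple over $\QQ$, then pass to $\fg_\CC$ and to a simple summand, tracking that $\fg'_\CC$ is still a graded subalgebra containing degrees $|i|\geq 2$; (2) invoke generation in level $1$ (\Cref{generationlevel1}) to know the grading on each simple summand has depth $\geq 1$, and use level $\geq 3$ to get depth $\geq 3$ on at least one, but actually I expect one needs depth $\geq 3$ on \emph{each} summand contributing to level, which follows from $\fg$ being irreducible enough / from generation in level one forcing each factor to be ``linked''; (3) for a graded complex simple Lie algebra $\fg_\CC = \bigoplus \fg^i$ with top degree $\geq 3$, show $\fg^{\pm 1} \subseteq \fg'_\CC$ by exhibiting each root space in degree $1$ as a bracket $[\fg^{3},\fg^{-2}]$ or $[\fg^{2},\fg^{-1}]$ — the latter is circular, so one must be careful and use only $[\fg^{3},\fg^{-2}]$, $[\fg^{2},\fg^{-3}]$, $[\fg^{4},\fg^{-3}]$, etc., i.e.\ brackets of two pieces each of degree $\geq 2$ in absolute value; (4) conclude $\fg'_\CC \supseteq \fg^{\pm1}_\CC$, hence $\fg'_\CC = \fg_\CC$ by generation in level $1$, hence $\fg' = \fg$ by Galois descent / $\QQ$-structure.

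The main obstacle I anticipate is step (3): it is conceivable that for some graded simple Lie algebra of depth exactly $3$ the degree $1$ part is \emph{not} contained in the sum of brackets of pairs of pieces of absolute degree $\geq 2$ — for instance if $\fg^3$ is one-dimensional and pairs with $\fg^{-2}$ only into a proper subspace of $\fg^1$. Handling this requires the precise Kostant combinatorics (which roots appear in which degree, controlled by the coefficients of the highest root), and possibly using additionally that $\fg^0$ already sits (at least semisimply) in $\fg'$ via $[\fg^{2},\fg^{-2}]$, so that $\fg'_\CC$ is stable under a large part of $\ad(\fg^0)$, and the $\ad(\fg^0_{ss})$-module generated by $[\fg^3,\fg^{-2}]$ inside $\fg^1$ is then forced to be everything because $\fg^1$ is an \emph{irreducible} $\fg^0$-module for simple $\fg$ of this type — that irreducibility (Kostant) is, I expect, the key clean statement that makes the argument go through without a long case analysis. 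I would organize the writeup around that: $\fg^{\pm1}_\CC$ is an irreducible $\fg^0_\CC$-module, $\fg'_\CC$ meets it nontrivially (via brackets of higher pieces, nonzero because of depth $\geq 3$ and Killing non-degeneracy) and is $\ad(\fg^0_\CC)$-stable, hence contains it.
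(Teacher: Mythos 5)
Your overall strategy — pass to $\fg_\CC$, reduce to a simple summand, and show that the Lie subalgebra generated by $\bigoplus_{|i|\geq 2}\fg^i$ already contains $\fg^{\pm 1}$, whence it equals $\fg_\CC$ by generation in level $1$ — is the same as the paper's, and you correctly identify Kostant's analysis of the $\ZZ$-grading as the right tool. However, the ``key clean statement'' you lean on to avoid a case analysis, namely that $\fg^{\pm 1}$ is an \emph{irreducible} $\fg^0$-module, is false in general, and that is precisely the difficulty the paper's proof has to work around. Kostant's irreducibility statement (\cite[Theorem 0.1]{Kostant}) concerns the individual $\ft$-root spaces $\fg_\nu$, not the graded pieces $\fg^i$: one has $\fg^1=\bigoplus_{i\in I}\fg_{\beta_i}$, a direct sum of irreducible $\fm=\fg^0$-modules indexed by the simple $\ft$-roots, and $|I|$ equals the number of ``crossed'' nodes of the parabolic, which is $>1$ for essentially every higher-weight period domain (e.g.\ for the two-attractor example in the paper, $D=\mathbf{Sp}(4,\RR)/U(1)^2$, where $\fg^0$ is abelian and $\fg^{-1}$ is two-dimensional, hence reducible). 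So knowing $\fg'_\CC$ meets $\fg^1$ nontrivially and is $\ad\fg^0$-stable does not force $\fg^1\subset\fg'_\CC$, and your anticipated obstacle in step (3) is real and not circumvented by the irreducibility appeal.

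The paper closes this gap with a connectivity argument you don't supply: writing $\fh_\CC$ for the subalgebra generated by $\bigoplus_{|i|\geq 2}\fg^i$, one has $\fh_\CC\cap\fg^1=\bigoplus_{j\in J}\fg_{\beta_j}$ for some $J\subset I$ (since $\fh_\CC$ is $\fm$-stable); the level-$\geq 3$ hypothesis together with generation in level $1$ forces $J\neq\emptyset$ (this is where a bracket like $[\fg^{k},\fg^{1-k}]$ with $k\geq 3$ is used, close to what you propose); and if $J\neq I$ then every $\beta_i$ with $i\in I\setminus J$ would have $[\fg_{\beta_i},\fg_{\beta_j}]=0$ for all $j\in J$, which contradicts Kostant's fact that the highest $\ft$-root $\psi=\sum_{i\in I}n_{\beta_i}\beta_i$ has \emph{all} coefficients $n_{\beta_i}>0$ (\cite[(2.39),(2.44)]{Kostant}). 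In short: the right replacement for irreducibility of $\fg^1$ is the ``connectedness'' encoded in the strict positivity of the highest-root coefficients, and without it your argument stalls exactly where you feared it might. A secondary remark: you only consider double brackets $[\fg^a,\fg^b]$ with $|a|,|b|\geq 2$, but the relevant object is the full subalgebra generated by $\bigoplus_{|i|\geq 2}\fg^i$ (allowing iterated brackets), which is what naturally decomposes as an $\fm$-module and to which the connectivity argument applies.
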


Assuming for the moment 
\Cref{generationlevel1} and \Cref{nosubalgebra}, let us finish the
proof of \Cref{level criterion 1}. Let $Y\subset
S$ be a monodromically typical weakly special subvariety for $\VV$, with
algebraic monodromy group $\HH_Y$. In the following we fix a point
$x \in D_{H_{Y}} \cap \widetilde{\Phi}(\widetilde{S^\an})$ and consider the $\QQ$-Hodge-Lie algebra pair $\fh_Y
\subset \fh$ defined by this point. The typicality condition writes:
\begin{equation} \label{e1}
  \dim \left ( \bigoplus_{i \geq 1} \fh_Y^{-i}\right ) - \dim
\Phi(Y^{\an}) = \dim  \left ( \bigoplus_{i \geq 1} \fh^{-i} \right ) -
\dim \Phi(S^{\an}) \;\;.
\end{equation}
As $Y$ is monodromically typical, it is also horizontally monodromically typical by \Cref{rmk1zp}(c):
\begin{equation} \label{e1'}\dim \fh_Y^{-1} - \dim \Phi(Y^{\an}) = \dim \fh^{-1} - \dim
  \Phi(S^{\an})\;\;.
  \end{equation}
We thus deduce from (\ref{e1}) and (\ref{e1'}) that $$\dim \left ( \bigoplus_{i \geq
    2} \fh_Y^{-i}\right ) = \dim \left ( \bigoplus_{i \geq 2}
  \fh^{-i}\right )\;\;,$$ thus $\fh_Y^{-i} = \fh^{-i}$ as $\fh_Y^{-i}
\subset \fh^{-i}$, for all $i \geq 2$. As $\fh_Y^i =
\overline{\fh_Y^{-i}}$ and $\fh^i = \overline{\fh^{-i}}$ we finally
deduce:
\begin{equation} \label{e2}
  \fh_Y^i = \fh^i \quad \textnormal{for all  } \, |i| \geq 2,
\end{equation}

The assumption that $\fh$ is of level at least~$3$ says that each
simple $\QQ$-factor of $\fh$ is at level at least~$3$. Moreover the
equality~(\ref{e2}) remains true if we replace $\fh$ by any of its
$\QQ$-simple factors and $\fh_Y$ by its intersection with this simple
factor. To deduce that
$\fh_Y=\fh$, we can thus without loss of generality assume that $\fh$ is
$\QQ$-simple.

By \Cref{generationlevel1} it follows that one simple $\RR$-factor
$\fl_\RR$ of $\fh_\RR^{\nc}$ is generated in level~$1$ and of level at
least~$3$. It then follows from \Cref{nosubalgebra} that the intersection
$\fh_{Y, \RR} \cap \fl_\RR$ equals $\fl_\RR$. As both $\fh_Y$ and
$\fh$ are defined over $\QQ$ and $\fh$ is the smallest $\QQ$-algebra
whose $\RR$-extension contains $\fl_\RR$, we conclude that
$\fh_Y=\fh$. Thus $Y= S$, which finishes the
proof of \Cref{level criterion 1}, assuming \Cref{generationlevel1}
and \Cref{nosubalgebra}.
\end{proof}

\begin{proof}[Proof of \Cref{generationlevel1}]

  Passing to a finite \'etale cover of $S$ if necessary, we can and
  will assume without loss of generality that the monodromy of the
  local system $\VV$ is contained in $\HH(\RR)$.
  Let $\VV_{\fh}$ be the $\QQ$VHS associated to the adjoint
  representation of the algebraic monodromy group $\HH$.
  For each $s \in S$, let $\fh_s$ be the fiber at $s$ of 
  $\VV_{\fh}$. This is a semi-simple $\QQ$-Hodge-Lie
  algebra. We write $\fh_{\RR,s}= \fh_{\RR,s}^{\nc} \oplus
  \fh_{\RR,s}^{\textnormal{c}}$ for the decomposition of the
  $\RR$-Hodge-Lie-algebra $\fh_{\RR,s}$ into its non-compact and
  compact part.

  \medskip
  As $\HH$ has no $\QQ$-anisotropic factor, $\fh_{\RR,s}$ is the smallest subalgebra of $\fh_{\RR, s}$ defined
  over $\QQ$ and containing $\fh_{\RR,s}^{\nc}$. Proving that
  $\fh_{\RR,s}^{\nc}$ is generated in level~$1$ thus implies that $\fh_s$
  is generated in level~$1$. Let us now turn to the proof of this last statement.

  \medskip
 Let $\fh'_{\CC, s} \subset \fh_{\CC,s}$ be the complex
 Lie subalgebra generated by $\fh^{-1}_s$ and $\fh_s^1$. It is naturally defined
  over $\R$: $\fh'_{\CC,s} = \fh'_{\RR,s} \otimes_{\RR} \CC$ and
  $\fh'_{\RR,s} \subset \fh_{\RR,s}$ is a real Lie subalgebra.
 We claim that the collection $\left (\fh'_{\RR,s } \right )_{s \in S}$ defines a local
 subsystem of $\VV_{\fh_{\R}}:= \VV_{\fh} \otimes_\QQ
 \RR$. To prove the claim, it is enough to show that the corresponding holomorphic subbundle of
 $\mathcal{V}_{\fh}:= \VV_{\fh_{\CC}} \otimes_\CC \mathcal{O}_S$ is stable under the flat
 connection $\nabla$ defined by $\VV_{\fh_{\CC}}$. But a local
 holomorphic vector field $X$ on $S$ identifies under the period map
 with a local section $u $ of $F^{-1}\mathcal{V}_{\fh}$ over $\Phi(S^{\an})$;
 and, under this identification, the derivation $\nabla_X$ at a point
 $s$ is nothing else than $\ad \, u$, which preserves $\fh'_{\CC,s}$
 at each point according to the very definition of the latter. Hence
 the claim.

 \medskip
It follows that $\fh'_{\RR,s}$ is an $\HH_s(\RR)$-submodule of the Lie
algebra $\fh_{\RR,s}$ under the adjoint action, i.e. an ideal of
$\fh_{\RR,s}$. As $\fh_{\RR,s}$ is semi-simple we obtain a decomposition of real Lie algebras, hence of
$\RR$-Hodge-Lie algebras (see \Cref{remfactor}):
$$ \fh_{\RR, s}= \fh'_{\RR, s} \oplus \fl_s\;\; ;$$
and of the associated flag domains $D_H = D_{H'} \times D_L$. The
subvariety $\widetilde{\Phi}(\widetilde{S^\an}) \subset D_H$ is horizontal,
thus tangent to $D_{H'}$ at every point. Hence there exists $d_L \in
D_{L}$ such that $\widetilde{\Phi}(\widetilde{S^\an})\subset D_{H'} \times
\{d_{L}\}$. It follows that the monodromy $(\Ad \rho)(\pi_1(S,
s))$ of the real local system $\VV_{\fh_{\RR}}$, hence also its Zariski-closure $\HH_\RR$, is contained in
$\HH' \times \MM_L$, where $\MM_L$ is the $\RR$-anisotropic stabilizer of
  $d_L$ in $\LL$. Thus $\HH_\RR= \HH'_\RR \times \MM_L$ ; $\HH'_\RR $
is the non-compact part of $\HH_\RR$, which is thus generated in
degree~$1$; and $\LL = \MM_L$ is the compact
part of $\HH_\RR$, of pure type $(0,0)$. Hence the result.
\end{proof}

\begin{rmk}
\Cref{generationlevel1} should be compared with
\cite[Prop. 3.4]{Robles}. There Robles obtains a weaker result for the
more general situation of an horizontal subvariety $Z$ of $D_H$ not necessarily 
coming from an $\RR$VHS on a quasi-projective base (her result is stated for $(\HH,D_H)= 
(\G, D)$ but the proof adapts immediately to the general case; moreover, as
indicated to us by C.Robles the $\Q$s appearing in \cite[Proposition 3.4]{Robles} have to be replaced by $\RR$s).
In her case the group $\HH'_\RR$ is not
necessarily a factor of $\HH_\RR$. Our stronger conclusion (and easier
proof) comes from the ubiquitous use of Deligne's semisimplicity
theorem.
\end{rmk}

\begin{proof}[Proof of \Cref{nosubalgebra}]

 The proof follows from the results of Kostant in \cite{Kostant}. We
 will use his notation in our setting to help the reader. Thus let us
 write $\fm:= \fg^0$, a Levi factor of the
 proper parabolic subalgebra $\fq := F^0 \fg_\CC$. Let $\ft :=
 \textnormal{cent} \, \fm$ be the center of $\fm$ and let $\fs: = [\fm,
 \fm]$. Thus $\fm = \ft \oplus \fs$. Let $\fr$ be the orthogonal complement for the Killing form of
 $\fm$ in $\fg$ so that $\fg_\CC = \fm \oplus \fr$ and $[\fm, \fr]
 \subset \fr$. A nonzero element $\nu \in \ft^*$ is called a
 $\ft$-root if $\fg_\nu \not = 0$, where
 $$\fg_\nu = \{ z \in \fg_\CC, \quad \ad \, x(z) = \nu(x) z, \; \forall
 \, x \in \ft\}\;\;.$$
 We denote by $R \subset \ft^*$ the set of all $\ft$-roots. Following
 \cite[Theorem 0.1]{Kostant}, the root space $\fg_\nu$ is an
 irreducible $\ad \, \fm$-module for any $\nu \in R$, and any irreducible $\fm$-submodule
 of $\fr$ is of this form. Moreover if $\nu, \mu \in R$ and $\nu + \mu
 \in R$ then $[\fg_\nu, \fg_\mu] = \fg_{\nu+\mu}$, see indeed Theorem 2.3 in \emph{op. cit.}. 

Let $R^+\subset R$ be the set of
 positive $\ft$-roots defined in \cite[(page 139)]{Kostant}. Thus the
 unipotent radical $\fn = F^1\fg_\CC$ of $\fq = F^0 \fg_\CC$ coincides
 with $\bigoplus_{\nu \in R^+} \fg_\nu$. Let $T \in \ft$ be the grading element defining the Hodge graduation
 $\fg_\CC = \bigoplus_{i \in \ZZ} \fg^i$, see \cite[Section
 2.2]{Robles}. Thus $\fg_\nu \subset \fg^{\nu(T)}$ and $\nu \in R^+$
 if and only if $\nu(T) >0$.

 A $\ft$-root in $R^+$ is called simple if it cannot be written as a sum
 of two elements of $R^+$. Let $R_\simple \subset R^+$ denote the
 set of simple roots. In \cite[Theorem  2.7]{Kostant} Kostant proves
 that the elements of
$R_\simple$ form a basis of $\ft^*$. If $R_\simple= \{ \beta_1,
\cdots, \beta_{l} \}$ (where $\dim \ft =
 l$) then moreover the $\fg_{\beta_{i}}$, $i \in I:=\{ 1, \cdots, l\}$, generate
 $\fn$ under bracket. 

 \medskip
 As $\fg_\RR$ is a simple real algebra, it follows from \Cref{complex}
 that $\fg_\CC$ is 
 a simple complex Lie algebra. Since $\fg_\RR$ is assumed to be generated
 in level $1$, it follows that for all $i \in I$, $\fg_{\beta_{i}} 
 \subset \fg^1$ (otherwise $\beta_i$ would not be a simple root); and 
 $\fg^1= \bigoplus_{i \in I} \fg_{\beta_{i}}$.

 Let $\fh_\CC \subset \fg_\CC$ be the complex Lie subalgebra generated
 by $\bigoplus_{i \geq 2} \fg^i$ and $\bigoplus_{i \geq 2} \fg^{-i}$.
 It is naturally an $\fm$-submodule of $\fg_\CC$, hence
 $\fh_\CC \cap \fg^{1}$ decomposes as a direct sum
 $\bigoplus_{j \in J\subset I} \fg_{\beta_{j}}$. 
 As $\fg_\RR$ is generated in degree one and the level $k$ of $\fg$ is
 at least $3$, it follows from \cite[Theorem 2.3]{Kostant} that the subspace $[\fg^{-2},
 \fg^3] \subset \fh_\CC \cap \fg^{1}$ is not $0$, thus $J$ is not empty. 
Notice that, for any $i \in I -J$ and $j \in J$,
 one has $[\fg_{\beta_{i}}, \fg_{\beta_{j}}]=0$. Indeed the Killing
 form is $\ad(\mathfrak{m})$-invariant, and $\mathfrak{h}$ is closed
 under $\ad (\mathfrak{m})$; hence $\mathfrak{h}^{\perp}$ is closed
 under $\ad (\mathfrak{m})$, \emph{a fortiori} under
 $\ad(\mathfrak{t})$, whence $(\mathfrak{h}^\perp)^1= \bigoplus_{i\in
   I - J}\fg_{\beta_i}$. Since $\fg=\fh \oplus \fh^{\perp}$ as a Lie
 algebra, the desired conclusion follows.

 If $J \not= I$ this contradicts the
 fact that the restriction to $\ft$ of the highest root of $\fg_\CC$
 is of the form $\sum_{i\in I} n_{\beta_{i}} \beta_i$, with all
 $n_{\beta_{i}}>0$ for all $i \in I$, see \cite[(2.39) and
 (2.44)]{Kostant}. Thus $I=J$ and $\fh_\CC = \fg_\CC$.
\end{proof}

\begin{rmk}[\Cref{nosubalgebra} does not hold in level two]\label{firsrmkweight2}
Let $(\mathbf{G},D)$ be the Hodge datum parametrising polarized Hodge
structures of weight $2$ with Hodge numbers
$(h^{2,0},h^{1,1}, h^{0,2}= h^{2,0})$ for some $h^{2,0}>0$, $h^{1,1} >0$. Let
$(\mathbf{H},D_H)$ be a sub-Hodge datum of $(\mathbf{G},D)$.  
Recall that $\mathfrak{g}^{-2}$ is non zero, as long as
$h^{2,0}>1$. Then the fact that the canonical inclusion 
\begin{displaymath}
\mathfrak{h}^{-2} \subseteq \mathfrak{g}^{-2}
\end{displaymath}
 is an equality implies that $D_H$ is parametrising Hodge structures
 of weight $2$ with Hodge numbers $(h^{2,0}, a, h^{2,0})$
with $0<a < h^{1,1}$. That is, in level two, typical intersections may
(and do) happen, as recalled in \Cref{Picard}. In \Cref{weighttwoprod}, we
prove that the period domain $D_\mathbf{H}$ cannot be a non-trivial
product, indeed in such an example $\HH=\mathbf{SO}(2h^{2,0},a)$ and
$\G=\mathbf{SO}(2h^{2,0},h^{1,1})$. 
\end{rmk}

\subsection{Proof of \Cref{level criterion}} \label{addendum Chris}
\begin{proof}[\unskip\nopunct] 
Without loss of generality (see \Cref{reduction2}) we can assume that $\HH= \G^\der$.
In the proof of \Cref{level criterion 1}, we may then replace the weakly
typical subvarieties by the typical ones, and $\mathfrak{h}_Y$ by
$\mathfrak{g}^\der_Y$. Everything works the same. It is important
that $\HH= \G^\der$ so that \Cref{generationlevel1} applies to
$\fg^\der$. With this formulation, \Cref{level criterion 1} implies
immediately \Cref{level criterion}. 
\end{proof}

\subsection{Proof of \Cref{level criterion 12}}\label{weighttwoprod}

To prove \Cref{level criterion 12}, and therefore \Cref{level22}, it is enough
to prove the following: 
\begin{prop} \label{nosubalgebraprod}
  Suppose $\fg$ is a simple $\QQ$-Hodge-Lie algebra generated in level
  $1$, and of level $2$. If $\fg'  \subset \fg$ is a $\QQ$-Hodge-Lie subalgebra
  satisfying ${\fg'}^{-2} = \fg^{-2}$, then $ \fg'$ is simple. 
\end{prop}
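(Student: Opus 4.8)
The plan is to reduce everything to the ``top piece'' $\fg^{-2}=\overline{\fg^2}$ of the grading and to run the Kostant-theoretic machinery of the proof of \Cref{nosubalgebra}, but now using that $\fg'$ is defined over $\QQ$; this rationality is genuinely needed, since over $\RR$ the statement fails (e.g. $\mathfrak{so}(4,q)\supset\mathfrak{sl}_2\times\mathfrak{so}(q)$ exhibits a non-simple real Hodge--Lie subalgebra realising the prescribed $\fg'^{-2}$). First I would record the structural input. Keeping the notation $\fm=\fg^0$, $\ft=\cent\,\fm$, $R\supset R^+$, the grading element $T$ and $R_{\simple}=\{\beta_1,\dots,\beta_l\}$ from the proof of \Cref{nosubalgebra}, and using that $\fg$ is generated in level $1$ (so every simple $\ft$-root has degree $1$ and $\fg^1=\bigoplus_i\fg_{\beta_i}$) together with the level being exactly $2$, the highest root $\psi$ of $\fg_\CC$ satisfies $\psi(T)=2$; since $\psi|_\ft=\sum_i n_{\beta_i}\beta_i$ with all $n_{\beta_i}\ge 1$ by \cite[(2.39),(2.44)]{Kostant}, one gets $2=\sum_i n_{\beta_i}\ge l$, hence $l\in\{1,2\}$. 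A short check then shows that $\psi|_\ft$ is the \emph{only} $\ft$-root of degree $2$: every $\ft$-root is $\le\psi|_\ft$ in the $R_{\simple}$-ordering, and when $l=2$ the candidates $2\beta_1,2\beta_2$ fail this. Thus $\fg^2=\fg_{\psi|_\ft}$ is a single $\ft$-root space, which by \cite[Theorem 0.1]{Kostant} is an \emph{irreducible $\fm$-module}, and therefore so is $\fg^{-2}=\overline{\fg^2}$.

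Next I would argue by contradiction: suppose $\fg'$ is not simple, say $\fg'=\fg'_1\times\cdots\times\fg'_k\times\mathfrak{z}(\fg')$ with the $\fg'_i$ its $\QQ$-simple ideals. The elementary key point is that \emph{any nonzero $\QQ$-Hodge--Lie subalgebra $\mathfrak{b}\subset\fg$ centralising $\fg^{\pm2}$ must vanish}: such a $\mathfrak{b}$ centralises the ideal of $\fg_\CC$ generated by $\fg^2$, which is all of $\fg_\CC$ when $\fg_\CC$ is simple (and for $\fg$ merely $\QQ$-simple one uses that $\Gal(\overline\QQ/\QQ)$ permutes the $\CC$-simple factors of $\fg_\CC$ transitively, while $\mathfrak{b}$ is Galois-stable and the above ideal has Galois-orbit covering all factors), whence $\mathfrak{b}\subseteq\mathfrak{z}(\fg_\CC)\cap\fg=0$. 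Applying this to $\mathfrak{b}=\mathfrak{z}(\fg')$, which centralises $\fg'\supseteq\fg'^{\pm2}=\fg^{\pm2}$, gives $\mathfrak{z}(\fg')=0$; applying it to $\mathfrak{b}=\fg'_i$ whenever $(\fg'_i)^{\pm2}=0$ (such an ideal commutes with all the other factors, which together carry all of $\fg^{\pm2}$) shows that every $\QQ$-simple factor satisfies $(\fg'_i)^{-2}\ne 0$.

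So it remains to rule out the case where $\fg'$ has at least two factors $\fg'_1,\fg'_2$, each with $(\fg'_j)^{-2}\ne 0$, hence also $V_j:=(\fg'_j)^2=\overline{(\fg'_j)^{-2}}\ne 0$, and with $[V_1,\overline{V_2}]=0$ because $\fg'_1,\fg'_2$ commute. Here I would use the Killing form $B_\fg$ of $\fg$, which pairs $\fg^2$ with $\fg^{-2}$ non-degenerately: for $v\in V_1$, $w\in\overline{V_2}$ and $x\in\fm$,
\begin{displaymath}
0=B_\fg([v,w],x)=-B_\fg(v,[x,w]),
\end{displaymath}
so $V_1$ is $B_\fg$-orthogonal to $[\fm,\overline{V_2}]$; but $[\fm,\overline{V_2}]$ is a nonzero $\fm$-submodule of the irreducible $\fm$-module $\fg^{-2}$, hence equals $\fg^{-2}$, and non-degeneracy of the pairing forces $V_1=0$, a contradiction. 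Therefore $\fg'$ is simple, which is \Cref{nosubalgebraprod}. The step I expect to cost the most care is the passage from the $\CC$-simple to the general $\QQ$-simple case in the two ``centralising'' arguments and in this last pairing argument, i.e. the Galois bookkeeping of which $\CC$-simple factors of $\fg_\CC$ and of $\fg'_\CC$ actually carry a nontrivial grading (handled, as in the end of the proof of \Cref{nosubalgebra}, by isolating one real place $\iota_0$ where the factor is $\RR$-simple, generated in level $1$ and of level $2$, and descending); a secondary technical point is the clean verification, via Kostant's description of $R^+$ and of the highest root, that $\fg^2$ is a single $\ft$-root space, which is exactly what makes $\fg^{-2}$ irreducible and drives the final contradiction.
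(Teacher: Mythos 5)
Your Kostant-theoretic opening (showing that in level $2$ the top piece $\fg^2$ is a single $\ft$-root space, hence $\fg^{-2}$ is irreducible as $\fg^0=\fm$-module) is correct and more explicit than the paper, which simply asserts this irreducibility after reducing to $\fg_\RR$ simple. Your overall split into (i) killing off the centre and any factor with trivial weight-$\pm2$ piece and (ii) ruling out two factors both meeting $\fg^{\pm2}$ also mirrors the paper's strategy. But two steps, one in each half, do not hold as written.

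In step (i), the assertion that a subalgebra $\mathfrak b$ centralising $\fg^{\pm2}$ ``centralises the ideal of $\fg_\CC$ generated by $\fg^2$'' is false. Centralising a set of generators of an ideal only gives centralisation of the \emph{subalgebra} they generate, not of the ideal: for $b\in\mathfrak b$, $x\in\fg$, $y\in\fg^2$ one has $[b,[x,y]]=[[b,x],y]$, which has no reason to vanish. So $\mathfrak b$ only centralises $\fg^{-2}\oplus[\fg^{-2},\fg^2]\oplus\fg^2$, and in particular only \emph{normalises} it. This is exactly where the paper's proof diverges from yours: it takes the normaliser $\fn$ of $\fg^{-2}\oplus[\fg^{-2},\fg^2]\oplus\fg^2$ in $\fg$, uses generation in level $1$ (as in the proof of the previous proposition) to show $\fn^{-1}=0$, deduces that such a factor is concentrated in degree $0$, hence is a compact Lie algebra, and draws a contradiction from that. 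Your route, which tries to land directly inside $\mathfrak z(\fg_\CC)=0$, skips the entire normaliser analysis, and the shortcut you use to justify it does not hold.

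In step (ii), the sentence ``$[\fm,\overline{V_2}]$ is a nonzero $\fm$-submodule of $\fg^{-2}$'' is not automatic: $[\fm,W]$ for a subspace $W$ which is not itself $\fm$-stable need not be $\fm$-stable (one has $[x,[y,w]]=[[x,y],w]+[y,[x,w]]$, and the second term is controlled only if $[x,w]$ already lies in $W$). What is true, and what the paper establishes, is that $\overline{V_j}=(\fg'_j)^{-2}$ and $V_j=(\fg'_j)^{2}$ are $\fg^0$-stable: the action of $\fg^0$ on $\fg^{\pm2}$ factors through $[\fg^{-2},\fg^2]$ (the complementary ideal $\fu$ is exactly the kernel of $\fg^0\to\mathfrak{gl}(\fg^{-2})$), and $[\fg^{-2},\fg^2]\subset\fg'^{\,0}$ preserves each $(\fg'_j)^{\pm2}$ since $\fg'_j$ is an ideal of $\fg'$. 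But once you know $\overline{V_1},\overline{V_2}$ are nonzero $\fg^0$-submodules with $\fg^{-2}=\overline{V_1}\oplus\overline{V_2}$, irreducibility of $\fg^{-2}$ already gives the contradiction outright; the Killing-form computation becomes superfluous, and without that $\fg^0$-stability it does not close the argument. So the missing ingredient in both halves of your proof is precisely the one the paper supplies through the decomposition $\fg^0=[\fg^{-2},\fg^2]\times\fu$ and, for step (i), the normaliser-plus-level-$1$-generation argument.
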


\begin{proof}
Arguing as in the previous section, we can suppose that $\G$ is
$\R$-simple. This implies that $\fg^{-2}$ is a simple (non-zero)
$\fg^0$-module. Notice also that $\fg^0$ is a product $[\fg^{-2},
\fg^2] \times \fu$ (where $\fu$ is possibly trivial). This implies
that $\fg^{-2}$ is of the form $R \otimes S$, in the case where $\fu$
is non trivial, or simply $R$ if $\fu$ is trivial, where $R$ is a simple $[\fg^{-2},
\fg^2]$-module and $S$ a simple $\fu$-module.

\medskip
Let $\fg' \subset \fg$ be a Hodge-Lie subalgebra such
that ${\fg'}^{-2}= \fg^{-2}$. We want to show that $\fg'$ is
simple. Suppose by contradiction it is not simple. Thus
\begin{equation} \label{decompo}
  \fg'= \fg_1' \oplus \fg_2' \;\;,
\end{equation}
for two non-trivial Hodge-Lie subalgebras $\fg_i'$, $i=1,2$.
The decomposition~(\ref{decompo}) gives rise to a decomposition
\begin{equation} \label{decompo2}
  \fg^{\pm 2}= {\fg'}^{\pm 2}= {\fg_1'}^{\pm 2}\oplus {\fg_2'}^{\pm 2} \;\;,
\end{equation}

Suppose first that both ${\fg_1'}^{-2}$ and ${\fg_2'}^{-2}$ are
non-zero. In that case $$[\fg^{-2},
\fg^2] = [{\fg'_1}^{-2}, {\fg'_1}^2] \oplus [{\fg'_2}^{-2},
{\fg'_2}^{-2}]$$ is a non-trivial decomposition, and the decomposition~(\ref{decompo2}) is a
non-trivial decomposition of the $\fg^0$-module $\fg^{-2}$, where
$\fg^0$ acts on ${\fg_i'}^{-2}$, $i=1,2$, through its quotient
$[{\fg'_i}^{-2}, {\fg'_i}^2] \times \fu$. This contradicts
the simplicity of $\fg^{-2}$ as a $\fg^0$-module.

\medskip
Thus, exchanging factors if necessary, we can assume without loss of
generality that ${\fg'_1}^{-2}=
\fg^{-2}$ and ${\fg'_2}^{-2}=0$. It follows in particular that:
\begin{equation} \label{inclusion}
  \fg^2 \oplus [\fg^{-2}, \fg^2] \oplus \fg^{-2} \subset {\fg'_1}_{\C} \;\;.
\end{equation}
As $\fg'_2$ centralizes $\fg'_1$, it centralizes, and hence in particular
normalizes, $\fg^2 \oplus [\fg^{-2}, \fg^2] \oplus \fg^{-2} $. The
normaliser $\fn$ of $\fg^2 \oplus [\fg^{-2}, \fg^2] \oplus \fg^{-2}$
in $\fg$ is therefore a Hodge-Lie subalgebra containing $\fg'_2$. As $\fg^2
\oplus [\fg^{-2}, \fg^2] \oplus \fg^{-2} $ is a $\fg^0$-module, its
normaliser $\fn$ is a $\fg^0$-module too. Thus $\fn^{-1}$ decomposes into a sum of
irreducible $\fg^0$-modules. As already explained in \Cref{firstsectionontyp},
for any non-trivial irreducible $\fg^0$-submodule $E \subset \fg^{-1}$, 
we have that $[E, \fg^2]$ is a non-zero subspace of $\fg^{1}$. In particular
such an $E$ cannot normalize $\fg^2
\oplus [\fg^{-2}, \fg^2] \oplus \fg^{-2}$. 

Therefore we have proven that
$\fn^{-1}=0$, which implies
${\fg'_{2}}^{-1}=0$. Finally $\fg'_{2} = {\fg'_{2}}^0$ is a compact Lie
algebra, establishing the contradiction we were aiming for.

\end{proof}

\section{Proof of \Cref{corol}} \label{proof corol}

\begin{proof}[\unskip\nopunct]
Let $\VV$ be, as in \Cref{corol}, of level at least 3. Using \Cref{reduction} we can without
loss of generality (in particular without changing the level) assume that $\HH= \G^\der$. It then follows from
\Cref{level criterion} (proven as \Cref{level criterion 1}) that $\HL(S, \VV)= \HL(S,
\VV)_{\atyp}$. Let $Z\subset S$ be an irreducible component of the
Zariski-closure of $\HL(S, \VV)_\fpos= \HL(S,
\VV)_{\fpos, \atyp}$. Let us apply \Cref{geometricZP} to $Z$. If we
are in case~(b) of \Cref{geometricZP}, then necessarily $Z= S$, as $S$ does not admit any
{\em strict} typical special subvariety of positive dimension. With the notations of the proof of
\Cref{geometricZP}, we thus obtain a decomposition $\G^\ad= \MM \times \LL$, where $\LL$ is such that $D_L$ is not a
point, and $\MM$ is the algebraic monodromy group of special
subvarieties $Y_i$, $i\in \NN$, contained in $\HL(S, \VV)_\fpos$. But
then the projection of $\Phi({Y_i}^\an)$ on $\Gamma_\LL \backslash
D_L$ is a point: contradiction to the fact that $Y_i$ is factorwise of
positive dimension. Thus we are necessarily in case~(a) of 
\Cref{geometricZP}: $Z$ is a maximal strict (atypical) special
subvariety of $S$ for $\VV$. Thus $\HL(S, \VV)_\fpos$ is algebraic. Finally, If $\G^{\ad}$ is simple, then  
\begin{displaymath}
\HL(S, \VV^\otimes)_\fpos=\HL(S, \VV^\otimes)_\pos=\HL(S, \VV^\otimes)_{\pos, \atyp}
\end{displaymath}
 is algebraic in $S$. \Cref{corol} is therefore proven.
\end{proof}

The proof of \Cref{level2} is essentially the same, replacing \Cref{level criterion} by \Cref{level criterion 12}.
\begin{proof}[Proof of \Cref{level2}]
Assume that $\VV$ is of level $2$ and that $\G^{\ad}$ is simple. 
Thanks to \Cref{level criterion 12}, 
and we obtain that $\HL(S, \VV^\otimes)_{\pos, \atyp}$ is algebraic, as claimed in \Cref{level2}.
\end{proof}

\section{Applications in higher level}\label{newzpandtyp}
In this section we combine the Geometric Zilber--Pink conjecture
(\Cref{geometricZP}) with the fact that, in level $>1$, the typical
Hodge locus is constrained (as established in
\Cref{criterionsection}).  
In particular we prove the results announced in
\Cref{mainsectionintro} and then the applications presented in
\Cref{complapp}.

\subsection{Proof of \Cref{hypersurface} and \Cref{hypersurface2}}
 Let $n \geq 2$ and $d\geq 3$. Let $\PP^{N(n, d)}_\CC$ be the projective space parametrising the hypersurfaces $X$ of
$\PP^{n+1}_\CC$ of degree $d$, with 
\begin{displaymath}
N(n, d)=\binom{n+d+1}{d}-1.
\end{displaymath}

 Let $U_{n, d} \subset \PP^{N(n, d)}_\CC$ be
the Zariski-open subset parametrising the smooth
hypersurfaces $X$ (its complement, the so called \emph{discriminant locus}, is irreducible and of codimension one). 
Let $\G_{n,d}$ be the group of automorphisms of $H^n(X,
  \QQ)_\prim$ preserving the cup-product. When $n$ is odd the primitive 
  cohomology is the same as the cohomology. When $n$ is even
  it is the orthogonal complement of $h^{n/2}$, where $h$ is some fixed
  hyperplane class. Thus $\G_{n,d}$ is either a symplectic or an orthogonal
  group depending on the parity of $n$, and it is a simple
  $\QQ$-algebraic group. 
 One knows that the
  monodromy group $\HH$ of $\VV$ coincides with the simple group
  $\G_{n,d}$, see indeed the remark below. 
  As $\HH \subset \G^\ad \subset
  \G_{n,d}$ we deduce $\G^\ad =\G_{n,d}$, hence $\G^\ad$ is simple.
  
If $n=3$ and $d \geq 5$; $n=4$ and $d\geq 6$; $n=5,6,8$ and $d \geq
4$; and $n=7$ or $\geq 9$ and $d \geq 3$, one checks that the level of
$\G^\ad = \G_{n, d}$ is at least $3$, giving \Cref{hypersurface}. This
follows from  Griffiths' residue theory \cite{zbMATH03341218}, and the
computation of the Hodge diamond of $H^n(X,\Z)_{\prim}$. See in particular \cite[Lecture 4]{zbMATH00719227}, and also
\cite{zbMATH04042054}, for similar computations.  
For example the level is $n$, as soon as $d\geq n+2$, and the level is
at least three also if $n=3$, and $d=5$. If $n=2$, one sees that the
level is $2$, as soon as $d\geq 5$ (it is a well known fact that the
level is one if $d=4$).

The results then follow from \Cref{corol} and \Cref{level2}, proved in \Cref{proof corol}.

\begin{rmk}
Beauville \cite[Theorem 2 and Theorem 4]{Beau}, building on the
  work of Ebeling and Janssen, computes exactly the image of the
  monodromy representation mentioned above. In our argument we just
  need to know $\HH$, the Zariski closure of the image of such a
  monodromy representation. The easier fact that $\HH=\G_{n,d}$
  follows from the \emph{Picard-Lefschetz formulas} and it is due to
  Deligne \cite[Proposition 5.3 and Theorem 5.4]{zbMATH03450362}, see
  also \cite[Section 4.4]{zbMATH03713855}.
\end{rmk}

\subsection{Proof of \Cref{shimuralocus}}\label{proofcor2} 
Let $\VV$ be a polarizable $\ZZ$VHS on a smooth connected complex quasi-projective variety
$S$, and $\Gamma \backslash D$ be the associated Mumford--Tate
domain. We first argue assuming that the adjoint group of the generic
Mumford--Tate $\G$ of $\VV$ is simple. 

Assume that $S$ contains a Zariski-dense set of special subvarieties
$Z_n$ which are of Shimura type (but not necessarly with dominant
period map), and of period dimension $>0$. Heading for a contradiction
let us assume that they are atypical
intersections. \Cref{geometricZP} then implies that $S$ is an
atypical intersection, that is that 
\begin{displaymath}
\codim_{\Gamma \backslash D} (\Phi(S^{\an})) < \codim_{\Gamma
  \backslash D} (\Phi(S^{\an}) ) + \codim_{\Gamma \backslash D}
(\Gamma \backslash D) . 
\end{displaymath}
Which is a contradiction. That means that the $Z_n$ are typical
subvarieties for $(S,\VV)$. As explained in \Cref{firstsectionontyp}, to have a typical intersection between
$\Phi(S^{\an})$ and the special closure of $Z_n$, which we denote by
$\Gamma_{\HH_n} \backslash D_{\HH_n}$ we must have the following equality
at the level of holomorphic tangent spaces (at some point $P \in
Z_n$): 
\begin{equation}\label{eqshimuralocus}
T(S) + T(\Gamma_{\HH_n} \backslash D_{\HH_n}) = T (\Gamma \backslash D).
\end{equation}
By definition, however, the left hand side lies in the horizontal
tangent bundle $T_h(\Gamma \backslash D)$, which was introduced at the
beginning of \Cref{newzpsection}. It follows that $\Gamma \backslash
D$ must be a Shimura variety, as explained in (b) of \Cref{rmk1zp}, since we must have 
\begin{displaymath}
T(D)=T_h(D).
\end{displaymath}
Moreover we can observe that $S$ is Hodge generic, and its Hodge
locus is Zariski-dense. \Cref{shimuralocus} is thereby proven.

For the case where $\G^{\ad}$ is not assumed to be simple we argue as
follows, using the full power of the geometric Zilber--Pink. Assume,
as above, that $S$ contains a Zariski-dense set of special
subvarieties $Z_n$ which are of Shimura type, of period dimension
$>0$, and atypical intersections. Then, by \Cref{geometricZP}, we have
a decomposition $\G^{\ad}=\G'\times \G''$, in such a way that the
$Z_n$ (or a covering thereof) have trivial period dimension for
$\Phi''$. Moreover we can assume that there are no further
decompositions of $(\G', D_{G'})$ in such a way that the $Z_n$ have
trivial period dimension for one of such factors. But then $S$ is
atypical for $\Phi'$, which is a contradiction. That means that the
$Z_n$ have to be typical intersection for $\Phi'$, and so, as
explained previously, $\Gamma ' \backslash D'$ is a Shimura variety.

\begin{rmk}[Shimura varieties inside a period domain]\label{rmkkkk}
Let $D$ be the classifying space for Hodge structures of weight two on
a fixed integral lattice, and $\Gamma \backslash D$ be the
associated space of isomorphism classes. To fix the notation, as in \Cref{firsrmkweight2}, let  
\begin{displaymath}
h^{2,0}-h^{1,1}-h^{0,2}
\end{displaymath}
be the Hodge diamond we are parametrising. To be more explicit we
notice here that $\mathbf{G}=\mathbf{SO}(2h^{2,0},h^{1,1})$. 

Assume that $h^{1,1}$ is even, and write:
\begin{displaymath}
h^{2,0}=p>1,\ \ \ \ h^{1,1}=2q>0.
\end{displaymath} 
Shimura-Hodge subvarieties of $D$ appear for example from the trivial map \emph{from complex to real}
\begin{displaymath}
\SU(p,q) \to \SO(2p,2q),
\end{displaymath}
which indeed induces a geodesic immersion (from the Hermitian space
associated to $\SU(p,q)$ to $D$), as one can read in
\cite[Construction 1.4]{carlson} and \cite{Robles}. 
\end{rmk}

\subsection{Proof of \Cref{modularlocus} -- Special correspondences are atypical intersections}\label{proofcor1} 
Let $\VV$ be a polarizable $\ZZ$VHS on a smooth connected complex quasi-projective variety
$S$. Here we describe the geometry of a portion of the Hodge locus of
$(S\times S, \VV \times \VV)$, which we refer to as the \emph{modular
  locus}, as introduced in \Cref{modularlocusss}.  

Let $\Phi: S^{\an}\to \Gamma \backslash D$ be the period map
associated to $\VV$, and, as usual assume that
$(\mathbf{G},D)=(\mathbf{G}_S,D_S)$, that is that $\Phi(S^{\an})$ is
Hodge generic in $\Gamma \backslash D$. A special correspondence is a
$\dim S $-subvariety of $S\times S$ which come from sub-Hodge datum of
$(\mathbf{G}\times \mathbf{G},D\times D)$ of the form $T_g\subset
\Gamma \backslash D\times \Gamma \backslash D$, for some $g$ in the
commensurator of $\Gamma$. That is  
\begin{displaymath}
(\Phi\times \Phi) (W^{\an})=\left((\Phi\times \Phi) (S^{\an}\times S^{\an})\cap T_g\right)^0.
\end{displaymath}
 To prove \Cref{modularlocus}, via \Cref{geometricZP}, we show that
 the above is an atypical (maximal\footnote{Maximality follows
   directly form the fact that $T_g$ is not contained in any strict
   period sub-domains of $\Gamma \backslash D\times \Gamma \backslash
   D$}) intersection in $\Gamma \backslash D \times \Gamma \backslash
 D$, unless $\Phi$ is dominant, which implies that $\Gamma \backslash
 D$ is a Shimura variety (see for example \cite[Lemma
 4.11]{klin}). Notice that 
\begin{displaymath}
2 \dim \Gamma \backslash D - \dim \Phi (S^{\an})= \codim \Phi (W^{\an}) 
\end{displaymath}
is strictly smaller than
\begin{displaymath}
\codim ((\Phi\times \Phi) (S^{\an}\times S^{\an})) + \codim T_g= 2(
\dim \Gamma \backslash D -\dim \Phi (S^{\an})) + \dim \Gamma
\backslash D, 
\end{displaymath}
(where all codimensions are computed in $\Gamma \backslash D \times \Gamma \backslash D$), if and only if
\begin{displaymath}
\dim \Phi (S^{\an})<\dim  \Gamma \backslash D.
\end{displaymath}
That is, if and only if $\Phi$ is not dominant. \Cref{modularlocus} is therefore a simple consequence of \Cref{geometricZP}.

\section{Typical Locus -- All or nothing}\label{typicalsection}
In this section we prove \Cref{typicallocus}, which, for the reader's convenience, is recalled below. It immediately 
implies \Cref{KO1}, providing the elucidation of \Cref{KO} we were looking for.
\begin{theor}\label{ty1}
Let $\VV$ be a polarizable $\ZZ$VHS
on a smooth connected complex quasi-projective variety 
$S$. If the typical Hodge locus $\HL(S,\VV^\otimes)_{\typ} $ is non-empty then
$\HL(S,\VV^\otimes)$ is analytically (hence Zariski) dense in
$S$.
\end{theor}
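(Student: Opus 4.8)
The plan is to exploit the fact that a \emph{typical} special subvariety can be translated around by the group $\G(\QQ)^+$, which is dense in $\G(\RR)^+$, while the proper intersection which witnesses its typicality is stable under small perturbations.

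First I would make the standard reductions: by \Cref{reduction} one may assume $\HH=\G^\der$, so that $\Phi\colon S^{\an}\to\Gamma\backslash D$ has Hodge generic image, and one may assume $\Phi$ proper. By hypothesis there is a strict typical special subvariety $Z\subset S$, with generic Hodge datum $(\G_Z,D_Z)\subsetneq(\G,D)$. Being typical, $Z$ is non-singular for $\VV$, so I can choose $z\in Z$ Hodge generic in $Z$, smooth on $Z$, with $\Phi(S^{\an})$ a complex submanifold near $\Phi(z)$; fix a lift $\tilde z$, put $x_0=\tilde\Phi(\tilde z)\in D_Z$ and $\Sigma=\tilde\Phi(\widetilde{S^{\an}})$. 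For every $q\in\G(\QQ)^+$ the conjugate $(q\G_Zq^{-1},qD_Z)$ is again a \emph{strict} Hodge subdatum of $(\G,D)$, so by \Cref{CDK} every component of $\Phi^{-1}(\Gamma_{q\G_Zq^{-1}}\backslash qD_Z)$ is a strict special subvariety of $S$, hence lies in $\HL(S,\VV^\otimes)$. Now typicality of $Z$ means exactly that $\Sigma$ and $D_Z$ meet properly at $x_0$: $\dim_{x_0}(\Sigma\cap D_Z)=\dim\Phi(S^{\an})+\dim D_Z-\dim D=\dim\Phi(Z^{\an})\geq 0$. Since both germs are smooth at $x_0$, this proper intersection is stable: it persists, non-empty and of the same dimension, when $D_Z$ is replaced by $g\cdot D_Z$ for $g$ close to $1$ in $\G(\RR)^+$.

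The core of the argument would be a dimension count. For a smooth point $p$ of $\Sigma$, consider the real-analytic incidence set $I_p:=\{(p',g)\in(\Sigma^{\sm}\cap U)\times\G(\RR)^+ \;:\; g^{-1}p'\in D_Z\}$ over a small neighbourhood $U\ni p$; as $(p',g)\mapsto g^{-1}p'$ is a submersion onto $D$, $I_p$ is a submanifold of codimension $\codim_D D_Z$, and its projection to $\Sigma$ is a submersion. Let $\Sigma^{\mathrm{tr}}\subset\Sigma^{\sm}$ be the set of points through which some translate $g\cdot D_Z$ passes meeting $\Sigma$ properly; it is open, $\Gamma$-invariant, and non-empty (it contains $x_0$, by the stability above). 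For $p\in\Sigma^{\mathrm{tr}}$, the projection $I_p\to\G(\RR)^+$ has generic fibre of the expected dimension $\dim\Phi(Z^{\an})$, and therefore — and this is precisely where typicality, $\dim\Phi(S^{\an})+\dim D_Z\geq\dim D$, enters — its image, namely $\{g\in\G(\RR)^+: g\cdot D_Z\cap\Sigma\cap U\neq\emptyset\}$, is full-dimensional in $\G(\RR)^+$; hence it has non-empty interior and meets the dense subgroup $\G(\QQ)^+$. Each such rational $q$ produces a strict special subvariety of $S$ meeting every neighbourhood of a point of $S^{\an}$ lying over $p$. Consequently $\HL(S,\VV^\otimes)$ is dense in the non-empty open subset of $S^{\an}$ lying over the image of $\Sigma^{\mathrm{tr}}$ in $\Gamma\backslash D$. (Everything here is uniform in the period dimension of $Z$: when $Z$ has zero period dimension the ``expected dimension'' is $0$, which gives the statement for the typical Hodge locus of zero period dimension.)

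To conclude I would show that $\Sigma^{\mathrm{tr}}$ is dense in $\Sigma$. It is open and $\Gamma$-invariant; and if its complement contained a non-empty open subset $V$ of $\Sigma^{\sm}$, then through every point of $V$ \emph{every} $\G(\RR)^+$-translate of $D_Z$ would be tangent to $\Sigma$, which, using Griffiths transversality and the Hodge genericity of $\Sigma$, forces $\Sigma$ to be contained in a proper sub-datum, contradicting the Hodge genericity of $\Sigma$. Granting this, $\HL(S,\VV^\otimes)$ is analytically (hence Zariski) dense in $S$, as asserted. \emph{This last density statement is the main obstacle}: the bookkeeping makes non-emptiness of generic translates automatic once typicality is granted, but excluding a pervasive tangency of all translates of $D_Z$ along an open piece of $\Phi(S^{\an})$ requires a genuine argument on the infinitesimal geometry of Mumford--Tate subdomains, and this is where the ideas of Colombo--Pirola, Izadi and Chai have to be imported and adapted to the (non-algebraic) period-map setting. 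The reductions to $\HH=\G^\der$ and to $\Phi$ proper, and the control of the locus where $\Phi$ fails to submerse onto its image, are routine but must be carried out with some care.
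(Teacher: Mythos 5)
Your overall strategy — translate $D_Z$ by elements of $\G(\QQ)^+$, use the density of $\G(\QQ)^+$ in $\G(\RR)^+$, and count dimensions of incidence sets to see that a positive-measure set of translates meets $\Sigma$ near any point — is indeed the one the paper follows, and your remark about Colombo--Pirola, Izadi and Chai is exactly on target. You also correctly observe that the argument treats the zero-period-dimensional typical locus uniformly.

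The genuine gap is the one you flag yourself: you need the ``transverse locus'' $\Sigma^{\mathrm{tr}}$ to be \emph{dense}, not merely open and non-empty, and the tangency argument you sketch (that on a complementary open set \emph{every} $G$-translate of $D_Z$ would be tangent to $\Sigma$, forcing $\Sigma$ into a proper sub-datum) is not what the paper does and is not obviously tight: persistent tangency along an open set would indeed be a strong constraint, but turning it into containment in a proper Hodge sub-datum would require its own nontrivial argument. The paper sidesteps this entirely by a real-analyticity trick, which is the idea you are missing. Concretely, the paper packages the problem as a single real-analytic map $f : \tilde{S} \to \mathcal{C}_H = G/N_G(H)$, where $\tilde{S} \subset D\times\mathcal{C}_H$ is the incidence variety $\{(x,H'):x\in\bar S,\ x(\DT)\subset H'\}$ sitting over the preimage $\bar S$ of $\Phi(S^{\an})$. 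Then (i) the locus $B$ where $f$ fails to be a submersion is a closed real-analytic subset of $\tilde{S}$; (ii) once one exhibits a single point of $\tilde{S}$ at which $f$ \emph{is} a submersion, $B$ is a proper closed real-analytic subset, hence nowhere dense; (iii) Chai's topological argument (the footnote in the paper) immediately converts ``$f$ is a submersion off a nowhere-dense set'' and ``$\G(\QQ)_+$ is dense in $G$'' into analytic density of $\HL(S,\VV^\otimes)$ in $S$. Step (ii) is where typicality enters: the paper identifies the submersion condition at $(P,H')$ with the two Lie-theoretic equalities $\fh'^{-k}=\fg^{-k}$ for $k\geq 2$ and $U+\fh'^{-1}=\fg^{-1}$, where $U=T_P(\bar S)_\CC\subset\fg^{-1}$ by Griffiths transversality, and then reads these off from the typicality equation at a smooth Hodge-generic lift $P$ of $\Phi(Z^{\an})$. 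So the key insight you should take away is that the density of the ``good locus'' comes for free from real analyticity once you have openness plus non-emptiness; you do not need — and should not try — to run a separate geometric argument on the putative complement.

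A secondary but worth-noting point: your assertion that the proper intersection at $x_0$ ``persists, non-empty and of the same dimension, when $D_Z$ is replaced by $g\cdot D_Z$ for $g$ close to $1$'' is correct in the complex-analytic category (intersection multiplicities are preserved), but by itself it only gives non-emptiness of the perturbed intersection, not the submersion condition; the two germs $\Sigma$ and $D_Z$ could in principle be tangent along $\Phi(Z^{\an})$, in which case the incidence map is not submersive at $(x_0,1)$ even though the intersection is proper and persists. The Lie-algebra formulation of the paper makes the needed transversality precise via the Hodge decomposition (Griffiths transversality forces $U\subset\fg^{-1}$, after which the dimension count in the typicality equation decouples into the two displayed conditions), and this is cleaner and safer than arguing directly with germs of complex submanifolds.
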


As usual, we let
\begin{displaymath}
 \Phi: S^{\an} \to \Gamma \backslash D  
\end{displaymath}
be the period map associated to $\VV$, where
$(\mathbf{G},D)=(\mathbf{G}_S,D_S)$ is the generic Hodge datum associated to
$(S,\VV)$. The proof of the above result builds on a local computation 
(at some smooth point).

\subsection{Proof of \Cref{ty1}}\label{sectionty1}
Let $\Gamma ' \backslash D'$ be a period subdomain of $\Gamma \backslash D$ such that
\begin{equation}\label{typicaleq}
0 \leq \dim ( (\Phi(S^{\an}) \cap \Gamma ' \backslash D')^0) = \dim
\Phi(S^{\an}) + \dim \Gamma ' \backslash D'- \dim \Gamma  \backslash
D, 
\end{equation}
that is we have one typical intersection $Z=\Phi^{-1}(\Gamma '
\backslash D')^0$ (accordingly to \Cref{atypical}). By definition we also have that $Z$ is not singular for $\VV$.
 Let $\mathbf{H}\subset \mathbf{G}$ be the generic Mumford--Tate of $D'$,
and, as usual, write $H=\mathbf{H}(\R)^+ \subset G =\mathbf{G}(\R)^+$ and $\mathfrak{h}$ for its Lie algebra,
which is an $\R$-Hodge substructure of $\mathfrak{g}=
\operatorname{Lie}(G)$. Finally we denote by $N_G(H)$ the normaliser
of $H$ in $G$. Let  
\begin{displaymath}
\mathcal{C}_H:=G/N_G(H)=\{H'= g Hg^{-1}: g \in G\}
\end{displaymath}
be the set of all subgroups of $G$ that are conjugated to $H$
(under $G$) with its natural structure of real-analytic manifold (a
manifold which, unsurprisingly, appeared also in the proof of
\Cref{geometricZP}, see indeed \Cref{mainpropwithas}). Set  
\begin{displaymath}
\Pi_H:=\{(x,H')\in D \times \mathcal{C}_H: x(\DT)\subset H' \}\subset D \times \mathcal{C}_H,
\end{displaymath}
and let $\pi_1$ (resp. $\pi_2$) be the natural projection to $D$
(resp. to $\mathcal{C}_H$). Notice that $\pi_i$ are real-analytic
$G$-equivariant maps (where $G$ acts diagonally on $\Pi_H$). 

Let $\bar{S}$ be the preimage of $\Phi(S^{\an})$ in $D$, along the
natural projection map $D \to \Gamma \backslash D$, and $\tilde{S}$ be the
preimage of $\overline{S}$ in $\Pi_H$, along $\pi_1$. By restricting
$\pi_2$ we have a real-analytic map 
\begin{displaymath}
f :  \tilde{S} \to \mathcal{C}_H.
\end{displaymath}
By a simple topological argument, as explained for example in
\cite[Proposition 1]{chai}, to prove that $\HL(S,\VV^\otimes)$ is
dense in $S$, it is enough to prove that $f$ is generically a submersion
(that is a submersion outside a nowhere-dense real analytic subset $B$
of $\tilde{S}$).\footnote{Because of its simplicity, for the
  convenience of the reader, we recall here Chai's argument. Let
  $\Omega \subset S^\an$ be an open subset. Since $f$ is open, the
  image of $\pi^{-1}(\Omega)-B$ in $\mathcal{C}_H$ is open, hence
  meets in a dense set the $\mathbf{G}(\Q)_+$-conjugates of $H$, as
  $\mathbf{G}(\Q)_+$ is topologically dense in $G$.} As being submersive
is an open condition (for the real analytic topology), it is enough to
find a smooth point in $\tilde{S}$ at which $f$ is submersive. 

Let us analyse this condition. Let $y = (P,H') \in D \times \mathcal{C}_H$ be a point of $\tilde{S}$.
The real tangent space of $\mathcal{C}_H$ at $f(y)$ is canonically isomorphic to
\begin{displaymath}
\mathfrak{g}/N_G(\operatorname{Lie}(H'))=\mathfrak{g}/\mathfrak{n}_{\mathfrak{g}}(\mathfrak{h}').
\end{displaymath}

The image of $df$ at $y=(\overline{P},H')$ is equal to
\begin{equation}\label{eqequality}
(\mathfrak{m}+T_{P}(\overline{S})_{\R}+\mathfrak{n}_{\mathfrak{g}}(\mathfrak{h}'))/\mathfrak{n}_{\mathfrak{g}}(\mathfrak{h}')
\end{equation}
where $\mathfrak{m}$ is the Lie algebra of $M$ (from the
identification $D=G/M$). Thus $f$ is a submersion at $y=(P,H')$ if and
only if (\ref{eqequality}) is equal to
$\mathfrak{g}/\mathfrak{n}_{\mathfrak{g}}(\mathfrak{h}')$.
Now we work on the \emph{complex} tangent
spaces (by tensoring all our $\R$-Lie algebras with $\C$), and use the
fact that $\mathfrak{h}, \mathfrak{n}_{\mathfrak{g}}(\mathfrak{h}'),$ and $\mathfrak{m}$ are naturally endowed with an $\R$-Hodge
structure. Thus
\begin{equation*}\label{eqequality2}
\mathfrak{g}_{\C}= \mathfrak{g}^{-w} \oplus \dots \oplus
\mathfrak{g}^{-1} \oplus \mathfrak{g} ^{0}\oplus
\mathfrak{g} ^{1} \oplus \dots  \oplus
\mathfrak{g}^{w}, 
\end{equation*}
for some $ w\geq 1$. To ease the notation set
\begin{displaymath}
U:= T_{P}(\overline{S})_\C \subset \mathfrak{g}^{-1}.
\end{displaymath} 
It follows that $f$ is a submersion at $y=(P,H')$ if and
only if the following two
 conditions are satisfied (compare with \cite[Proposition
 2]{chai}):
\begin{equation}\label{eq11typ}
 \mathfrak{n}_{\mathfrak{g}}(\mathfrak{h}')^{-k} = \mathfrak{g}^{-k}
\end{equation}
for any $k=w, \dots, 2$ (recall that $\mathfrak{m}$ is pure of type $(0,0)$, so $\mathfrak{m}^{-k}=0$ in this range), and
\begin{equation}\label{eq22typ}
U+\mathfrak{n}_{\mathfrak{g}}(\mathfrak{h}')^{-1}=\mathfrak{g}^{-1}.
\end{equation}

Let us now exhibit a point $y = (P, H') \in \tilde{S}$ satisfying both
(\ref{eq11typ}) and (\ref{eq22typ}). Choose $P$ one lift in $D$ of a
smooth Hodge-generic point of $\Phi(Z^{\an})$ and $H'= \HH'(\RR)^+$ where $\HH'$ is
the Mumford-Tate group at that point. Then (\ref{typicaleq}) implies the following
decomposition of the holomorphic tangent bundle of $D$ at
$P$:
\begin{equation}\label{123}
U + T_{P}(D')_\C=T_{P}(D)_\C=\mathfrak{g}
^{-w} \oplus \dots \oplus \mathfrak{g} ^{-1}.
\end{equation}
In particular we have $(\mathfrak{h}')^{-k}=\mathfrak{g}^{-k}$, for all $k>1$, which implies \eqref{eq11typ} (since $(\mathfrak{h}')^{-k} \subset \mathfrak{h}'$) and $(\mathfrak{h}')^{-1} + U=\mathfrak{g}^{-1}$, which implies that \eqref{eq22typ} is satisfied at $y$.

\Cref{ty1} is proven. 

\begin{rmk}
If $\VV$ is of level two and $\mathbf{G}^{\ad}$ is simple, then the density established above has to
come either from the typical Hodge locus, or from the atypical Hodge
locus of zero period dimension (as $\HL(S, \VV^\otimes)_{\pos, \atyp}$
is algebraic, as established in \Cref{level2}). That is, if
  $\HL(S,\VV^\otimes)_{\pos, \typ} $ is non-empty, then it is dense in
  $S$. 
  \end{rmk}



\section{On a question of Serre and Gross}\label{examplesection} 
To conclude the paper we are left to prove the results announced in \Cref{Picard} and \Cref{The curve}.
\subsection{Proof of \Cref{typicaldiv}, after Chai}\label{sectiongeneral}
We first recall the main theorem of Chai \cite{chai}, whose proof was of inspiration for the argument of \Cref{typicalsection}.

Let $(\mathbf{G},X)$ be a connected Shimura datum, and $\Gamma$ a
torsion free finite index subgroup of $\mathbf{G}(\Z)$. The quotient
$\Gamma \backslash X$ is a smooth quasi-projective variety
\cite{MR0216035}, let $n$ be its complex dimension, which we assume
to be $>0$. Recall that a \emph{special} subvariety of $\Gamma \backslash X$ is, by definition, a component of
the Hodge locus $\HL(\Gamma \backslash X, \VV^\otimes)$, where $\VV$
denotes any $\ZZ$VHS on $\Gamma \backslash X$ defined by a faithful algebraic
representation of $\G$ (this Hodge locus is independent of the choice
of such a representation).

Let $(\mathbf{H},X_H)$ be a sub-Shimura datum of $(\mathbf{G},X)$, and 
let $S \subset \Gamma \backslash X$ be an irreducible closed subvariety. 
As usual we set $H=\mathbf{H}(\R)^+ \subset G= \G(\R)^+$.
Consider the following subset of 
$\HL(S, \VV^\otimes)$:
\begin{equation}
\HL(S,\mathbf{H}):=\{x \in S: \MT(x) \subset g\mathbf{H}g^{-1}\text{  for some  } g\in \mathbf{G}(\Q)_+ \}.
\end{equation}

\begin{theor}[{\cite{chai}}]\label{chairecall}
There exists a constant $c=c(G,X,H) \in \N$ such that if $S$ has codimension at most $c$ in $\Gamma \backslash X$, 
then $\HL(S,\mathbf{H})$ is (analytically) dense in $S$.
\end{theor}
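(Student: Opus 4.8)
The plan is to run Chai's argument \cite{chai}, of which the proof of \Cref{ty1} in \Cref{sectionty1} is a streamlined special case; the only extra point is to identify where the constant $c$ comes from. We may assume $S$ is Hodge generic in $\Gamma\backslash X$ (otherwise replace $\Gamma\backslash X$ by the smallest special subvariety containing $S$, which only changes $(\mathbf{G},X,\mathbf{H})$ and hence $c$). Keeping the notation of \Cref{sectionty1}, form the real-analytic incidence manifold
\[
\Pi_H:=\{(x,H')\in X\times\mathcal{C}_H\;:\;x(\DT)\subset H'\},\qquad \mathcal{C}_H:=G/N_G(H),
\]
with its two $G$-equivariant projections $\pi_1:\Pi_H\to X$ and $\pi_2:\Pi_H\to\mathcal{C}_H$. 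Let $\overline S\subset X$ be the preimage of $S$, put $\widetilde S:=\pi_1^{-1}(\overline S)$, and let $f:=\pi_2|_{\widetilde S}:\widetilde S\to\mathcal{C}_H$. Note $\pi_1$ is surjective: every $x\in X$ is $G(\RR)^+$-conjugate to a fixed base point $x_0\in X_H$, hence factors through a $G$-conjugate of $H$, and $\pi_1^{-1}(x)$ is the real-analytic manifold of such conjugates. By the topological lemma \cite[Proposition 1]{chai} together with the density of $\mathbf{G}(\QQ)_+$ in $G$, it suffices to produce, over any prescribed nonempty open subset of $S$, one smooth point $y=(P,H')\in\widetilde S$ at which $f$ is a submersion (submersivity being an open condition).

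\textit{Step 1 — the tangent criterion.} Exactly as in \Cref{sectionty1}, the image of $df$ at $y=(P,H')$ equals $(\fm+T_P(\overline S)_\RR+\fh')/\fh'$. Complexifying and using the Hodge decomposition $\fg_\CC=\fg^{-1}\oplus\fg^{0}\oplus\fg^{1}$ — here we use crucially that $(\mathbf{G},X)$ is a \emph{Shimura} datum, so the tautological variation on $\Gamma\backslash X$ has level one and $\fg^{-k}=0$ for $k\geq 2$ — the higher-weight conditions \eqref{eq11typ} become vacuous and submersivity at $y$ reduces to the single condition \eqref{eq22typ}, namely
\[
U+\fh'^{-1}=\fg^{-1},\qquad U:=T_P(\overline S)^{1,0}_\CC\subseteq\fg^{-1}\ \ (\dim_\CC\fg^{-1}=n).
\]

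\textit{Step 2 — the constant.} Fix a smooth point of $S$ and a lift $P\in X$; at $x_0$ the stabiliser $K_{x_0}\subset G$ has complexified Lie algebra $\fg^{0}$, and $\mathrm{Ad}(K_{x_0})$ permutes the conjugates of $H$ through $x_0$, so the valid subspaces $\fh'^{-1}\subseteq\fg^{-1}$ include the whole $\mathrm{Ad}(K_{x_0})$-orbit of $\fh^{-1}$; transporting by the element carrying $x_0$ to $x_P$, the same holds at $P$. Define $c=c(G,X,H)$ to be the largest integer such that \emph{every} subspace $U\subseteq\fg^{-1}$ of codimension $\leq c$ admits a valid $H'$ with $U+\fh'^{-1}=\fg^{-1}$. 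If $\codim_{\Gamma\backslash X}S\leq c$, then $\dim_\CC U=\dim_\CC S\geq n-c$, so by definition of $c$ there is a valid $H'$ with $U+\fh'^{-1}=\fg^{-1}$; since $\widetilde S$ is a real-analytic manifold and $P$ is a general point, $(P,H')$ is a smooth point of $\widetilde S$, and by Step 1 $f$ is submersive there. This proves \Cref{chairecall}.

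The main obstacle is to show $c\geq 1$ (and to compute it): this is a statement purely about the pair of Hodge–Lie algebras $(\fg,\fh)$, namely that the $\mathrm{Ad}(K_{x_0})$-orbit of $\fh^{-1}$ is ``spread out'' enough in $\fg^{-1}$ that a subspace of small positive codimension always has a member meeting it transversally. When $\dim_\CC X_H=1$ — the case feeding \Cref{typicaldiv} — one has $\dim_\CC\fh'^{-1}=1$, so $c=1$ is exactly the assertion that the lines $\fh'^{-1}$ span $\fg^{-1}$; their span is a nonzero $\mathrm{Ad}(K_{x_0})$-stable, hence $\fg^{0}$-stable, subspace of the $\fg^{0}$-module $\fg^{-1}$, which is irreducible when $\mathbf{G}$ is absolutely simple, whence the span is all of $\fg^{-1}$. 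For $\dim_\CC X_H>1$ the transversality at codimension $c$ is more delicate and is what pins down the exact value of $c$. A secondary subtlety — where the ``$+\,\epsilon$'' over Chai in \Cref{typicaldiv} enters — is that the tangent space $U=T_P(\overline S)^{1,0}_\CC$ of an arbitrary subvariety $S$ need not be in general position, and the hypotheses $N_G(H)=H$ and absolute simplicity of $\mathbf{G}$ are used to exclude the degenerate configurations of $U$.
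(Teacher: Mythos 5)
The paper does not give a proof of \Cref{chairecall}: it is quoted verbatim from Chai \cite{chai}, and only \Cref{lemmachai} (the lower bound $c\geq 1$ under the extra hypotheses) is proved. Your reconstruction of Chai's argument is correct and is the same incidence-variety/submersivity template that the paper runs, in its generalised arbitrary-level form, to prove \Cref{ty1} in \Cref{sectionty1}. In particular you correctly observe that in the Shimura (level-one) case the conditions \eqref{eq11typ} are vacuous and submersivity of $f$ at $(P,H')$ collapses to the single requirement $U+\fh'^{-1}=\fg^{-1}$, and your definition of $c$ in Step~2 agrees with Chai's definition as it is recalled in the proof of \Cref{lemmachai}.

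Two small points worth tightening. First, when you declare $c$ to be ``the largest integer such that \ldots'', you should note that the set of admissible integers is nonempty ($c=0$ works trivially) and bounded ($c\leq\dim_\CC\fh^{-1}=\dim_\CC X_H$, since $U+\fh'^{-1}$ can never fill $\fg^{-1}$ once $\codim U>\dim\fh'^{-1}$), so the maximum exists; with that, the existential statement of \Cref{chairecall} follows from Step~1 alone, and the genuine content is the quantitative evaluation of $c$, as you observe. Second, your closing paragraph slightly misattributes the role of the hypotheses in \Cref{typicaldiv}: $\mathbf{G}$ absolutely simple and $N_G(H)=H$ enter in \Cref{lemmachai} to force $c\geq 1$, via irreducibility of $\fg^{-1}$ as a $K_h$-module (so that the $\Ad K_h$-orbit of the line $\fh^{-1}$ spans), not to exclude ``degenerate configurations'' of a particular $U$ --- by construction $c$ is a worst-case bound over all $U$ of the given codimension, so no genericity of $U$ is needed. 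Finally, your opening reduction ``we may assume $S$ Hodge generic'' replaces $(\mathbf{G},X)$ and therefore changes the constant $c(G,X,H)$ fixed in the statement; it is harmless here only because you never actually invoke it.
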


To establish \Cref{typicaldiv} we just need to prove the following, which builds on an explicit computation of 
$c=c(G,X,H)$ given by Chai:
\begin{lem}\label{lemmachai}
Assume that $\mathbf{G}$ is absolutely simple, $\Gamma_H \backslash X_H \subset \Gamma \backslash X$ has dimension one, and $N_G(H)=H$.
Then $c(G,X,H)>0$. 
\end{lem}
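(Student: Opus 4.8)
The plan is to re-open the proof of Chai's Theorem~\ref{chairecall} --- which, as in \Cref{typicalsection}, reduces the density of $\HL(S,\mathbf{H})$ to the generic submersivity of the real-analytic map $f=\pi_2|_{\tilde S}\colon\tilde S\to\mathcal{C}_H$ attached to the incidence variety $\Pi_H=\{(x,H')\in D\times\mathcal{C}_H : x(\mathbb S)\subset H'\}$ --- and to read off from it an explicit lower bound for $c(G,X,H)$. Concretely, with $\pi_1\colon\Pi_H\to D$ and $\pi_2\colon\Pi_H\to\mathcal{C}_H=G/N_G(H)$ the two ($G$-equivariant) projections, $\bar S\subset D$ the preimage of $S$, and $\tilde S=\pi_1^{-1}(\bar S)$, the topological argument recalled in the footnote of \Cref{sectionty1} shows it is enough to produce one point $y=(P,H')\in\tilde S$, with $P$ a lift of a smooth Hodge-generic point of $S$, at which $f$ is a submersion, under the sole assumption $\operatorname{codim}_{\Gamma\backslash X}S=1$. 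Note that $\pi_1$ is surjective (its image is a nonempty $G$-stable subset of the single $G$-orbit $D$), so the fibre $\mathcal{F}_P=\{H'\in\mathcal{C}_H:P(\mathbb S)\subset H'\}$ is nonempty for every $P$, and $\tilde S$ surjects onto $\bar S$.

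I would then run the local computation of the proof of \Cref{ty1} verbatim. Because $(\mathbf{G},X)$ is a Shimura datum the adjoint Hodge structure on $\mathfrak g$ has level one, so $\mathfrak g_{\mathbb C}=\mathfrak g^{-1}\oplus\mathfrak g^{0}\oplus\mathfrak g^{1}$ and submersivity of $f$ at $y=(P,H')$ is equivalent to the single equality
\[
T_P(\bar S)_{\mathbb C}+\mathfrak h'^{-1}=\mathfrak g^{-1},
\]
the grading on $\mathfrak h'=\operatorname{Lie}(H')$ being the one induced by $\operatorname{Ad}\circ P$. Here $\mathfrak g^{-1}=\mathfrak p^{+}=T_PD$ has complex dimension $n:=\dim_{\C}\Gamma\backslash X$, the space $T_P(\bar S)_{\mathbb C}$ has dimension $n-1$ (the codimension-one hypothesis), and --- since $X_H$ is one-dimensional and homogeneous --- $\mathfrak h'^{-1}=T_PD_{H'}$ is a \emph{line} for every $H'\in\mathcal{F}_P$. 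So the displayed equality just says that the line $\mathfrak h'^{-1}$ is not contained in the hyperplane $T_P(\bar S)_{\mathbb C}$, and everything comes down to exhibiting one such $H'$.

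This last point is where absolute simplicity is used, and it is the only genuine input. Fixing $P$ over a Hodge-generic point of $S$ with stabiliser $M_P=Z_G(P(\mathbb S))$, conjugation by $M_P$ preserves $\mathcal{F}_P$ and commutes with $\operatorname{Ad}\circ P$, hence permutes the lines $\{\mathfrak h'^{-1}:H'\in\mathcal{F}_P\}$; their complex span $V\subset\mathfrak g^{-1}$ is therefore a nonzero $M_P$-submodule of $\mathfrak g^{-1}$. Since $\mathbf{G}$ is absolutely simple, $D$ is an \emph{irreducible} Hermitian symmetric domain, so the isotropy representation of $M_P$ on $\mathfrak p^{+}$ is irreducible; hence $V=\mathfrak g^{-1}$, and the lines $\mathfrak h'^{-1}$ cannot all lie in the hyperplane $T_P(\bar S)_{\mathbb C}$. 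Choosing $H'\in\mathcal{F}_P$ with $\mathfrak h'^{-1}\not\subset T_P(\bar S)_{\mathbb C}$ makes $f$ submersive at $(P,H')$, so $\HL(S,\mathbf{H})$ is analytically dense in $S$ whenever $\operatorname{codim}_{\Gamma\backslash X}S\le 1$; matching this against \Cref{chairecall} (and the explicit shape of Chai's $c(G,X,H)$, in which $\mathcal{C}_H=G/N_G(H)$ enters --- this is where the normalisation $N_G(H)=H$ is used) yields $c(G,X,H)\ge 1>0$. I expect the only real subtlety to be the careful identification of the present computation with Chai's explicit constant; the irreducibility of $\mathfrak p^{+}$ --- which is precisely what fails when $\mathbf{G}$ is merely $\QQ$-simple, e.g.\ a non-trivial restriction of scalars --- and the fact that a one-dimensional sub-datum imposes only a hyperplane condition are the two conceptual ingredients, and both are immediate.
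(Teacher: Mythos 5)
Your proposal is correct, and the essential mathematical step --- that the $\C$-span of $\{\Ad k(\mathfrak h^{-1}) : k\in K_h\}$ is a nonzero $K_h$-submodule of $\mathfrak g^{-1}$, hence all of $\mathfrak g^{-1}$ by irreducibility of the isotropy representation (this is where absolute simplicity enters), so that a \emph{one-dimensional} $\mathfrak h^{-1}$ cannot have all its conjugates trapped in a fixed hyperplane --- is exactly the one the paper uses. The difference is in how you reach this step. The paper simply quotes Chai's explicit combinatorial characterization of $c(G,h,H)$ as the largest integer $c$ such that every subspace $W\subset\mathfrak g^{-1}$ of codimension $\le c$ satisfies $\mathfrak g^{-1}=W+\Ad k(\mathfrak h^{-1})$ for some $k\in K_h$, and feeds the irreducibility argument directly into that description; you instead re-run the submersion computation of \Cref{sectionty1} specialized to a Shimura datum (so the level is $1$, $T_PD=\mathfrak p^+=\mathfrak g^{-1}$, and only condition~(\ref{eq22typ}) survives) and re-derive the criterion. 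This is a legitimate alternative but makes your closing sentence --- ``matching this against Chai's explicit $c(G,X,H)$'' --- slightly loose: what you prove directly is density for $\operatorname{codim} S\le 1$, i.e.\ the \emph{conclusion} of \Cref{chairecall} for $c=1$, and to turn that into the inequality $c(G,X,H)\ge 1$ one still needs Chai's combinatorial definition of $c$ (the one the paper cites), not just his theorem. Once that identification is spelled out, the two proofs coincide; your $M_P$ is the paper's $K_h$, and the irreducibility of $\mathfrak p^+$ under it is the standard fact about irreducible Hermitian symmetric domains that both arguments invoke.
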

\begin{proof}
Let $h: \DT \to H$ be a Hodge generic point of $X_H$. Let $K_h$ be the centraliser of $h$ in $G$.
And recall that, as explained in \Cref{firstsectionontyp}, $\fg^{-1}$ is an irreducible $K_h\otimes \C$-module.

Chai \cite[beginning of page 409]{chai} proves\footnote{That is to say that, in the notation of Definition 2.2 in \emph{op. cit.},
$c(G,h,H)=d(K_h, \fg^{-1},\mathfrak{h}^{-1})$.}
 that $c(G,h,H)$ is the largest non-negative integer 
such that for every $\C$-vector subspace $W$ of $\fg^{-1}$ of codimension at most $c(G,h,H)$,
there exists an element $k \in K_h$ with
\begin{displaymath}
\fg^{-1}= W+ \Ad k (\mathfrak{h}^{-1})\;\;.
\end{displaymath}
Notice that the sub-$\C$-vector space of $\fg^{-1}$ generated by
$\bigcup_{k \in K_h} \Ad k(\mathfrak{h}^{-1})$
is stable under the action of $K_h$, and therefore, by the irreducibility of $\fg^{-1}$ as $K_h$-module, it is equal to $\fg^{-1}$. 
As $\mathfrak{h}^{-1} $ is one dimensional, there must exist a $k_0 \in K_h$ for which 
\begin{displaymath}
\Ad k_0 (\mathfrak{h}^{-1}) \nsubseteq W.
\end{displaymath}
That is, if $W$ has codimension one in $\fg^{-1}$, then $\fg^{-1}=
W+\Ad k_0(\mathfrak{h}^{-1})$.
Which is saying that $c(G,h,H) \geq 1$, as desired.
\end{proof}

Combining the above lemma and \Cref{chairecall} we obtain the following, which implies \Cref{typicaldiv}:
\begin{theor}[Chai + $\epsilon$]\label{thmfinalechai}
Assume that $\mathbf{G}$ is absolutely simple, $X_H \subset X$ has dimension one, and that $N_G(H)=H$.
If $S$ has codimension one in $\Gamma \backslash X$, then 
$\HL(S,\mathbf{H})$ is dense in $S$. That is 
\begin{equation}\label{inteq}
\bigcup_{g\in \mathbf{G}(\Q)_+} S \cap T_g (\Gamma_H \backslash X_H)
\end{equation}
is dense in $S$. \end{theor}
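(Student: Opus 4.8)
The statement to prove is \Cref{thmfinalechai}: under the hypotheses that $\mathbf{G}$ is absolutely simple, $X_H \subset X$ is one-dimensional, and $N_G(H)=H$, if $S \subset \Gamma\backslash X$ has codimension one then $\HL(S,\mathbf{H})$ is dense in $S$. Since this is explicitly stated as being obtained by ``combining'' \Cref{chairecall} (Chai's theorem) and \Cref{lemmachai}, the proof is essentially a one-line assembly: \Cref{lemmachai} establishes that the constant $c(G,X,H)$ appearing in Chai's theorem satisfies $c(G,X,H) \geq 1$; \Cref{chairecall} then says that any irreducible closed subvariety $S$ of codimension at most $c(G,X,H)$ has $\HL(S,\mathbf{H})$ analytically dense in $S$. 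A codimension-one $S$ satisfies $\operatorname{codim} S = 1 \leq c(G,X,H)$, so Chai's theorem applies directly and gives density.

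The only remaining point is to identify $\HL(S,\mathbf{H})$ with the set $\bigcup_{g\in \mathbf{G}(\Q)_+} S \cap T_g(\Gamma_H\backslash X_H)$ appearing in \eqref{inteq}, which is a matter of unwinding definitions: a point $x\in S$ lies in $\HL(S,\mathbf{H})$ precisely when $\MT(x)\subset g\mathbf{H}g^{-1}$ for some $g\in\mathbf{G}(\Q)_+$, and this is exactly the condition that the image of $x$ in $\Gamma\backslash X$ lies on the Hecke translate $T_g(\Gamma_H\backslash X_H)$ of the sub-Shimura variety $\Gamma_H\backslash X_H$ (here one uses that $\mathbf{G}(\Q)_+$ is dense in $G$, and that the Hecke orbit of $\Gamma_H\backslash X_H$ realises exactly the subvarieties whose generic Mumford--Tate group is a $\mathbf{G}(\Q)_+$-conjugate of $\mathbf{H}$). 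So the density of $\HL(S,\mathbf{H})$ in $S$ is literally the density of \eqref{inteq}.

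There is no real obstacle left at this stage, since both of the genuinely substantive inputs — Chai's density theorem and the lower bound $c(G,X,H)\geq 1$ coming from the irreducibility of $\fg^{-1}$ as a $K_h$-module together with $\dim\mathfrak{h}^{-1}=1$ — have already been proved in \Cref{chairecall} and \Cref{lemmachai}. If I were writing this out in detail, the one spot requiring care would be checking that the hypotheses of \Cref{lemmachai} are met by the hypotheses of \Cref{thmfinalechai}: namely that $X_H\subset X$ being one-dimensional forces $\Gamma_H\backslash X_H$ to be one-dimensional (immediate, as $\dim \Gamma_H\backslash X_H = \dim X_H$), and that the condition $N_G(H)=H$ is exactly Chai's hypothesis. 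Both are transparent. Thus the proof is:

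\begin{proof}[Proof of \Cref{thmfinalechai}]
By \Cref{lemmachai} the constant $c=c(G,X,H)$ of \Cref{chairecall} satisfies $c\geq 1$. Since $S$ has codimension one in $\Gamma\backslash X$, we have $\operatorname{codim}_{\Gamma\backslash X} S = 1\leq c$, so \Cref{chairecall} applies and $\HL(S,\mathbf{H})$ is analytically dense in $S$. Finally, unwinding the definition of $\HL(S,\mathbf{H})$, a point $x\in S$ lies in $\HL(S,\mathbf{H})$ exactly when $\MT(x)\subset g\mathbf{H}g^{-1}$ for some $g\in\mathbf{G}(\Q)_+$, which is equivalent to saying that $x$ lies on the Hecke translate $T_g(\Gamma_H\backslash X_H)$ for that $g$; hence
\begin{displaymath}
\HL(S,\mathbf{H}) = \bigcup_{g\in \mathbf{G}(\Q)_+} S\cap T_g(\Gamma_H\backslash X_H),
\end{displaymath}
and this set is dense in $S$, as claimed.
\end{proof}
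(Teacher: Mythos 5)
Your proposal is correct and follows exactly the paper's argument: the theorem is obtained by combining \Cref{lemmachai} (which gives $c(G,X,H)\geq 1$) with \Cref{chairecall}, and then unwinding the definition of $\HL(S,\mathbf{H})$ to match the Hecke-translate description in (\ref{inteq}). Nothing is missing.
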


\begin{rmk}
Under the Andr\'{e}--Oort conjecture for Shimura varieties,
\Cref{thmfinalechai} could be upgraded saying that the \emph{typical}
Hodge locus is in fact dense in $S$ (as we will do in the next section in a special case),
as long as $S$ is not a special subvariety of $\Gamma \backslash X$. 
That is to say that the intersections appearing in (\ref{inteq}) are, up to a finite number,
 zero-dimensional and with Mumford-Tate group equal to $g \mathbf{H}g^{-1}$, 
 for some $g\in \mathbf{G}(\Q)_+$.
\end{rmk}

 \subsection{Proof of \Cref{serrequestion} -- Typical and atypical intersections altogether}\label{finalsection}
Let $\mathcal{M}_g$ be the moduli space of curves of genus $g$, and
\begin{displaymath}
j : \mathcal{M}_g \hookrightarrow \mathcal{A}_g
\end{displaymath}
be the Torelli morphism. Recall that the left hand side is irreducible
and has dimension $3g-3$, while the right hand side has dimension $\frac{g(g+1)}{2}$. 
We denote the image of $j$ by
\begin{displaymath}
\mathcal{T}_g^0=j (\mathcal{M}_g)\subset \mathcal{A}_g,
\end{displaymath}
which will be referred to as the \emph{open Torelli locus}, and by
$\mathcal{T}_g$ its Zariski closure (the so called \emph{Torelli
  locus}). To be more precise a $n$-level structure, for some $n>5$, should be fixed. 
  If $g>3$, it is well
known that $\mathcal{T}_g$ is Hodge generic in $\mathcal{A}_g$. 
This can indeed be observed by using the fact that $\mathcal{T}_g$ 
is not special and that the fundamental group of $\mathcal{M}_g$ surjects 
onto the one of $ \mathcal{A}_g$, see for example \cite[Remark 4.5]{moonenoort}.

From now on, we set $g=4$. This is important because it is the only $g$ for which 
\begin{displaymath}
\codim_{\mathcal{A}_g} (\mathcal{T}_g)=1=10-9= \codim_{\mathcal{A}_4} (\mathcal{T}_4).
\end{displaymath}
Recall that $\mathcal{A}_4$ contains a special curve $Y$ whose generic
Mumford-Tate group $\mathbf{H}$ is isogenous to a $\Q$-form of $\Gm \times
(\Sl_2)^3/\C$, as proven by Mumford \cite{zbMATH03271259}. Mumford's construction is 
such that the normaliser of $\mathbf{H}$
in $\mathbf{G}$ is indeed $\mathbf{H}$. \Cref{thmfinalechai}
shows that $\HL(\mathcal{T}_4, \mathbf{H})$ is analytically dense\footnote{What makes \Cref{serrequestion}
  interesting is that $Y$ is a special subvariety of $\cA_4$ which is
  not of PEL type (while every special subvariety of $\cA_2$ and
  $\cA_3$ is necessary of PEL type).} in $\mathcal{T}_4$. This means that $\mathcal{T}_4$ cuts many Hecke
translates $Y_n$ of $Y$. In particular, upon extracting a sequence of $Y_n$, we have
\begin{equation}
Y_n \cap \mathcal{T}_4^0 \neq \emptyset,
\end{equation}
since it is not possible that all intersections happen on $\mathcal{T}_4 - \mathcal{T}_4^0 $.

Since the $Y_n$ are one dimensional, for each $P \in Y_n $, there are only two possibilities:
\begin{itemize}
\item $P$ is a special point, i.e. $\MT(P)$ is a torus (in which case
  it corresponds to a principally polarised abelian 4-fold with CM);  
\item $\MT(P)= \MT(Y_n)$, which is isogenous to some $\Q$-form of $\Gm \times (\Sl_2)^3$.
\end{itemize}
Therefore, to conclude, we have to find a $n$ and a non-special $P
\in Y_n \cap \mathcal{T}_4^0$. Heading for a contradiction, suppose
that, for all $n$, all points of $Y_n \cap \mathcal{T}_4^0$ are
special. By density of $\HL( \mathcal{T}_4, \mathbf{H})$ in $\mathcal{T}_4$, 
this means that $\mathcal{T}_4$ contains a dense
set of special points. Andr\'{e}--Oort now implies that
$\mathcal{T}_4$ is special, which is the contradiction we were looking 
for. This shows the existence of a point of $\mathcal{T}_4^0(\C)$,
corresponding to a curve $C/\C$ of genus $4$ whose Jacobian has
Mumford-Tate group isogenous to a $\Q$-form of the $\C$-group $\Gm \times
(\Sl_2)^3$. To conclude the proof of \Cref{serrequestion} we just
have to observe that $\mathcal{M}_4, \mathcal{A}_4, j$ and all the
$Y_n$ can be defined over $\Qbar\subset \C$ (see for example
\cite{MR791585} or think about the moduli problem they are
describing), and therefore all intersections we considered during the
proof are defined over $\Qbar$. The result follows.

\begin{rmk}\label{rmkao}
The Andr\'{e}--Oort conjecture of $\cA_g$ is a deep theorem whose
proof appears in \cite[Theorem 1.3]{MR3744855}, and builds on the work
of several people. We refer to \cite{MR3821177} for the history of the
proof. We note here that, for $1< g\leq 6$, it is known by combining
the Pila-Zannier strategy with the Ax--Lindemann conjecture and the
Galois bound appearing in \cite{zbMATH06195219} (which is
 easier than the Galois bound needed in $\cA_g$, for $g$
arbitrary). This is explained in \cite[Theorem 5.1]{MR3177266} and in
\cite[Theorem 1.3]{zbMATH06284346}. 
\end{rmk}

\begin{rmk}[Oort's question]\label{oortrmk}
In 
\cite[Section 7]{zbMATH07128855} it is asked whether one of Mumford's
special curves $Y\subset \mathcal{A}_4$ can actually lie in
$\mathcal{T}_4^0$. As a simple application of a result of Toledo
\cite{zbMATH04057672} we show that there are no such $Y$s. In
\emph{op. cit.} it is proven that if a compact geodesic curve
$Z\subset \mathcal{A}_g$ is contained in $\mathcal{T}_4^0$ then it has
curvature $1/l$ with $1< l \leq (g-1)/3$. If $g= 4$ it means that $l$,
which is a priori an integer in the interval $(1,g)$, is both $\leq 1$
and $>1$. Since Mumford's curves are compact, this shows the desired
claim. 
\end{rmk}

\bibliographystyle{abbrv}

\bibliography{biblio.bib}

\Addresses

\end{document}